\documentclass{amsart}
\usepackage{amssymb,amsmath,amscd,xy,graphicx,textcomp}
\newtheorem{theorem}{Theorem}[section]
\newtheorem{lemma}[theorem]{Lemma}
\newtheorem{corollary}[theorem]{Corollary}
\newtheorem{definition}[theorem]{Definition}

\newtheorem{remark}[theorem]{\it Remark}
\newtheorem{example}[theorem]{Example}
\newtheorem{proposition}[theorem]{Proposition}

\xyoption{arrow}

\xyoption{matrix}

\setcounter{tocdepth}{1}


\def\C{\mathbb{C}}
\def\R{\mathbb{R}}
\def\Z{\mathbb{Z}}

\def\tree{\mathcal{T}}

\def\ql{\backslash \! \backslash}

\title[Coordinate rings of moduli and toric fiber products]{Coordinate rings for the moduli stack of $SL_2(\C)$ quasi-parabolic principal bundles on a curve and toric fiber products}
\author{Christopher Manon}
\thanks{This work was supported by the NSF fellowship DMS-0902710}

\begin{document}

\begin{abstract}
We continue the program started in \cite{M1} to understand the combinatorial commutative algebra of the projective coordinate rings of the moduli stack $\mathcal{M}_{C, \vec{p}}(SL_2(\C))$ of quasi-parabolic $SL_2(\C)$ principal bundles on a generic marked projective curve. We find general bounds on the degrees of polynomials needed to present these algebras by studying their toric degenerations. In particular, we show that the square of any effective line bundle on this moduli stack yields a Koszul projective coordinate ring.  This leads us to formalize the properties of the polytopes used in proving our results by constructing a category of polytopes with term-orders.  We show that many of results on the projective coordinate rings of $\mathcal{M}_{C, \vec{p}}(SL_2(\C))$ follow from closure properties of this category with respect to fiber products.
\end{abstract}

\maketitle

\tableofcontents

\smallskip

\section{Introduction}

We wish to understand the structure of the projective coordinate rings of the moduli stack $\mathcal{M}_{C, \vec{p}}(G)$ of quasi-parabolic principal $G-$bundles on a marked projective curve $(C, \vec{p}) \in \mathcal{M}_{g, n},$ where $G$ is a simple complex group and the parabolic structure is given by a Borel subgroup $B \subset G.$  Our interest in these objects stems
both from the classical value of problems in the moduli of bundles, and because the graded components of these algebras can be identified with spaces of conformal blocks.    
Conformal blocks for the conformal field theory defined by a simple complex Lie algebra $\mathfrak{g}$ and a non-negative integer $L$ for marked projective complex curves $(C, \vec{p})\in \bar{\mathcal{M}}_{g, n}$ occupy an interesting position in algebraic geometry and mathematical physics.  When the curve is allowed to vary, they form vector bundles over $\bar{\mathcal{M}}_{g, n}$ which have been the object of interesting recent work, \cite{F}, \cite{AGS}.  When the genus of the curve is set to $0,$  they are the structure spaces for a category of representations of the specialization of a quantum group at a root of unity. As we will mention below, their combinatorics have even made appearances in mathematical biology.  Because of the variety of applications,
we seek to understand structural features of conformal blocks, and relate them to the commutative algebra of $\mathcal{M}_{C, \vec{p}}(G)$.

The moduli stack $\mathcal{M}_{C, \vec{p}}(G)$ can be expressed as the quotient stack of a product of the affine Grassmannian variety $\mathcal{Q}$ with the projective variety $[G/B]^n,$ by an action of an ind-group $L_{C, \vec{p}}$ determined by the points $p_1, \ldots, p_n.$

\begin{equation}
\mathcal{M}_{C, \vec{p}}(G) = L_{C, \vec{p}}(G) \ql [\mathcal{Q}\times G/B^n]\\
\end{equation}

\noindent
The Picard group of $\mathcal{M}_{C, \vec{p}}(G),$ calculated in \cite{LS}, is a product of $n$ copies of the character group of $B$ times a copy of $\Z.$

\begin{equation}
Pic(\mathcal{M}_{C, \vec{p}}(G)) = \mathcal{X}(B)^n \times \Z\\
\end{equation}

\noindent
The cone of line bundles with non-zero global sections is a subcone
of  $\Delta^n \times \Z_{\geq 0} \subset \mathcal{X}(B)^n\times \Z,$ where
$\Delta$ is the Weyl chamber of $G.$    The space of global sections $H^0(\mathcal{M}_{C, \vec{p}}(G), \mathcal{L}(\vec{\lambda}, L))$
for a vector of dominant weights $\vec{\lambda}$ of $G,$ and a non-negative integer
$L$ agrees with the space $V_{C, \vec{p}}(\vec{\lambda}, L)$ of conformal blocks of the rational conformal field theory defined by $Lie(G)$ and $L,$ with weights $\vec{\lambda}$ at the marked points $\vec{p}$ and level $L.$   Let $R_{C, \vec{p}}(\vec{\lambda}, L)$ be the projective coordinate ring of $\mathcal{M}_{C, \vec{p}}(G)$ defined by $\mathcal{L}(\vec{\lambda}, L).$

\begin{equation}
R_{C, \vec{p}}(\vec{\lambda}, L) = \bigoplus_{N = 0}^{\infty} H^0(\mathcal{M}_{C, \vec{p}}(G), \mathcal{L}(\vec{\lambda}, L)^{\otimes N})\\
\end{equation}

The Hilbert function of this algebra outputs the sequence of dimensions of the spaces of conformal blocks associated to
$(N\vec{\lambda}, NL),$ which can be calculated by the Verlinde formula from conformal field theory, see \cite{B}.  These algebras have been
studied before, predominantly in the case when $\vec{p}$ is empty.  In this case, elements of $R_C(1)$ are known as non-abelian theta functions, and the map to projective space on the coarse moduli space defined by $R_C(1)$ is known as the theta-map.  We direct the reader to the article \cite{P} for a survey of what is known about this ring.   For $G = SL_2(\C)$, the ring of non-abelian Theta functions has also recently been shown to be projectively normal by Abe, \cite{A}.  The present paper is in part motivated by an attempt to extend understanding to the parabolic case, when $\vec{p}$ is non-empty.

The algebra $R_{C, \vec{p}}(\vec{\lambda}, L)$ is defined as above only for $(C, \vec{p})$ smooth, however the space 
$V_{C, \vec{p}}(\vec{\lambda}, L)$ makes sense for any stable curve $(C, \vec{p}) \in \bar{\mathcal{M}}_{g, n}.$
 In \cite{M2}, we showed that the direct sum $\bigoplus_{N \geq 0} V_{C, \vec{p}}(N\vec{\lambda}, NL)$ can be given an algebra structure for any stable curve, and over smooth curves this agrees with the algebra structure on $R_{C, \vec{p}}(\vec{\lambda}, L).$
From now on we refer to these fibers $R_{C_{\Gamma}, \vec{p}_{\Gamma}}(\vec{\lambda}, L)$ for a non-smooth curve $(C_{\Gamma}, \vec{p}_{\Gamma})$ with the same notation.  Additionally, these algebras fit together into a flat sheaf of algebras over $\bar{\mathcal{M}}_{g, n}.$
This implies that one can deduce properties of $R_{C, \vec{p}}(\vec{\lambda}, L)$ for generic $(C, \vec{p})$ by studying
the algebra for a particular $(C, \vec{p}) \in \bar{\mathcal{M}}_{g, n}.$  Our strategy is to deduce properties about $R_{C, \vec{p}}(\vec{\lambda}, L)$ by passing to the fiber over non-smooth curves in $\bar{\mathcal{M}}_{g, n},$ where the combinatorics of the factorization rules (see \cite{TUY}, \cite{B}) of conformal blocks can help.

\subsection{Conformal blocks as weighted graphs}

The stack $\bar{\mathcal{M}}_{g, n}$ is stratified by closed substacks indexed by graphs $\Gamma.$  The lowest
components of this stratification are certain closed points indexed by trivalent graphs $\Gamma$ with first Betti number $\beta_1(\Gamma) = g$
and $n$ leaves. The curve $(C_{\Gamma}, \vec{p}_{\Gamma})$ corresponding to a trivalent graph $\Gamma$ is the stable arrangement of marked copies of $\mathbb{P}^1$ with dual graph $\Gamma.$

\begin{figure}[htbp]
\centering
\includegraphics[scale = 0.5]{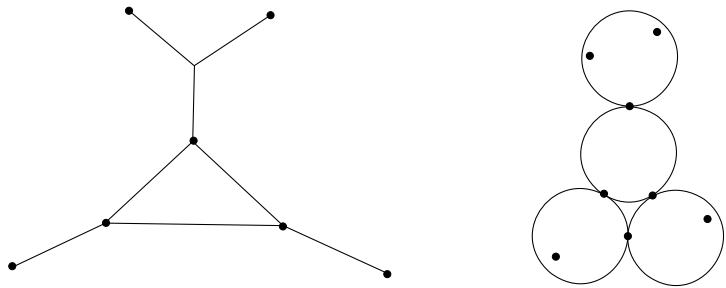}
\caption{The stable curve corresponding to a trivalent graph.}
\label{Fig.5}
\end{figure}

We now restrict our attention to $G = SL_2(\C).$ For this group, dominant weights are non-negative integers $r \in \mathbb{Z}_{\geq 0}.$
In this case, the factorization rules imply that the space of conformal blocks $V_{C_{\Gamma}, \vec{p}_{\Gamma}}(\vec{r}, L)$ over the point corresponding to a graph $\Gamma$ has a distinguished basis given by weightings of the edges of $\Gamma$ by non-negative integers which satisfy a short collection of conditions.  

\begin{definition}\label{polydef}
For $\Gamma$ a trivalent graph with $n$ leaves labeled $1, \ldots, n,$ $\vec{r}$ an $n-$vector of non-negative integers, and $\beta_1(\Gamma) = g,$ we define $P_{\Gamma}(\vec{r}, L) \subset \R^{|Edge(\Gamma)|}$ to be the set of non-negative real weightings of the edges of $\Gamma$ which satisfy the following conditions. 

\begin{enumerate}
\item For any trinode $v \in \Gamma$ with the three incident edge weights $w_1, w_2, w_3$
must satisfy the triangle inequalities: $|w_1 - w_2| \leq w_3 \leq w_1 + w_2.$\\
\item For any trinode as above, $w_1 + w_2 + w_3 \leq 2L.$\\
\item The weight on the edge attached to the $i-$th leaf equals $r_i.$\\
\end{enumerate}
\end{definition}

\begin{figure}[htbp]
\centering
\includegraphics[scale = 0.5]{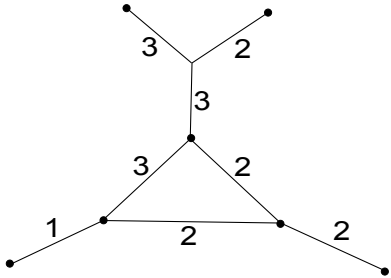}
\caption{A weighting representing a conformal block on a $4-$marked curve of genus $1.$  
The external weights are $1, 3, 2, 2,$ and the level could be anything bigger than $4,$ so this may represent
an element in $V_{C, p_1, p_2, p_3, p_4}(1,3, 2, 2, 4).$}
\label{Fig0}
\end{figure}

We consider this polytope with respect to the lattice of integer points in $\R^{|Edge(\Gamma)|}$ defined by the condition
that the sum $w_1 + w_2 + w_3 \in 2\Z$ for any trinode $v \in \Gamma.$  In addition to indexing a basis of the spaces $V_{C, \vec{p}}(\vec{r}, L),$ the polytope $P_{\Gamma}(\vec{r}, L)$ captures more information about the algebras $R_{C, \vec{p}}(\vec{r}, L),$ in the sense that the edge-wise addition operation on weightings of $\Gamma$ is almost the same as the multiplication operation in these algebras. The following is proved in \cite{StXu} for $C$ genus $0,$ and for general genus by Proposition $1.5$ of \cite{M2}.

\begin{theorem}
Let $\Gamma$ be a trivalent graph with $n$ labeled leaves and first Betti number equal to $g.$
For any curve $(C, \vec{p}) \in \bar{\mathcal{M}}_{g, n}$ the algebra $R_{C, \vec{p}}(\vec{r}, L)$ can be flatly degenerated to the graded semigroup algebra associated to $P_{\Gamma}(\vec{r}, L).$ 
\end{theorem}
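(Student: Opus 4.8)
The plan is to push the problem to the most degenerate pointed curve in the stratum indexed by $\Gamma$, where the factorization rules present $R_{C_\Gamma,\vec p_\Gamma}(\vec r,L)$ as an algebra with a basis indexed by the lattice points of the cone over $P_\Gamma(\vec r,L)$, and then to realize the toric degeneration as the associated graded of a valuation built from the edge combinatorics of $\Gamma$. As recalled in the excerpt, the spaces $V_{C,\vec p}(N\vec r,NL)$ assemble into a flat sheaf of $\Z_{\geq 0}$–graded $\C$–algebras over $\bar{\mathcal M}_{g,n}$ whose fiber at a smooth pointed curve is $R_{C,\vec p}(\vec r,L)$; since admitting a flat degeneration to a fixed target is preserved under specialization inside a flat family, it suffices to prove the statement for the single curve $(C_\Gamma,\vec p_\Gamma)$ lying in the boundary stratum indexed by $\Gamma$. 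For that curve the factorization/sewing rules of \cite{TUY}, \cite{B} give, compatibly in $N$, a vector–space decomposition
\[
V_{C_\Gamma,\vec p_\Gamma}(N\vec r,NL)\;\cong\;\bigoplus_{w}\;\bigotimes_{v}\,V_{\mathbb{P}^1}\!\big(w|_v;\,NL\big),
\]
the sum over weightings $w$ of $Edge(\Gamma)$ with the $i$–th leaf edge weighted $Nr_i$, and the tensor over the trinodes $v$. For $SL_2$ each factor has dimension $\le 1$, with dimension $1$ exactly when $|a-b|\le c\le a+b$, $a+b+c\le 2NL$ and $a+b+c\in 2\Z$, so the $w$–summand is one–dimensional precisely when $w$ is a lattice point of $N\cdot P_\Gamma(\vec r,L)$ in the lattice $\Lambda$ fixed after Definition~\ref{polydef}. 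This yields a homogeneous basis $\{b_w\}$ of $R_{C_\Gamma,\vec p_\Gamma}(\vec r,L)$ indexed by the lattice points of the cone over $P_\Gamma(\vec r,L)$; write $S_\Gamma$ for this monoid.

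Next I would produce a valuation with value semigroup $S_\Gamma$. Fix a total order on $Edge(\Gamma)$, inducing a lexicographic order $\preceq$ on weightings, and for $0\neq f=\sum_w c_w b_w$ homogeneous set $\nu(f)$ to be the $\preceq$–largest $w$ with $c_w\neq 0$. The content of the theorem then reduces to the leading–term identity
\[
b_w\cdot b_{w'}\;=\;\kappa_{w,w'}\,b_{w+w'}\;+\;(\text{a combination of }b_S\text{ with }S\prec w+w'),\qquad \kappa_{w,w'}\neq 0.
\]
This is a local computation at the trinodes: the ``diagonal'' part of the multiplication is, under the decomposition above, the tensor product over $v$ of the multiplications of the three–pointed algebras $\bigoplus_N V_{\mathbb{P}^1}(Na_v,Nb_v,Nc_v;NL)$, and each of those is the semigroup algebra of an interval — concretely $V_{\mathbb{P}^1}(a,b,c;NL)$ is spanned by the unique $SL_2$–invariant in $\mathrm{Sym}^a\C^2\otimes\mathrm{Sym}^b\C^2\otimes\mathrm{Sym}^c\C^2$, a monomial of fixed multidegree in the three bracket functions, and the product of two such invariants is a nonzero multiple of the invariant attached to the summed weights whenever the latter still satisfies the level bound. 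Feeding these nonvanishing local leading terms through the sewing decomposition, with the edge order chosen so that the global structure constant at $w+w'$ is the (nonzero) product of the local ones, gives $\kappa_{w,w'}\neq 0$; the remaining contributions come from the non–diagonal part of the sewing isomorphisms and involve only weightings $S\prec w+w'$. Hence $\gr_\nu R_{C_\Gamma,\vec p_\Gamma}(\vec r,L)$ has basis $\{b_w\}$ with $b_w b_{w'}=b_{w+w'}$, i.e.\ it is $\C[S_\Gamma]$, the graded semigroup algebra of $P_\Gamma(\vec r,L)$.

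Finally, since $R_{C_\Gamma,\vec p_\Gamma}(\vec r,L)$ is finitely generated and $\nu$ is a valuation whose value semigroup is the finitely generated monoid $S_\Gamma$, the Rees algebra of the $\nu$–filtration — after refining $\preceq$ to a $\Z$–valued order in each graded piece, which is harmless — is a finitely generated flat $\C[t]$–algebra $\mathcal R$ with $\mathcal R[t^{-1}]\cong R_{C_\Gamma,\vec p_\Gamma}(\vec r,L)[t^{\pm1}]$ and $\mathcal R/t\mathcal R\cong \C[S_\Gamma]$; flatness over $\C[t]$ is transparent since in each degree $N$ the two algebras have the same dimension, namely $\#\big(N\cdot P_\Gamma(\vec r,L)\cap\Lambda\big)$. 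Combined with the first reduction, this exhibits $R_{C,\vec p}(\vec r,L)$ for an arbitrary curve as the general fiber of a flat family over $\mathbb A^1$ whose special fiber is the graded semigroup algebra of $P_\Gamma(\vec r,L)$. The essential obstacle is the leading–term identity of the middle step: the reduction to a single trinode and the flatness argument are formal, but verifying that multiplying the two distinguished basis vectors genuinely contributes a nonzero multiple of $b_{w+w'}$ and nothing of larger $\preceq$–value requires the explicit form of the sewing maps and the propagation of vacua across the nodes of $C_\Gamma$; alternatively, for $C$ of genus $0$ one realizes $R_{C_\Gamma,\vec p_\Gamma}(\vec r,L)$ as an explicit ring of $SL_2$–invariants and transports a SAGBI basis of it, which is the route taken in \cite{StXu}.
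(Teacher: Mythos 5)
Your proposal follows essentially the same route as the paper: reduce to the graph curve $(C_\Gamma, \vec{p}_\Gamma)$ via the flat sheaf of algebras over $\bar{\mathcal{M}}_{g,n}$, use the distinguished factorization-rule basis and show multiplication is lower-triangular with respect to an order on edge-weightings, then pass to the associated graded algebra and a Rees-algebra family to get the flat degeneration to $\C[P_{\Gamma}(\vec{r}, L)]$. The only difference is one of exposition: the paper delegates the leading-term/triangularity analysis (the step you correctly flag as the essential obstacle) to Proposition 1.5 and the filtration arguments of \cite{M2}, whereas you sketch how it would be verified through the sewing maps and the three-point $SL_2(\C)$ invariants.
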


\begin{proof}
 Both steps, establishing the flat sheaf of algebras over $\bar{\mathcal{M}}_{g, n},$ and defining and analyzing the ordering,
appear in \cite{M2},( see also \cite{A}).   For any curve $(C, \vec{p}),$ the algebra $R_{C, \vec{p}}(\vec{r}, L)$ is in a flat family with the algebra
over the curve $(C_{\Gamma}, \vec{p}_{\Gamma})$ of type $\Gamma$ by Proposition $1.5$ of \cite{M2}.  By analyzing $R_{C_{\Gamma}, \vec{p}_{\Gamma}}(\vec{r}, L)$ with respect to the distinguished basis given by the factorization rules, it can be found that multiplication in this algebra is "lower triangular" with respect to a natural ordering on the weightings $w$ of $\Gamma.$   That is, if $[w_1]$ and $[w_2]$ are the elements of $R_{C_{\Gamma}, \vec{p}_{\Gamma}}(\vec{r}, L)$ corresponding to weightings $w_1, w_2$
then $[w_1]\times [w_2] = [w_1 + w_2] + $ $lower$ $terms.$  Taking the associated graded algebra of  $R_{C_{\Gamma}, \vec{p}_{\Gamma}}(\vec{r}, L)$  with respect to this term order then yields the graded toric algebra $\C[P_{\Gamma}(\vec{r}, L)].$  A standard Reese algebra construction then
yields a flat family over $\C$ with general fiber $R_{C_{\Gamma}, \vec{p}_{\Gamma}}(\vec{r}, L)$ and special fiber $\C[P_{\Gamma}(\vec{r}, L)].$ 
\end{proof}

The conditions satisfied by the weights $w \in P_{\Gamma}(\vec{r}, L)$ around each internal vertex are called
the quantum Clebsch-Gordon conditions.  Weightings of graphs which satisfy these conditions are known by various names in mathematical physics, 
such as Feynman diagrams or spin diagrams, see \cite{Ko}.  Analysis of these polytopes and other convex sets of spin diagrams also appears
in computational biology, specifically in the work of Buczynska and Wiesniewski on phylogenetic algebraic varieties, \cite{BW}, \cite{Bu}.
The polytope $P_{\Gamma}(\vec{r}, L)$ is a cross-section of the cone $\tau(\Gamma)$ corresponding to Buczynska's graphical phylogenetic toric variety introduced in \cite{Bu} to study the Jukes-Cantor statistical model on graphs.  

A good deal of information is preserved by flat degeneration, such as the degree, the Hilbert function of $R_{C, \vec{p}}(\vec{r}, L),$ and commutative algebra features like the Gorenstein property.  In theory, all of these details can be computed from the polytopes $P_{\Gamma}(\vec{r}, L).$  A non-trivial consequence of this observation is that the equivalent polyhedral information for $P_{\Gamma}(\vec{r}, L),$ the volume and the number of lattice points, etc, is independent of the graph $\Gamma.$     It should be noted that the degeneration technique we are using here applies to other groups $G$ as well, except the resulting degenerations are not toric.  Perhaps this issue can be resolved with a better understanding of the underlying combinatorial representation theory.


\subsection{Statement of results}

Depending on the property of $R_{C, \vec{p}}(\vec{r}, L)$ one wishes to study, certain toric degenerations can be more suited to the task than others.
Next we describe the conditions we place on $\Gamma$ with respect to $\vec{r}$ in order to ensure the polytope $P_{\Gamma}(\vec{r}, L)$ is suited to our needs.
We will use three special graph topologies, depicted in Figure \ref{Fig4}. 

\begin{definition}
A trivalent tree $\tree$ is said to be $caterpillar$ if every vertex is connected by an edge to some leaf. 
\end{definition}

Notice that a caterpillar tree has two pairs of leaves which share a common vertex.  We call 
these paired leaves, and we say they are at the head or tail of the caterpillar. 

\begin{definition}
A trivalent graph $\Gamma$ is said to be a caterpillar graph if it is obtainable from a caterpillar tree
by adding a loop on one of the leaves at the head or tail, or by adding a doubled edge at the midpoint
of one of the edges.    
\end{definition}

\begin{definition}
A trivalent graph $\Gamma$ of genus $g$ with $n$ leaves is $tree-like$ if it
is obtained from a trivalent tree with $g + n$ leaves by adding loops on 
$g$ leaves.  
\end{definition}

\begin{figure}[htbp]
\centering
\includegraphics[scale = 0.4]{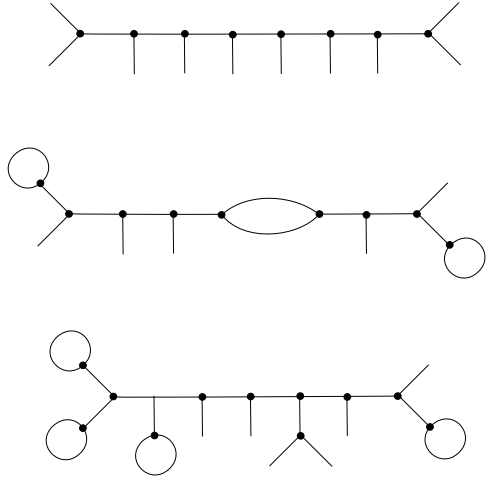}
\caption{From the top, a caterpillar tree, a caterpillar graph, and a tree-like graph.}
\label{Fig4}
\end{figure}

In our analysis of $P_{\Gamma}(\vec{r}, L)$ for $\Gamma$ tree-like, we require that $\vec{r}$ and $\Gamma$ satisfy a condition, which we call compatibility. 

\begin{definition}
For the polytope $P_{\Gamma}(\vec{r}, L)$ we say a vector $\vec{r}$ is "compatible" with a graph $\Gamma$ if any odd $r_i$ is assigned to a leaf-edge which shares a vertex with with the leaf-edge of another odd $r_j.$  
\end{definition}

\begin{figure}[htbp]
\centering
\includegraphics[scale = 0.35]{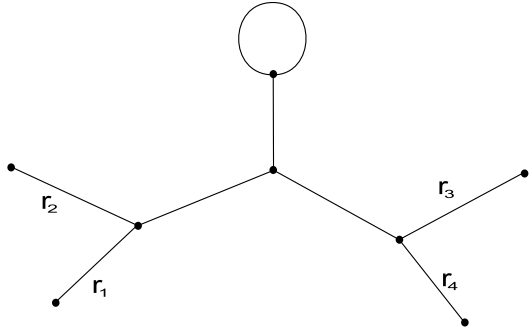}
\caption{$(r_1, r_2, r_3, r_4)$ is compatible with the graph when $r_1 + r_2, r_3 + r_4 \in 2\Z$}
\label{Fig3}
\end{figure}

Recall that for any lattice point in $P_{\Gamma}(\vec{r}, L)$, the weights at any trinode of $\Gamma$ sum to an even number.
It is a simple combinatorial exercise to show that this implies that the number of odd-weighted leaf-edges of $\Gamma$ is always
even.  As a consequence, we obtain that there are only conformal blocks with weights $\vec{r}$ if an even number of the $r_i$ are odd, 
this implies the following.  

\begin{proposition}
Whenever $V_{C, \vec{p}}(\vec{r}, L)$ has dimension $> 0,$ there is a tree-like graph $\Gamma$ which is compatible with $\vec{r}.$
\end{proposition}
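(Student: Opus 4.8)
The plan is to use the hypothesis $\dim V_{C,\vec{p}}(\vec{r},L)>0$ only to extract a parity constraint on $\vec{r}$, and then to produce a compatible tree-like graph by an explicit combinatorial construction. Throughout we may assume $2g-2+n>0$, since otherwise there is no stable curve $(C,\vec p)\in\bar{\mathcal M}_{g,n}$ and there is nothing to prove.

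First I would record the parity fact. As observed in the discussion preceding the statement, a positive-dimensional $V_{C,\vec p}(\vec r,L)$ forces $P_\Gamma(\vec r,L)$ to contain a lattice point $w$ for the trivalent graphs $\Gamma$ appearing in the degeneration theorem above (and hence for any trivalent $\Gamma$ of genus $g$ with $n$ leaves, since the relevant polyhedral data is independent of $\Gamma$); summing the condition $w_1+w_2+w_3\equiv 0 \pmod 2$ over the trinodes of $\Gamma$ counts each internal edge twice and each leaf-edge once, so $\sum_i r_i\equiv 0\pmod 2$. Thus the number of odd $r_i$ is even; write it as $2k$, so $k\le\lfloor n/2\rfloor$. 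An easy check then shows that $2g-2+n>0$ implies $p:=g+n-k\ge g+\lceil n/2\rceil\ge 2$.

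Next I would build the graph. Pair the $2k$ odd-weighted indices arbitrarily into $k$ pairs. For each pair form a \emph{cherry}: a trinode carrying the two pendant leaf-edges of that pair together with one further ``stem'' edge. One is then left with $k$ cherry stems, $n-2k$ pendant leaf-edges carrying the even weights, and $g$ additional pendant edges reserved for loops, i.e. with $p=k+(n-2k)+g$ objects. Join these $p$ objects along a caterpillar backbone: a path $v_1-v_2-\cdots-v_{p-2}$ of trinodes whose $p$ free half-edges each receive one object. (When $p=3$ the backbone is the single trinode $v_1$; when $p=2$ it is empty and the two objects are joined directly by an edge, which handles the degenerate cases $(g,n)\in\{(0,3),(1,1),(0,4),(1,2),(2,0)\}$ that would otherwise be exceptional.) The outcome is a trivalent tree $T$ with $g+n$ leaves and $g+n-2$ trinodes. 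Labelling the $2k$ cherry leaves and the $n-2k$ even-weight leaves by $1,\dots,n$ and adding a loop at each of the remaining $g$ leaves yields a graph $\Gamma$ which, by construction, is tree-like, connected, has $\beta_1(\Gamma)=g$, and has exactly $n$ leaves.

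Finally I would verify compatibility. Each odd-weighted leaf-edge is, by construction, one of the two pendant edges at a cherry trinode, and the other pendant edge there is the leaf-edge of the paired index, which is also odd; so every odd leaf-edge shares a vertex with the leaf-edge of another odd index, which is exactly the definition of compatibility. I expect the only real obstacle to be bookkeeping: confirming that $T$ genuinely is a trivalent tree with the claimed leaf and trinode counts in every case, and that the empty-/single-vertex-backbone conventions cover the small values of $p$ uniformly. There should be no conceptual difficulty beyond this.
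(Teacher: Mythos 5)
Your proposal is correct and follows essentially the same route as the paper: the paper derives the proposition from the observation that a nonzero conformal block forces an even number of odd $r_i$ (via the parity condition at each trinode), and then takes for granted the existence of a tree-like graph pairing the odd-weighted leaves at common trinodes. Your cherry-plus-caterpillar-backbone construction (with the $p=2$ convention for the small cases) simply makes explicit the construction the paper leaves as an exercise, and your bookkeeping of leaves, trinodes, and loops checks out.
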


The compatibility property was critical in the proof of the following theorem, from \cite{M1}.

\begin{theorem}\label{g1}
Let $\tree$ be a trivalent tree, and $L > 1$, and let $\vec{r}$ be compatible with $\tree,$ then $\C[P_{\tree}(\vec{r}, 2L)]$ is generated in degree $1,$ and the binomial ideal of relations $I_{P_{\tree}(\vec{r}, 2L)}$ is generated by polynomials of degree $\leq 3.$
\end{theorem}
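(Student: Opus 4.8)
The plan is to prove both assertions simultaneously by induction on the number $n$ of leaves of $\tree$, exploiting the way $P_{\tree}(\vec{r}, 2L)$ is assembled from small building blocks along internal edges; in algebraic terms one realizes $\C[P_{\tree}(\vec{r}, 2L)]$ as an iterated \emph{toric fiber product}. The right object to induct on is a trivalent tree $\mathcal{S}$ some of whose leaves carry fixed weights forming a pattern compatible with $\mathcal{S}$, possibly with one distinguished leaf left free; to it one associates the multigraded algebra $\bigoplus_{N, k}\C[P_{\mathcal{S}}((N\vec{r}'', k), 2NL)]$, graded by the ordinary degree $N$ and by the free weight $k$. The theorem for $\tree$ is the special case with no free leaf. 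The base of the induction is a tree with one internal vertex: then all edges are leaf-edges, $P_{\tree}(\vec{r}, 2L)$ is a point or empty, and $\C[P_{\tree}(\vec{r}, 2L)]$ is $\C[t]$ or $\C$ — generated in degree one with no relations.

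For the inductive step choose a cherry of $\tree$, i.e.\ an internal vertex $v$ carrying two leaves $i, j$; every trivalent tree with $n \ge 4$ has one. Compatibility forces $r_i \equiv r_j \pmod 2$: if $r_i$ is odd, its only possible odd partner at $v$ is $r_j$. Deleting $v$ and the leaves $i, j$ and promoting $v$'s remaining internal edge to a new leaf $\ell$ produces a trivalent tree $\tree'$ with $n-1$ leaves, whose weight pattern (old weights on the surviving leaves, free weight $k$ on $\ell$) is again compatible, since removing an even-summing cherry cannot unpair an odd leaf. Slicing by the weight $k$ on the edge at $v$ gives
$$P_{\tree}(\vec{r}, 2L) \;=\; \bigcup_{k}\ (\text{cherry block at } v,\ \text{edge weight } k)\ \times\ P_{\tree'}\bigl((\vec{r}'', k), 2L\bigr),$$
and $\C[P_{\tree}(\vec{r}, 2L)]$ is correspondingly the toric fiber product of the cherry-block algebra
$$C_{i,j} \;=\; \C\bigl[\,\{\,(N, k)\in\Z^2_{\ge 0}\ :\ N|r_i - r_j|\le k\le N(r_i+r_j),\ \ N(r_i+r_j)+k\le 4NL,\ \ k\in 2\Z\,\}\,\bigr]$$
with the (inductively controlled) algebra of $\tree'$, glued along the common $(N, k)$-grading; the semigroup $\{(N, k) : k\ \text{even},\ 0\le k\le 2NL\}$ of admissible values of this grading is itself generated in degree one with quadratic relations.

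Two inputs then complete the argument. First, the atoms are nice: the two-dimensional cherry blocks $C_{i,j}$, and (once $n\ge 6$, so $\tree$ has an internal trinode) the three-dimensional interior-trinode blocks, are normal with toric ideals generated in degrees $\le 3$. This is a direct but not entirely routine check — in essence a lattice-point splitting argument — in which the hypothesis $L>1$ and the doubling of the level to $2L$ supply precisely the room required; for $L=1$ one is instead in the delicate situation governed by Abe's projective-normality theorem \cite{A}. Second, one needs a gluing lemma: if $A_1, A_2$ are graded algebras, generated in degree one with defining ideals in degrees $\le 3$, glued along a base semigroup algebra that is generated in degree one with quadratic defining ideal, then the toric fiber product $A_1\times A_2$ is again generated in degree one with defining ideal in degrees $\le 3$. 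This is Sullivant's toric-fiber-product theorem, but to make it do the present job one wants its refinement to polytopes \emph{equipped with term orders}: one tracks a term order on each atom and on the base, arranged so that every generator of the ideal of a fiber product is obtained either by lifting a $(\le 3)$-generator of one factor, or by resolving, using the quadratic relations of the two factors and of the base, a single conflict along the shared grading — and so that the bound $3$ does not drift as further atoms are attached.

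I expect this last point to be the main obstacle: checking the hypotheses of the gluing lemma in the form needed, and keeping the degree bound uniformly at $3$ along the whole cherry-peeling of $\tree$. This is exactly the purpose of organizing the relevant polytopes-with-term-orders into a category closed under the pertinent fiber products — once the cherry blocks, the interior-trinode blocks, and the edge-weight bases are known to lie in that category, the theorem for $\tree$ follows formally by iterating the closure property. The compatibility hypothesis is used twice: to ensure each cherry sums to an even number, so that the gluings happen along honest, $N$-independent even edge-gradings, and to ensure the inductive class is closed under cherry removal.
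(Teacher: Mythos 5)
Your proposal follows essentially the same route as the paper's (whose proof of this theorem is the one from \cite{M1}, redone and generalized here as Theorem \ref{polypres}): cut the tree into trinode building blocks, use compatibility to make all internal edge weights even so that the blocks fiber over intervals, verify the blocks are normal with relations in degree at most $3$ (with $L>1$ needed for the interior trinode $P_3(L)$, which carries the single cubic relation), and propagate these properties through toric fiber products by a term-order refinement of Sullivant's theorem — precisely the flag/balanced machinery of Propositions \ref{flaggen} and \ref{flagrel}. The only adjustments you would need are exactly the ones the paper makes: the base of each fiber product must have unique standard monomials with respect to a compatible term order (the balanced order on the interval, whose standard regions are unit segments), not merely degree-one generation with quadratic relations as your gluing lemma states, and the cherry-peeling induction with a single free leaf should be widened to the simultaneous fiber product over all internal edges of the exploded graph, since peeling a cherry consisting of two fixed leaves creates a second free leaf and so leaves your inductive class.
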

 
\noindent
Our first result is a generalization of Theorem \ref{g1} to tree-like graphs. 

\begin{theorem}\label{polypres}
Let $\Gamma$ be tree-like, and $L >1,$ and $\vec{r}$ compatible with
$\Gamma,$ then $\C[P_{\Gamma}(\vec{r}, 2L)]$ is generated in degree $1,$ and
the binomial ideal of relations $I_{P_{\Gamma}(\vec{r}, 2L)}$ is generated by polynomials of degree $\leq 3.$
\end{theorem}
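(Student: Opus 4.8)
The plan is to reduce the tree-like case to the known tree case (Theorem \ref{g1}) by exhibiting $P_\Gamma(\vec r,2L)$ as a fiber product of a tree polytope with a small loop polytope, and then to show that the passage to fiber products preserves both "generated in degree $1$" and "ideal generated in degree $\leq 3$." Concretely, a tree-like $\Gamma$ of genus $g$ is obtained from a trivalent tree $\tree$ with $g+n$ leaves by attaching a loop at $g$ of the leaves; attaching a loop at a leaf $\ell$ forces the weight $r_\ell$ on the former leaf-edge to satisfy the extra triangle inequality $r_\ell \leq 2\,w(\text{loop edge})$ together with $r_\ell \leq 2(2L) - 2\,w(\text{loop edge})$ and the parity condition. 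So first I would set up the projection $\pi_\ell\colon P_\Gamma(\vec r,2L)\to P_{\tree}(\vec r,2L)$ obtained by forgetting the loop coordinates, and describe its fibers: over a lattice point $w$ of $P_{\tree}$, the fiber is a product over the loop-leaves $\ell$ of the segment of integers $k$ with $r_\ell/2 \le k \le 2L - r_\ell/2$ (or the analogous interval dictated by conditions (1)–(2) of Definition \ref{polydef}), which is just an interval polytope, visibly with Koszul (indeed, quadratically generated) semigroup algebra.

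The core of the argument is a lemma on toric fiber products: if $P = P_1 \times_\Delta P_2$ is a fiber product of lattice polytopes along a common face/projection $\Delta$, and each $\C[P_i]$ is generated in degree $1$ with ideal generated in degrees $\le 3$, and the "linking" polytope $\Delta$ is nice enough (here a lattice simplex or a point for the tree case, an interval otherwise), then $\C[P]$ is again generated in degree $1$ with relations in degree $\le 3$. The mechanism is the Sturmfels-type description of the toric ideal of a fiber product: its generators are built from the generators of $I_{P_1}$, $I_{P_2}$, and binomials expressing the two ways of splitting a fiber. I would check (i) surjectivity of the degree-$1$ part of the Segre-type map, using that over each lattice point $w\in P_{\tree}$ the fiber interval has its own lattice points reachable by degree-$1$ generators, combined with Theorem \ref{g1} for the base; and (ii) that a general degree-$N$ relation in $\C[P_\Gamma]$ can be rewritten, modulo the pulled-back relations of $\C[P_{\tree}]$ (degree $\le 3$) and the "interval" relations (degree $\le 2$), into a relation supported over a single fiber point of the base, where it is handled by the interval algebra.

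I expect the main obstacle to be bookkeeping the compatibility hypothesis and the parity lattice correctly through the fiber product: the lattice on $\R^{|Edge(\Gamma)|}$ is the sublattice where every trinode sum is even, and when we attach a loop the new trinode (incident to the loop edge twice and the leaf-edge once) imposes $r_\ell + 2w(\text{loop}) \in 2\Z$, i.e. $r_\ell$ even — which is exactly why compatibility is needed, and why one must be careful that the interval fibers are nonempty lattice polytopes with the induced lattice equal to $\Z$ (after rescaling). A secondary subtlety is that the "loop" attachment can instead be realized as a doubled edge at an edge midpoint (the caterpillar-graph alternative), so I would either treat the two local models uniformly or remark that the doubled-edge case is handled by the identical fiber-product estimate with $\Delta$ again an interval. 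Once the fiber-product lemma is in place and the local fiber geometry is identified as an interval polytope, the theorem follows by induction on $g$, peeling off one loop at a time.
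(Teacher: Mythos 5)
There is a genuine gap at the step you call the ``core lemma.'' You assert that if $P=P_1\times_\Delta P_2$ with each $\C[P_i]$ generated in degree $1$ and relations in degree $\le 3$, and $\Delta$ is ``nice enough (a lattice simplex or \ldots an interval),'' then the same holds for $\C[P]$. For an interval of length $>1$ (and a fortiori for your other implicit base, the whole tree polytope with interval fibers over it) this is exactly where the difficulty of the theorem lives: the semigroup of $[0,2L]$ does not have unique factorization (e.g.\ $0+2=1+1$), so a relation in the fiber product may project to a genuinely different factorization downstairs, and your rewriting step (ii) --- pushing a degree-$N$ relation into a single fiber modulo pulled-back base relations and interval relations --- has no mechanism for lifting base relations back to the fiber product. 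The Sturmfels--Sullivant description of the toric ideal of a fiber product that you invoke requires unique factorization (unimodularity) of the base; the paper's whole technical apparatus (the category $\mathcal{P}$, flag term orders, the unique-standard-monomial property of $([0,L],\Sigma^2)$, balancedness of the factors, and Propositions \ref{fiber}--\ref{flagrel}) exists precisely to replace that hypothesis, and your proposal neither supplies this structure nor an alternative. Note also that the degree-$3$ bound is not something the gluing could be agnostic about: it originates in a specific cubic standard relation of the internal trinode polytope $P_3(L)$, so any fiber-product lemma must be proved in a form that propagates, rather than merely quotes, such degree bounds.

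A secondary problem is the reduction itself: forgetting the loop coordinates does not map $P_\Gamma(\vec r,2L)$ onto $P_{\tree}(\vec r,2L)$ in the sense of Definition \ref{polydef}, because the $g$ former leaf-edges now carry \emph{free} (even) weights rather than fixed entries of $\vec r$; Theorem \ref{g1} therefore does not apply as a black box to your base. The paper avoids this by decomposing all the way to the building blocks of the exploded graph $\hat\Gamma$ (the loop polytope $B(L)$ and the trinode polytopes $P_3(L)$, $P_3(r,L)$, $P_3(r,s,L)$), showing each is balanced, and then gluing by fiber products over $([0,L],\Sigma^2)$ using Propositions \ref{flaggen} and \ref{flagrel}. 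Your ``peel off one loop at a time'' architecture is in the same spirit and could likely be repaired --- e.g.\ by citing balancedness of the tree polytopes from \cite{M1} and then invoking the paper's fiber-product machinery over the interval recording the connecting-edge weight --- but as written the decisive lemma is unproved and false at the stated level of generality. (Your remark about doubled edges is moot here: by definition tree-like graphs involve only loops at leaves, so $B_2(L)$ is not needed for Theorem \ref{polypres}.)
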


In \cite{M1} there are examples which show that certain degree $3$ polynomials are necessary
to generate $I_{P_{\Gamma}(\vec{r}, L)}.$  However, when we choose $\Gamma$ to have the caterpillar graph
topology, the relations can become more tractable.

\begin{theorem}\label{polyquad}
Let $\Gamma$ be a caterpillar graph, then $\C[P_{\Gamma}(2\vec{r}, 2L)]$ is generated in degree $1,$ and 
the presenting ideal $I_{P_{\Gamma}(2\vec{r}, 2L)}$ has a quadratic, square-free Gr\"obner basis.
\end{theorem}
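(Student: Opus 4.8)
The plan is to exploit the recursive structure of caterpillar graphs together with the toric fiber product machinery alluded to in the abstract, reducing the assertion about a general caterpillar graph to a base case on a small graph and an inductive step across a single separating edge. First I would set up coordinates on $P_\Gamma(2\vec r, 2L)$ adapted to the caterpillar topology: order the internal edges $e_1, \dots, e_k$ along the "spine" of the caterpillar, so that cutting at $e_j$ splits $\Gamma$ into a smaller caterpillar piece and a trinode (or a loop/doubled-edge piece at the closed end). The factorization/gluing of conformal blocks then realizes $\C[P_\Gamma(2\vec r, 2L)]$ as a toric fiber product of $\C[P_{\Gamma'}(2\vec r', 2L)]$ with the coordinate ring of the polytope attached to the terminal trinode, glued along the $\mathbb{P}^1$-worth of weights on $e_j$. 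Because all external weights are \emph{even} (we are in the $2\vec r$ case), the parity constraint on the glued edge is automatically satisfied on both sides, which is exactly what makes compatibility free here and the fiber product well-behaved.

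The key technical input I would invoke is Sullivant's theorem on toric fiber products: if the two factor ideals each have a squarefree quadratic Gröbner basis with respect to term orders that are compatible with the gluing grading, and if the "linear" parts glue correctly (degree-one generation is preserved), then the toric fiber product ideal also has a squarefree quadratic Gröbner basis. So the steps are: (1) prove the base case — a single trinode, or the smallest caterpillar graph with a loop/doubled edge — by direct computation, exhibiting a quadratic squarefree Gröbner basis and verifying degree-one generation; this should be a finite check since the polytopes are low-dimensional and, after passing to $2L$, the $3$-term relations from Theorem \ref{g1} become expressible quadratically. (2) Identify the grading by the weight on the spine edge $e_j$ and check that the natural term order from the degeneration (the one used in proving the degeneration theorem) restricts to term orders on the two factors of the required type. (3) Apply the toric fiber product gluing result to push the squarefree quadratic Gröbner basis property across $e_j$, and induct down the spine. (4) Handle the closed end — the loop or doubled edge of the caterpillar graph — as a modified base case, since there the "factor" is the coordinate ring of the polytope of a genus-one trinode-plus-loop rather than a plain trinode; here I would again compute directly, using that doubling $L$ makes the relevant relations quadratic.

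The main obstacle I expect is step (2): verifying that the term order inherited from the degeneration of $R_{C,\vec p}(\vec r, 2L)$ actually decomposes as a product order compatible with the toric fiber product decomposition at \emph{every} spine edge simultaneously, and that the squarefreeness is not destroyed by the gluing. Sullivant's gluing theorem requires hypotheses on how the term orders on the two factors interact on the shared variables, and checking these for the specific orders coming from conformal block multiplication — rather than for an arbitrary compatible order — is where the real work lies. A secondary subtlety is that the lattice for $P_\Gamma$ is not the full integer lattice but the sublattice cut out by the evenness conditions at trinodes; I would need to confirm that this sublattice is respected by the gluing map so that the semigroup algebra on the glued polytope really is the toric fiber product of the semigroup algebras on the pieces, and not a finite-index subalgebra. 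Once these lattice and term-order compatibilities are pinned down, the induction itself is formal. This also motivates the later part of the paper: abstracting "polytope with a term order that glues well under fiber products" into a category is precisely what makes step (2) reusable rather than ad hoc.
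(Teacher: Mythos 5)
Your proposal follows essentially the same route as the paper: decompose $P_{\Gamma}(2\vec{r}, 2L)$ along the separating spine edges into the building-block polytopes (trinodes with one or two fixed leaf weights, the loop, and the doubled-edge polytope $B_2(L)$), verify the quadratic square-free Gr\"obner basis property for these directly, and propagate it through the fiber products over the interval $[0,L]$ via a Sullivant-type gluing result (Proposition \ref{quadgrob}). The only notable differences are that the paper resolves your step-(2) worry by constructing a fresh concatenation term order $\boxtimes$ on each fiber product rather than checking that the degeneration's order decomposes, and that the cubic relation you mention never enters here, since the free trinode $P_3(L)$ (the sole source of the cubic) does not occur as a building block of a caterpillar graph.
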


A consequence is that any non-empty $P_{\Gamma}(\vec{r}, L)$ becomes "nice" (normal, with quadratically generated binomial ideal) when we take its Minkowski square, $P_{\Gamma}(2\vec{r}, 2L).$ 
Using the flat degeneration, these theorems can be lifted back to the algebras $R_{\C, \vec{p}}(\vec{r}, L).$

\begin{theorem}
For any $\vec{r}$, $L > 1$, and $C$ a generic curve, $R_{C, \vec{p}}(\vec{r}, 2L)$ is generated in degree $1$ with ideal of relations generated in degree $3.$  
For any $\vec{r}, L$, $R_{C, \vec{p}}(2\vec{r}, 2L)$ is Koszul. In particular, it has quadratic relations.  
\end{theorem}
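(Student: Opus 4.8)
I would reduce the theorem to the combinatorial facts already recorded in Theorems \ref{polypres} and \ref{polyquad} about the toric algebras $\C[P_\Gamma(\vec s, M)]$, and then transport those facts to $R_{C, \vec p}(\vec s, M)$ along flat degenerations. The one principle used throughout is that in a flat family of standard graded $\C$-algebras the graded Betti numbers $\beta_{ij}$ of the fibers, and likewise the bigraded Betti numbers of the residue field $\C$ over the fibers, are upper-semicontinuous, so they attain their minimum on a dense open subset of the base. Two degenerations are available. First, fix a trivalent graph $\Gamma$ of genus $g$ with $n$ labelled leaves; by the flat sheaf of algebras over $\bar{\mathcal M}_{g,n}$ (Proposition $1.5$ of \cite{M2}, as in the degeneration theorem above), the algebra $R_{C, \vec p}(\vec s, M)$ for smooth $(C, \vec p)$ and the algebra $R_{C_\Gamma, \vec p_\Gamma}(\vec s, M)$ of the stable graph-curve are fibers of a single flat family. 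Second, by the degeneration theorem, $R_{C_\Gamma, \vec p_\Gamma}(\vec s, M)$ flatly degenerates to $\C[P_\Gamma(\vec s, M)]$. Hence for a generic curve $(C, \vec p)$ — i.e.\ on the dense open locus where the Betti numbers are simultaneously minimal —
\[
\beta_{ij}\!\left(R_{C, \vec p}(\vec s, M)\right) \;\le\; \beta_{ij}\!\left(R_{C_\Gamma, \vec p_\Gamma}(\vec s, M)\right) \;\le\; \beta_{ij}\!\left(\C[P_\Gamma(\vec s, M)]\right) \qquad \text{for all } i, j,
\]
and the same chain holds for the Betti numbers of $\C$ over these algebras. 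Consequently any \emph{vanishing} of Betti numbers exhibited by the toric fiber — generation in degree $1$ ($\beta_{0j}=0$ for $j>1$), absence of minimal relations in degrees $>3$ ($\beta_{1j}=0$ for $j>3$), or linearity of the minimal free resolution of $\C$, i.e.\ Koszulness — is inherited by $R_{C, \vec p}(\vec s, M)$ at a generic curve.

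\textbf{First assertion.} I would apply this with $\vec s = \vec r$, $M = 2L$ and $\Gamma$ a tree-like graph \emph{compatible} with $\vec r$. Such a $\Gamma$ exists in the only relevant case: if $V_{C, \vec p}(N\vec r, 2NL)=0$ for all $N>0$ the ring is just $\C$ and the claim is vacuous, and otherwise the number of odd entries of $\vec r$ is even (the parity obstruction recalled just before the Proposition), so the Proposition above supplies a compatible tree-like $\Gamma$ of the needed genus and leaf count. Since $L>1$, Theorem \ref{polypres} says $\C[P_\Gamma(\vec r, 2L)]$ is generated in degree $1$ with $I_{P_\Gamma(\vec r, 2L)}$ generated in degrees $\le 3$; by the semicontinuity principle the same bounds pass to $R_{C, \vec p}(\vec r, 2L)$ for a generic curve.

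\textbf{Second assertion.} Here I would take $\vec s = 2\vec r$, $M = 2L$ and $\Gamma$ a caterpillar graph with $n$ leaves (a caterpillar tree when $g = 0$); no parity hypothesis is needed since $2\vec r$ is compatible with every graph. By Theorem \ref{polyquad} the ideal $I_{P_\Gamma(2\vec r, 2L)}$ has a quadratic, square-free Gr\"obner basis, so $\C[P_\Gamma(2\vec r, 2L)]$ flatly degenerates (its Gr\"obner degeneration for that term order) to a quotient of a polynomial ring by a quadratic monomial ideal, which is Koszul by Fr\"oberg's theorem; by the residue-field form of the semicontinuity principle applied to this family, $\C[P_\Gamma(2\vec r, 2L)]$ is itself Koszul. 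Feeding this into the chain above shows $R_{C, \vec p}(2\vec r, 2L)$ is Koszul for a generic curve, and a Koszul algebra is in particular quadratically presented.

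\textbf{Main obstacle.} With the genuine combinatorics already packaged in Theorems \ref{polypres} and \ref{polyquad}, what remains is homological bookkeeping, and this is where care is needed: one must justify upper-semicontinuity of the relevant graded Betti numbers — including those of the residue field, which is what makes Koszulness an open condition — in the precise relative graded setting of each flat family; string the resulting inequalities $\beta(R^{\mathrm{gen}}_{C,\vec p}) \le \beta(R_{C_\Gamma,\vec p_\Gamma}) \le \beta(\C[P_\Gamma])$ together correctly through the term orders and Rees constructions involved; and note that the minimal value of $\beta$ over the irreducible base $\bar{\mathcal M}_{g,n}$ is attained on a dense open subset of its smooth locus. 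A minor additional point is the existence of the auxiliary graphs — a tree-like graph compatible with an admissible $\vec r$, and a caterpillar graph of the prescribed genus and leaf count — which follows from the definitions together with the Proposition.
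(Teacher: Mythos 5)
Your proposal is correct and follows essentially the same route as the paper: pass through the graph curve to the toric algebras $\C[P_\Gamma(\vec r, 2L)]$ (tree-like, compatible $\Gamma$) and $\C[P_\Gamma(2\vec r, 2L)]$ (caterpillar $\Gamma$), invoke Theorems \ref{polypres} and \ref{polyquad}, and transfer the conclusions back to the generic fiber by semicontinuity along the two flat families. The paper carries out the step you flag as the main obstacle exactly as you anticipate — Nakayama's lemma for generation and relation degrees, and non-increase under specialization of the Betti numbers of the generalized Koszul complex (whose graded pieces are locally free) over a Noetherian affine \'etale cover of a dense open substack of $\bar{\mathcal{M}}_{g,n}$ — while leaving implicit the standard fact, which you spell out via Fr\"oberg, that a quadratic square-free Gr\"obner basis forces Koszulness of the toric fiber.
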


\begin{proof}
We give a sketch of the argument. Let $\Gamma$ and $\vec{r}$ be compatible.  We assume that both properties have been proved for the appropriate toric algebra $\C[P_{\Gamma}(\vec{r}, L)].$  This implies the corresponding statements for the fiber $R_{C_{\Gamma}, \vec{p}_{\Gamma}}(\vec{r}, L)$ by standard properties of associated graded algebras.   The moduli $\bar{\mathcal{M}}_{g, n}$ is a connected, Noetherian Deligne-Mumford stack over $\C,$ therefore for any closed point $q: Spec(\C) \to \bar{\mathcal{M}}_{g, n}$ there is a dense open substack $U$ with an \'etale Noetherian affine cover $Q: Spec(A) \to U.$  The fiber of our sheaf of algebras is then equal to the fiber of the pullback sheaf over $q': Spec(\C) \to Spec(A)$ for some closed point of $Spec(A).$ 

Over a Noetherian affine base,  we obtain the statement on generators and relations from an application of Nakayama's lemma to the graded components of the algebra and its presenting ideal. The statement on the Koszul property results from the fact that Betti numbers of the corresponding generalized Koszul complex (see \cite{E}, Ex $17.22$) do not increase under specialization because their graded components are locally free modules.   We may reduce to this case because the generic point of $\bar{\mathcal{M}}_{g, n}$ is \'etale-covered by the generic point of $Spec(A),$ and the fiber over this point is obtained by base-change.

\end{proof}

\begin{remark}
Naturally, this theorem applies in the case when $\vec{r}$ is empty.  In this case it implies that the above generation and relation properties hold for the projective coordinate ring of the square of the line bundle $\mathcal{L}$ on $\mathcal{M}_{C}(SL_2(\C)),$ for generic $C.$  The generation result is weaker than the result of Abe, \cite{A}, however the result on relations is new.  Abe's strategy is similar to ours, he employs a filtration built from the factorization rules, however he stops short of a toric degeneration.  It is at present unknown to us if Abe's results can be replicated with an inspired choice of graph $\Gamma.$   
\end{remark}

\begin{remark}
When $\Gamma$ is a tree and $L$ is very large, the algebra $\C[P_{\Gamma}(\vec{r}, L)]$ is also realized as a toric degeneration of a projective
coordinate ring of the moduli of $\vec{r}$-weighted points on the projective line, $\mathcal{M}_{\vec{r}} = SL_2(\C) {}_{\vec{r}} \ql [\mathbb{P}^1]^n.$
Theorem \ref{polyquad} then shows that this projective coordinate ring has a quadratic, square-free Gr\"obner basis when $\vec{r}$ is even, a result 
also obtained by Herring and Howard, \cite{HH}.
\end{remark}

\subsection{Building blocks of $P_{\Gamma}(\vec{r}, L)$}

Theorems  \ref{g1} ,\ref{polypres}, and \ref{polyquad} are proved by understanding the structure of $P_{\Gamma}(\vec{r}, L)$ as a fiber product of rational polytopes. In \cite{M1}, the polytopes $P_{\tree}(\vec{r}, L)$ for a trivalent tree $\tree$ were shown to be "balanced," a geometric feature which endows a polytope with several nice algebraic properties. The technical observation that makes these theorems work is the fact that the balanced property is stable with respect to certain kinds of fiber products.

\begin{definition}
Let $P_1 \subset \R^n$, $P_2 \subset \R^m$ and $Q \subset \R^k$ be lattice polytopes, 
and let $\pi_1: \R^n \to \R^k$ and $\pi_2: \R^m \to \R^k$ be lattice maps which take
$P_i$ into $Q.$  The fiber product $P_1 \times_Q P_2 \subset \R^{n + m - k}$ is defined
to be the subset of points $(x, y) \in P\times Q \subset \R^{n+m}$ such that $\pi_1(x) = \pi_2(y).$
\end{definition}

We will prove Theorem \ref{polypres} by following a similar strategy to the program used in \cite{M1}.  The conditions which define $P_{\Gamma}(\vec{r}, L)$ are all localized around individual trinodes of $\Gamma,$ which implies we may represent $P_{\Gamma}(\vec{r}, L)$ as a fiber product of polytopes defined on the components of an "exploded graph" $\hat{\Gamma}.$  This is a graph obtained from $\Gamma$ by splitting any edge in $\Gamma$ which separates the graph into two disjoint components. 

\begin{figure}[htbp]
\centering
\includegraphics[scale = 0.5]{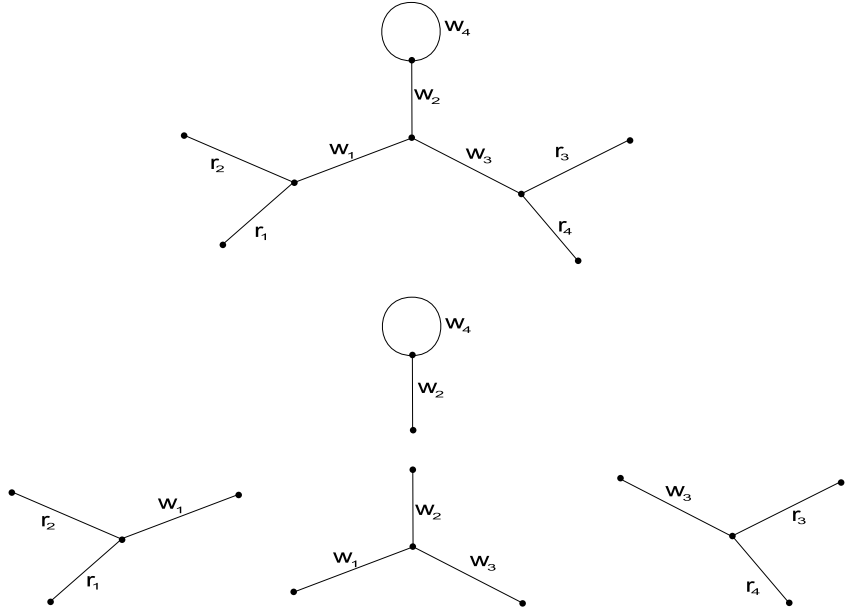}
\caption{Splitting the separating edges of $\Gamma$ (top) produces the union of graphs $\hat{\Gamma}$ (bottom).}
\label{Fig1}
\end{figure}

See also Figure \ref{assemble}.  The connected components of $\hat{\Gamma}$ each have a special polytope associated to them, we call these the "building blocks" of $P_{\Gamma}(\vec{r}, L).$
The strategy is to first analyze the building block polytopes, establishing that they have the algebraic features we want, then show these features are stable under fiber product.   When $\Gamma$ is tree-like, the polytopes given by weightings of the components of $\hat{\Gamma}$ are single loops with an edge, or a trinode with $0$, $1$, or $2$ edges with edge weight fixed to a specific value (say $r,$ or $r,s$).  When $\Gamma$ is caterpillar, no trinodes with $0$ fixed edges appear, but a loop with $2$ edges can appear, see  Figure \ref{blocks} below.  There is a depiction of each polytope beneath its corresponding graph. See Figures \ref{Fig5}, \ref{Fig6}, \ref{Fig6.5}, and \ref{Fig7} for more detailed illustrations of these polytopes.  

\begin{figure}[htbp]
\centering
\includegraphics[scale = 0.55]{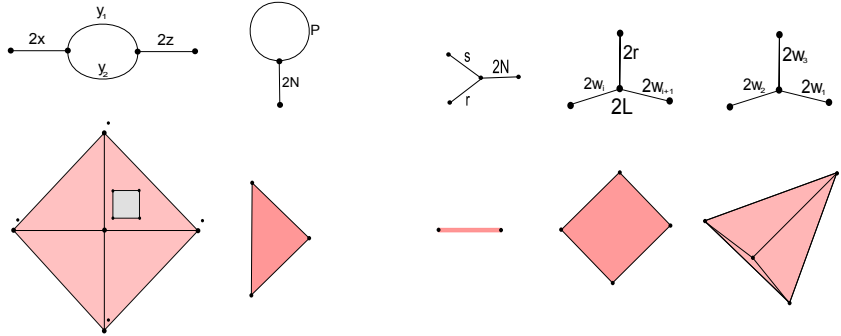}
\caption{The building blocks of $P_{\Gamma}(\vec{r}, L).$}
\label{blocks}
\end{figure}

\noindent
We call the polytope corresponding to a loop with an edge $B(L),$ weightings of a trinode with one or two fixed edges (with value equal to $r, s$) $P_3(r, L)$ or $P_3(r, s, L)$ respectively, general weightings of a trinode $P_3(L),$ and a loop with two edges $B_2(L).$   The polytope $B_2(L)$ is $4$ dimensional, so the image depicted in the figure is a projection into $\R^2.$

Here is where compatibility and the assumption that we work with even level $2L$ are used.  It is another simple combinatorial exercise to verify that when $\Gamma, \vec{r}$ are compatible, all non-leaf and non-loop edges of $\Gamma$ are weighted with an even number.  This greatly simplifies the structure of the building block polytopes.  The main technical part of this paper is to develope and clarify the polytope properties that make this construction work.   As fiber products are categorical operations, we view the natural habitat for the above theorems as closure properties of a category of polytopes under certain fiber products.  We define this category next.

\subsection{The category $\mathcal{P}$}

  For the following definitions see \cite{St}. For a lattice polytope $P \subset \R^n$ let $\Sigma: X_P \to \R$ be an assignment of real numbers to the lattice points of $P.$  The function $\Sigma$ defines a term order on the polynomial ring $\C[X_P].$   For a monomial $x^{\bold{m}} = x_1^{m_1}\ldots x_k^{m_k},$ we define $\Sigma(\bold{m}) = \sum_{i = 1}^k m_i\Sigma(x_i).$  For two monomials, we say $x^{\bold{m}} \leq x^{\bold{n}}$ when $deg(\bold{m}) < deg(\bold{n})$ or $deg(\bold{m}) = deg(\bold{n})$ and  $\Sigma(\bold{m}) \leq \Sigma(\bold{n}).$

The polytope $P$ defines a graded algebra $\C[P],$ where the $m-$th graded component has a basis given by the lattice points $X_{mP}$ of the $m$-th minkowski sum $mP = P + \ldots + P,$ and the product operation is lattice point addition.   Let $I_P \subset \C[X_P]$ be the toric ideal which is the kernel of the map $\C[X_P] \to \C[P].$  For a polynomal $f \in I_P,$ the initial form $in_{\Sigma}(f)$ with respect to $\Sigma$ is defined to be the sum of the monomials in $f$ which are largest under the term order defined by $\Sigma.$ 

\begin{definition}
A monomial $x^{\bold{m}}$ is said to be standard with respect to a term order $\Sigma$ if it is $not$ an initial form of any polynomial in $I_P.$
\end{definition}

For a lattice point $b \in mP,$ we let $\Delta_b$ be the set of monomials
in $\C[X_P]$ which map to $b.$ The difference of any two elements in $\Delta_b$
is a member of $I_P.$   The function $\Sigma$ defines a partial ordering
on $\Delta_b,$ and the minimal elements of this partial ordering are the standard monomials. 

\begin{definition}
We define the category $\mathcal{P}$ to have pairs $(P, \Sigma)$ as objects,
where $P$ is a lattice polytope in some $\R^n,$ and $\Sigma$ is a term order on $P$. 
A morphism $\pi:(P, \Sigma) \to (Q, \Gamma)$ is a linear map on $\pi: P \to Q,$ induced by a map on 
lattices, such that the standard monomials of $\Sigma$ map to standard monomials of $\Gamma.$
\end{definition}

The following is the most general result we will prove about the
category $\mathcal{P}.$  We say $(Q, \Sigma)$ has unique standard monomials
if the sets $\Delta_b$ always have a unique minimal element when they are non-empty. 

\begin{proposition}\label{fiber}
If $\pi_1: (P_1, \Sigma_1) \to (Q, \Sigma)$ 
and $\pi_2: (P_2, \Sigma_2) \to (Q, \Sigma)$
are maps in $\mathcal{P},$ and if $(Q, \Sigma)$ has unique standard monomials, then the fiber product object
$(P_1, \Sigma_1)\times_{(Q, \Sigma)}(P_2, \Sigma_2)$ exists in $\mathcal{P}.$
\end{proposition}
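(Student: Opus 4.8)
The plan is to exhibit the fiber product object explicitly and then verify the universal property by a direct study of standard monomials, the hypothesis on $(Q,\Sigma)$ entering at exactly one place. As underlying polytope I take the fiber product $P_1\times_Q P_2$ defined above, with lattice $\Lambda_{12}=\{(a,b)\in\Lambda_1\times\Lambda_2:\pi_1(a)=\pi_2(b)\}$, and I set $\Sigma_{12}(a,b)=\Sigma_1(a)+\Sigma_2(b)$ on the lattice points $X_{P_1\times_Q P_2}=X_{P_1}\times_{X_Q}X_{P_2}$. Every lattice polytope with every real-valued function on its lattice points is an object of $\mathcal{P}$, so $(P_1\times_Q P_2,\Sigma_{12})$ is one; the coordinate projections $p_1,p_2$ are linear maps induced by the lattice projections $\Lambda_{12}\to\Lambda_i$, and $\pi_1p_1=\pi_2p_2$ holds tautologically. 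So the proposition reduces to two things: (a) $p_1$ and $p_2$ are morphisms in $\mathcal{P}$, i.e. carry standard monomials to standard monomials; and (b) given any object $(T,\Sigma_T)$ with morphisms $q_i:(T,\Sigma_T)\to(P_i,\Sigma_i)$ satisfying $\pi_1q_1=\pi_2q_2$, the unique linear map $q=(q_1,q_2):T\to P_1\times_Q P_2$ with $p_iq=q_i$ (automatically induced by a lattice map into $\Lambda_{12}$) is a morphism in $\mathcal{P}$.

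Next I would record two bookkeeping facts about the fiber product. First, $X_{m(P_1\times_Q P_2)}=X_{mP_1}\times_{X_{mQ}}X_{mP_2}$ for all $m$, so a degree-$m$ monomial $x^{\mathbf M}\in\C[X_{P_1\times_Q P_2}]$ lies in a fiber $\Delta_{(b_1,b_2)}$ iff its projections $p_1(x^{\mathbf M})$ and $p_2(x^{\mathbf M})$ lie in the fibers $\Delta_{b_1}$ and $\Delta_{b_2}$ downstairs, and $\Sigma_{12}(x^{\mathbf M})=\Sigma_1(p_1(x^{\mathbf M}))+\Sigma_2(p_2(x^{\mathbf M}))$. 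Second, the matching fact: a pair $(x^{\mathbf N_1},x^{\mathbf N_2})$ with $\mathbf N_i\in\Delta_{b_i}$ arises as $(p_1(x^{\mathbf M}),p_2(x^{\mathbf M}))$ for some $x^{\mathbf M}\in\Delta_{(b_1,b_2)}$ exactly when $\pi_1(x^{\mathbf N_1})=\pi_2(x^{\mathbf N_2})$ as monomials in $\C[X_Q]$ (one builds $x^{\mathbf M}$ from any bijection of the factors of $x^{\mathbf N_1}$ and $x^{\mathbf N_2}$ that matches $\pi_1$- and $\pi_2$-images), and every such lift has $\Sigma_{12}$-value $\Sigma_1(x^{\mathbf N_1})+\Sigma_2(x^{\mathbf N_2})$. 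The first fact already gives the easy half, with no hypothesis on $Q$: if $p_1(x^{\mathbf M})$ and $p_2(x^{\mathbf M})$ are standard, then any $x^{\mathbf M'}\in\Delta_{(b_1,b_2)}$ satisfies $\Sigma_1(p_1(x^{\mathbf M'}))\geq\Sigma_1(p_1(x^{\mathbf M}))$ and $\Sigma_2(p_2(x^{\mathbf M'}))\geq\Sigma_2(p_2(x^{\mathbf M}))$, hence $\Sigma_{12}(x^{\mathbf M'})\geq\Sigma_{12}(x^{\mathbf M})$, so $x^{\mathbf M}$ is standard. This settles (b): for a standard monomial $x^{\mathbf K}$ of $(T,\Sigma_T)$ one has $p_i(q(x^{\mathbf K}))=q_i(x^{\mathbf K})$ standard because $q_i$ is a morphism, so $q(x^{\mathbf K})$ is standard.

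The substantive step is (a), and it is here that the uniqueness hypothesis is essential. Suppose $x^{\mathbf M}$ is standard upstairs with image $(b_1,b_2)$ and $c:=\pi_1(b_1)=\pi_2(b_2)$, but $p_1(x^{\mathbf M})$ is not standard in $(P_1,\Sigma_1)$. Choose a standard monomial $x^{\mathbf N_1^{\ast}}\in\Delta_{b_1}$ of $\Sigma_1$ and a standard monomial $x^{\mathbf N_2^{\ast}}\in\Delta_{b_2}$ of $\Sigma_2$. Since $\pi_1$ and $\pi_2$ are morphisms, $\pi_1(x^{\mathbf N_1^{\ast}})$ and $\pi_2(x^{\mathbf N_2^{\ast}})$ are standard monomials of $(Q,\Sigma)$ lying in the fiber $\Delta_c$; because $(Q,\Sigma)$ has a \emph{unique} standard monomial there, these two monomials coincide. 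Therefore $(x^{\mathbf N_1^{\ast}},x^{\mathbf N_2^{\ast}})$ is matchable and lifts to a monomial $x^{\mathbf M^{\ast}}\in\Delta_{(b_1,b_2)}$ with $\Sigma_{12}(x^{\mathbf M^{\ast}})=\Sigma_1(x^{\mathbf N_1^{\ast}})+\Sigma_2(x^{\mathbf N_2^{\ast}})$. Now $\Sigma_1(x^{\mathbf N_1^{\ast}})$ is the minimum of $\Sigma_1$ over $\Delta_{b_1}$, whereas $p_1(x^{\mathbf M})$ is not minimal there, so $\Sigma_1(x^{\mathbf N_1^{\ast}})<\Sigma_1(p_1(x^{\mathbf M}))$; also $\Sigma_2(x^{\mathbf N_2^{\ast}})\leq\Sigma_2(p_2(x^{\mathbf M}))$; adding, $\Sigma_{12}(x^{\mathbf M^{\ast}})<\Sigma_{12}(x^{\mathbf M})$, contradicting the standardness of $x^{\mathbf M}$. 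Hence $p_1$, and by symmetry $p_2$, is a morphism, and the proposition follows.

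I expect the matching fact to be the main obstacle to set up cleanly: an upstairs monomial encodes not just its two downstairs projections but a bijection between their factors over $Q$, so several upstairs monomials can lie over one downstairs pair, and these tie under $\Sigma_{12}$; a consequence worth recording is that $(P_1\times_Q P_2,\Sigma_{12})$ generally does \emph{not} have unique standard monomials even when both factors do. A secondary point requiring care is that the building-block polytopes at stake are not assumed normal, so everything must be carried out with the lattice-point sets $X_{mP}$ and their fibers rather than with generation in degree one; in particular the identity $X_{m(P_1\times_Q P_2)}=X_{mP_1}\times_{X_{mQ}}X_{mP_2}$ should be checked directly from the definitions, since dilation commutes with the polytope fiber product and $\Lambda_{12}$ is exactly the matching sublattice.
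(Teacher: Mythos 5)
Your proof is correct and follows essentially the same route as the paper: both hinge on showing that a monomial on $P_1\times_Q P_2$ is standard for $\Sigma_1\oplus\Sigma_2$ exactly when its two projections are standard, with the unique-standard-monomial hypothesis on $(Q,\Sigma)$ used only to match the two downstairs standard factorizations into a single lifted monomial of strictly smaller weight. Your explicit treatment of the matching/lifting step and of the universal property just spells out what the paper states more tersely.
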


The category of lattice polytopes comes with a fiber product, so the content of this proposition is in showing that there is a term order on $P_1 \times_Q P_2$
with maps to $(P_1, \Sigma_1)$, $(P_2, \Sigma_2)$ satisfying the universal 
property of a fiber product.  Next, we formalize properties of balanced polytopes first observed in \cite{M1} by 
showing that the category $\mathcal{P}$ has a distinguished subcategory that is closed under fiber product.

\begin{definition}
A term order $\Sigma$ on a lattice polytope $P$ is said to be "flag" when
a monomial $x^{\bold{m}}$ from $P$ is standard if and only if every degree $2$ divisor of $x^{\bold{m}}$
is standard, and $x^n$ is standard for any $x \in X_P$. 
\end{definition}

\begin{proposition}\label{flagfiber}
If $\pi_1: (P_1, \Sigma_1) \to (Q, \Sigma)$ 
and $\pi_2: (P_2, \Sigma_2) \to (Q, \Sigma)$
are maps in $\mathcal{P},$ and if $(Q, \Sigma)$ has unique standard monomials, then 
$(P_1, \Sigma_1)$ and $(P_2, \Sigma_2)$ flag implies
$(P_1, \Sigma_1)\times_{(Q, \Sigma)}(P_2, \Sigma_2)$ flag. 
\end{proposition}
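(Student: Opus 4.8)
The plan is to build the term order $\Sigma$ on the fiber product $P_1 \times_Q P_2$ as in the proof of Proposition \ref{fiber} and then check directly that the flag property is inherited. Recall that a lattice point of $m(P_1 \times_Q P_2)$ is a pair $(x,y)$ with $x \in mP_1$, $y \in mP_2$, and $\pi_1(x) = \pi_2(y)$; correspondingly a monomial in $\C[X_{P_1 \times_Q P_2}]$ records how $(x,y)$ is written as a sum of generators $(a_i, b_i)$. The key structural input, available because $(Q,\Sigma)$ has unique standard monomials, is that the standard monomials of $\Sigma$ on $P_1 \times_Q P_2$ are exactly the pairs of monomials $(M_1, M_2)$ on $P_1$ and $P_2$ which are themselves standard \emph{and} which project to the \emph{same} (unique) standard monomial on $Q$; this is precisely what makes $\pi_1, \pi_2$ morphisms and gives the universal property. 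I would isolate this description as the first step, citing the construction in Proposition \ref{fiber}.

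Second, I would prove the ``only if'' direction of the flag condition for $\Sigma$: if a monomial $(M_1, M_2)$ on the fiber product is standard, then every degree-$2$ divisor is standard, and each $(a_i,b_i)^n$ is standard. A degree-$2$ divisor of $(M_1,M_2)$ is of the form $(N_1, N_2)$ where $N_j$ is a degree-$2$ divisor of $M_j$; since $M_1, M_2$ are standard and $\Sigma_1, \Sigma_2$ are flag, $N_1, N_2$ are standard, and since $N_1, N_2$ are divisors of $M_1, M_2$ they project into the (divisor of the) common standard monomial on $Q$, hence they share the same standard projection, hence $(N_1,N_2)$ is standard by the description from step one. The power statement $(a_i,b_i)^n$ is handled the same way using that $a_i^n$, $b_i^n$ are standard in $P_1, P_2$ and project to $\pi_1(a_i)^n = \pi_2(b_i)^n$.

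Third, the ``if'' direction, which is the substantive one: suppose $(M_1, M_2)$ has the property that all its degree-$2$ divisors are standard in the fiber product and all relevant pure powers are standard; we must show $(M_1, M_2)$ itself is standard. From step one it suffices to show (a) $M_1$ is standard in $P_1$, (b) $M_2$ is standard in $P_2$, and (c) $M_1, M_2$ project to the same standard monomial on $Q$. For (a) and (b): every degree-$2$ divisor $N_1$ of $M_1$ extends (using the fiber-product structure and the fact that the $\pi_j$ are surjective onto $Q$ as maps of polytopes) to a degree-$2$ divisor of $(M_1, M_2)$, which is standard, hence $N_1$ is standard by step one; since $\Sigma_1$ is flag and the powers are standard, $M_1$ is standard, and likewise $M_2$. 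For (c): $\pi_1(M_1)$ and $\pi_2(M_2)$ are monomials on $Q$ lying over the same lattice point $\pi_1(x) = \pi_2(y) \in mQ$; because $M_1, M_2$ are standard and $\pi_1, \pi_2$ are morphisms in $\mathcal{P}$, both $\pi_1(M_1)$ and $\pi_2(M_2)$ are standard monomials on $Q$ over that point, and uniqueness of standard monomials for $(Q,\Sigma)$ forces $\pi_1(M_1) = \pi_2(M_2)$. This gives (c) and completes the argument.

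The main obstacle I anticipate is step three, part (a)/(b): making precise the claim that a degree-$2$ divisor of $M_1$ alone can be realized as (the first coordinate of) a degree-$2$ divisor of the pair $(M_1,M_2)$. This requires knowing that the writing of $(x,y)$ as a sum of generators of the fiber product is compatible with the separate writings of $x$ and $y$ — i.e., that generators of $P_1 \times_Q P_2$ do surject appropriately onto generators of $P_1$ and $P_2$ over a fixed generator of $Q$ — which is exactly the place where one must invoke unique standard monomials on $Q$ and the explicit construction of $\Sigma$ from Proposition \ref{fiber} rather than a soft categorical argument. Everything else is bookkeeping with divisors and the definition of flag, but that compatibility is the crux and should be stated carefully as a lemma before the main induction.
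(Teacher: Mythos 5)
Your proposal is correct and follows essentially the same route as the paper's proof: both rest on the characterization established in Proposition \ref{fiber} (a fiber-product monomial is standard iff its two projections are standard), lift each degree-$2$ divisor of $M_1$ and $M_2$ to a degree-$2$ divisor of the pair, and then invoke flagness of $(P_1,\Sigma_1)$ and $(P_2,\Sigma_2)$. Note that your condition (c) and the compatibility worry in your final paragraph are vacuous: the two projections of a fiber-product monomial $x^{[a_1,b_1]\cdots[a_k,b_k]}$ are by definition the matched monomials $x^{a_1\cdots a_k}$ and $y^{b_1\cdots b_k}$ with $\pi_1(a_i)=\pi_2(b_i)$, so they project to literally the same $Q$-monomial and every degree-$2$ divisor of $M_1$ lifts tautologically to $x^{[a_i,b_i][a_j,b_j]}$.
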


Balanced polytopes are examples of polytopes with flag term orders.  Geometrically, the term order divides the polytope
into sub-polytopes, and the standard monomials are simply monomials with components all corresponding to lattice points from the same sub-polytope, called a "standard region."  In the case of balanced polytopes, these regions are all lattice sub-polytopes of the unit cube.  Theorem \ref{polypres} above is a consequence of the following propositions. Recall that a rational lattice polytop $P$ is said to be $normal$ when the graded algebra $\C[P]$ is generated in degree $1,$ by the elements corresponding to the lattice points of $P.$

\begin{proposition}\label{flaggen}
Let $(P_1, \Sigma_1)\times_{(Q, \Sigma)}(P_2, \Sigma_2)$ be a fiber product of flag elements
over $(Q, \Sigma)$ with unique standard monomials.  If each $(P_i, \Sigma_i)$ is normal, then 
$(P_1, \Sigma_1)\times_{(Q, \Sigma)}(P_2, \Sigma_2)$ is normal.  
\end{proposition}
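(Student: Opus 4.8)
The plan is to lift a degree-$1$ generation statement across the fiber product by comparing lattice points of Minkowski multiples. Concretely, I would start with a lattice point $b \in m(P_1 \times_Q P_2)$ and try to write it as a sum of $m$ lattice points of $P_1 \times_Q P_2$. Such a $b$ is a pair $(b_1, b_2)$ with $b_i \in mP_i$ and $\pi_1(b_1) = \pi_2(b_2) =: c \in mQ$. The obvious approach is: use normality of $(P_1, \Sigma_1)$ to decompose $b_1 = u_1 + \dots + u_m$ with $u_j \in X_{P_1}$, and normality of $(P_2, \Sigma_2)$ to decompose $b_2 = v_1 + \dots + v_m$ with $v_j \in X_{P_2}$; the difficulty is that $\pi_1(u_j)$ need not equal $\pi_2(v_j)$ term by term — the two decompositions of $c$ obtained by pushing forward do not a priori match up. This matching problem is exactly where the hypotheses (flag, unique standard monomials on the base) must do their work.

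The key idea is to route both decompositions through the \emph{same} standard decomposition downstairs. First I would observe that, because $\pi_1$ and $\pi_2$ are morphisms in $\mathcal{P}$, they carry standard monomials to standard monomials; and because $(Q,\Sigma)$ has unique standard monomials, each fiber set $\Delta_c$ in $\C[X_Q]$ has a \emph{single} minimal (standard) element. So I would first replace the chosen decompositions of $b_1$ and $b_2$ by \emph{standard} ones — i.e.\ decompositions $b_1 = \sum u_j$ and $b_2 = \sum v_j$ whose associated monomials are standard for $\Sigma_1$ and $\Sigma_2$ respectively. Here is where flagness enters: a flag term order that is also normal has the property that standard monomials of any degree are products of lattice points lying in a common standard region, and degree-$1$ generation lets us realize any $b_i$ by such a standard monomial. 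Pushing a standard monomial of $\Sigma_1$ forward under $\pi_1$ gives a standard monomial of $\Sigma$ lying over $c$; pushing the standard monomial of $\Sigma_2$ forward under $\pi_2$ gives a standard monomial of $\Sigma$ also lying over $c$. By uniqueness of standard monomials on $(Q,\Sigma)$, these two pushed-forward monomials are \emph{equal} as monomials in $\C[X_Q]$; that is, up to reindexing, $\pi_1(u_j) = \pi_2(v_j)$ for every $j$. Therefore $(u_j, v_j) \in P_1 \times_Q P_2$ is a lattice point, and $b = \sum_j (u_j, v_j)$ is the desired degree-$1$ decomposition.

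What remains is bookkeeping that I would carry out carefully: (i) confirm that the lattice structure on $P_1 \times_Q P_2$ is the one making $(u_j, v_j)$ an honest lattice point whenever $u_j, v_j$ are lattice points with matching image — this is immediate from the definition of the fiber product lattice as the fiber product of the lattices; (ii) justify that a normal flag polytope admits, for each lattice point of each Minkowski multiple, a decomposition into degree-$1$ lattice points whose product is a \emph{standard} monomial — this follows because the standard monomials form a monomial basis of $\C[P_i]$, and in the normal case every graded piece is spanned by degree-$1$ products, so the standard monomial lying in $\Delta_{b_i}$ is automatically a product of $\deg$-$1$ standard monomials; and (iii) check that the fiber product term order $\Sigma_1 \times_\Sigma \Sigma_2$ constructed in Proposition~\ref{fiber} is the relevant one and that "standard for the fiber product" is compatible with the pairing above. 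The main obstacle is step (ii) together with the matching argument: making precise that "standard of degree $m$ $\Rightarrow$ product of $m$ degree-$1$ standards with a \emph{coherent} pushforward to the base" requires using the flag hypothesis on $P_1$ and $P_2$ in an essential way, not merely normality — without flagness the pushed-forward monomials could fail to be standard and the uniqueness argument on $(Q,\Sigma)$ would not apply. Everything else is routine once that coherence is in hand.
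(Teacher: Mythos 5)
Your argument is correct, but it follows a genuinely different route from the paper. The paper never decomposes a lattice point of the fiber product directly: it first shows (Proposition~\ref{flagfiber}) that $(P_1\times_Q P_2, \Sigma_1\oplus\Sigma_2)$ is again flag, so the fiber product is tiled by the convex hulls of the maximal facets of its initial complex; by Proposition~\ref{flagprod} each such region is $S_1\times_{T}S_2$ with $S_i$ a standard region of $P_i$ and $T$ a standard region of $Q$ having genuine unique factorization, and then the elementary proposition at the start of the fiber-product section (normal $\times_{\text{u.f.}}$ normal is normal) is applied regionwise. You instead work globally: take the $\Sigma_i$-minimal (standard) degree-$m$ factorizations of $b_1$ and $b_2$, push them forward (morphisms in $\mathcal{P}$ send standard monomials to standard monomials), and use the unique-standard-monomial hypothesis on $(Q,\Sigma)$ to force the two pushforwards to coincide as monomials, which lets you pair the factors into lattice points of $P_1\times_Q P_2$. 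This is in effect the paper's motivating proposition with ``unique factorization of the base'' replaced by ``unique standard monomials,'' achieved by standardizing upstairs first; it is shorter and, notably, does not actually use flagness of $(P_1,\Sigma_1)$ or $(P_2,\Sigma_2)$ anywhere, so it proves a slightly more general statement. Two small corrections to your own accounting of hypotheses: in your step (ii), what you need is only that $\Delta_{b_i}$ is non-empty in degree $m$ (that is exactly normality of $P_i$) and that a $\Sigma_i$-minimal element of it is standard and is automatically a product of $m$ degree-one variables; the claim that flagness is ``essential'' there is not right, since the pushforward of a standard monomial is standard by the very definition of a morphism in $\mathcal{P}$, not by flagness. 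And your step (iii) is moot: normality is a property of the polytope and its lattice, independent of which term order one puts on the fiber product. What the paper's longer route buys is the explicit regionwise description of the fiber product (standard regions are fiber products of standard regions), which it reuses immediately for the relation bound in Proposition~\ref{flagrel}; your route buys economy and a weaker hypothesis for the normality statement itself.
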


For $(P, \Sigma)$ we call a binomial $x^{\bold{n}} - x^{\bold{m}} \in I_P$ a standard relation
if both monomials are composed of lattice points from the same standard region. 

\begin{proposition}\label{flagrel}
Let $(P_1, \Sigma_1)\times_{(Q, \Sigma)}(P_2, \Sigma_2)$ be a fiber product of flag objects
over $(Q, \Sigma)$ with unique standard monomials.  If the standard relations of $(P_i, \Sigma_i)$
are generated by standard relations of degree $k_i,$ then $I_{(P_1, \Sigma_1)\times_{(Q, \Sigma)}(P_2, \Sigma_2)}$
is generated in degree $max\{k_1, k_2\}.$
\end{proposition}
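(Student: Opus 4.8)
The plan is to combine the description of standard monomials in the fiber product (from the proof of Proposition \ref{flagfiber}, or rather its setup) with a degree-reduction argument that rewrites an arbitrary binomial relation into a combination of standard relations. Write $P = (P_1, \Sigma_1)\times_{(Q,\Sigma)}(P_2,\Sigma_2)$, with $\Sigma$ the induced term order, and recall that the standard regions of $P$ are (by the flag property, established in Proposition \ref{flagfiber}) exactly the products of compatible standard regions of $P_1$ and $P_2$ glued along a common standard region of $Q$; equivalently, a monomial $x^{\mathbf m}$ in $\C[X_P]$ is standard iff its images $(\pi_1)_*x^{\mathbf m}$ and $(\pi_2)_*x^{\mathbf m}$ are standard in $\C[X_{P_1}]$ and $\C[X_{P_2}]$ respectively, and these pullbacks assemble consistently. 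The toric ideal $I_P$ is spanned by binomials $x^{\mathbf n} - x^{\mathbf m}$ with $\mathbf n, \mathbf m \in \Delta_b$ for some $b \in NP$. Since $(Q,\Sigma)$ has unique standard monomials, for any $b$ there is a \emph{unique} standard monomial $x^{\mathbf s(b)}$ in $\Delta_b$, and any relation $x^{\mathbf n} - x^{\mathbf m}$ is the difference $(x^{\mathbf n} - x^{\mathbf s(b)}) - (x^{\mathbf m} - x^{\mathbf s(b)})$; so it suffices to show each binomial $x^{\mathbf n} - x^{\mathbf s(b)}$, from a (possibly non-standard) monomial to \emph{the} standard monomial in its fiber, lies in the ideal generated by standard relations of degree $\le \max\{k_1,k_2\}$.

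The key step is an induction on the position of $x^{\mathbf n}$ in the term order $\Sigma$ (i.e.\ on $\Sigma(\mathbf n)$ among monomials of fixed degree). If $x^{\mathbf n}$ is itself standard, there is nothing to prove. Otherwise $x^{\mathbf n}$ is an initial form of some $f \in I_P$, so we may pick a degree-two divisor $x_i x_j \mid x^{\mathbf n}$ that is \emph{not} standard — here is where the flag property of $P$ is essential: non-standardness of $x^{\mathbf n}$ forces non-standardness of some quadratic divisor. Project to $P_1$ and $P_2$: the images of $x_i x_j$ land in $\Delta_{b'}$ for the appropriate $b'$, and because the $(P_i,\Sigma_i)$ are flag, at least one of the two images, say in $P_1$, is itself non-standard. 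Now I use the hypothesis that the standard relations of $(P_1,\Sigma_1)$ are generated in degree $k_1$: the element $(\pi_1)_*(x_ix_j)$ can be rewritten, modulo standard relations of $P_1$ of degree $\le k_1$, as a combination of strictly $\Sigma_1$-smaller monomials; I lift such a rewriting to a standard relation of $P$ of degree $\le k_1$ supported on $x_ix_j$ times an appropriate standard complement, using that lattice points in the fiber product are precisely matched pairs and that a standard region of $P$ maps onto standard regions of the $P_i$. Applying this substitution to $x^{\mathbf n}$ replaces it, modulo the target ideal, by a sum of monomials $x^{\mathbf n'}$ with $\Sigma(\mathbf n') < \Sigma(\mathbf n)$ and the same image $b$; by the inductive hypothesis each such $x^{\mathbf n'} - x^{\mathbf s(b)}$ lies in the target ideal, hence so does $x^{\mathbf n} - x^{\mathbf s(b)}$.

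The main obstacle — and the step I would write most carefully — is the lifting of a degree-$k_i$ standard relation on $P_i$ to a degree-$\max\{k_1,k_2\}$ standard relation on the fiber product while keeping both sides supported in a single standard region of $P$ and compatible with the other factor. The subtlety is that a standard monomial on $P_1$ must be completed to a standard monomial on $P$ by choosing a matching standard monomial on $P_2$ over the same point of $Q$; this uses unique standard monomials on $(Q,\Sigma)$ (so the completion is forced and well-defined) together with the flag property on $P_2$ (so that the completion is again standard and the quadratic-divisor criterion is preserved). Once this gluing lemma is in place, the induction on $\Sigma$ closes routinely, and the bound $\max\{k_1,k_2\}$ is exactly what the two one-sided rewriting steps produce. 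I would also note explicitly that degree is preserved throughout — every rewriting is by a homogeneous binomial — so the resulting generators of $I_P$ genuinely have degree $\le \max\{k_1,k_2\}$.
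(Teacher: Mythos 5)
Your opening reduction contains the essential gap: from the hypothesis that $(Q,\Sigma)$ has unique standard monomials you conclude that the fiber product itself has a \emph{unique} standard monomial $x^{\mathbf{s}(b)}$ in each fiber $\Delta_b$. That does not follow and is false in general. By Proposition \ref{fiber}, a standard monomial of the fiber product is a matched pair of standard monomials of $P_1$ and $P_2$, and the factors are only assumed flag, so they (and hence the fiber product) may have many standard monomials over the same lattice point; this is the typical situation for $\Sigma^2$ on balanced polytopes, where balanced factorizations are far from unique. (Take $Q$ and $P_2$ to be points: the fiber product is $P_1$, and your claim would say every flag term order has unique standard monomials.) Once the false uniqueness claim is removed, your induction only shows that every monomial can be brought to \emph{some} standard monomial by degree-$2$ moves --- which is Lemma \ref{d2} together with the flag property of the fiber product (Proposition \ref{flagfiber}) --- and it never treats binomials whose two sides are \emph{distinct standard} monomials over the same point. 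Those standard relations are exactly the content of the proposition, and they are where the degree hypotheses on $k_1, k_2$ must enter; tellingly, the one place you invoke the degree-$k_1$ hypothesis (rewriting a non-standard quadratic divisor $x_ix_j$) does not need it, since replacing a non-standard degree-$2$ monomial by a standard one in its fiber is itself a degree-$2$ binomial of $I_P$.

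For comparison, the paper's argument runs: by the flag property, degree-$2$ relations reduce any binomial of $I_P$ to a relation between standard monomials supported in a single maximal facet of $\Delta_{\Sigma_1\oplus\Sigma_2}(P_1\times_Q P_2)$; such a standard relation projects to relations among standard monomials of $P_1$ and of $P_2$; and each generating standard relation of $P_1$ (degree $\leq k_1$), respectively $P_2$ (degree $\leq k_2$), lifts to a relation of the fiber product because the unique standard monomial property of $(Q,\Sigma)$ forces the $Q$-components of the two sides to agree, so the $P_2$-components (respectively $P_1$-components) can be carried along unchanged. Composing these lifted moves converts one side of the standard relation into the other, giving generation in degree $\max\{k_1,k_2\}$. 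This lifting step --- applied to relations between standard monomials, not to the reduction of a non-standard monomial --- is what your proof would need and currently lacks.
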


Properties of fiber products over a toric algebra have been
studied before by Sullivant in \cite{Su}.  These theorems can be viewed
as refinements of Theorem 2.8 in \cite{Su}.  In our results, the assumption
that the base of the fiber product have strict unique factorization has been removed
by restricting our study to fiber products of binomial ideals.

\subsection{The concatenation product on term orders}

 The second technical result, used to prove theorem \ref{polyquad}, is a version of Sullivant's 
Theorem 2.9 in \cite{Su}.  Here we also remove a restriction that the base of the fiber product
have unique factorization by only considering binomial ideals.  We define another operation in $\mathcal{P}.$ 

\begin{definition}
Define $(P_1, \Sigma_1) \boxtimes_{(Q, \Sigma)} (P_2, \Sigma_2)$ in $\mathcal{P}$
on the fiber product polytope $P_1\times_Q P_2$ with term order given 
by the following rule on monomials.  For lattice points $(V_1, W_1), (V_2, W_2) \in (P_1, \Sigma_1) \boxtimes_{(Q, \Sigma)} (P_2, \Sigma_2)$ 
we say $(V_1, W_1) < (V_2, W_2)$ if 

\begin{enumerate}
\item $\Sigma_1(V_1) < \Sigma_1(V_2)$ or
\item the $\Sigma_1$ values are equal, and $\Sigma_2(W_1) < \Sigma_2(W_2).$ 
\end{enumerate}

\noindent
For monomials $x^{(V_1, W_1)\ldots(V_k, V_k)}, x^{(U_1, Y_1)\ldots(U_{\ell}, Y_{\ell})}$ we first order by degree, then with the sum weighting $\Sigma_1 \oplus \Sigma_2,$ we then break ties by writing both sets of exponents in decreasing order with the above ordering and we declare  $(V_1, W_1)\ldots (V_k, W_k) > (U_1, Y_1)\ldots(U_{\ell}, Y_{\ell}),$ if $(V_i, W_i) > (U_i, Y_i)$ at the first place they differ.
\end{definition}

\begin{proposition}\label{quadgrob}
 If the presenting ideals of $(P_1, \Sigma_1),$ $(P_2, \Sigma_2),$ and  $(Q, \Sigma)$ all have quadratic, square-free Gr\"obner
bases with respect to their term orders, then so does the presenting ideal of

 $(P_1, \Sigma_1) \boxtimes_{(Q, \Sigma)} (P_2, \Sigma_2).$
\end{proposition}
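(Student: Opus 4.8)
The plan is to follow the proof of Theorem 2.9 of \cite{Su}, carrying out each step for binomial ideals; this is exactly what lets us trade the strict unique factorization hypothesis on the base for the mere existence of a quadratic square-free Gr\"obner basis of $I_Q$. Write $R=(P_1,\Sigma_1)\boxtimes_{(Q,\Sigma)}(P_2,\Sigma_2)$ and recall that the lattice points of $P_1\times_Q P_2$ are the pairs $(V,W)$ with $\pi_1(V)=\pi_2(W)=:q(V,W)\in Q$, so $\C[R]$ has one generator $x^{(V,W)}$ for each such pair. First I would record a generating set for the toric ideal $I_{P_1\times_Q P_2}$ of the usual toric fiber product shape, namely the binomial case of Theorem 2.8 of \cite{Su}: it is generated by (i) the quadratic \emph{fiber binomials} $x^{(V_1,W_1)}x^{(V_2,W_2)}-x^{(V_1,W_2)}x^{(V_2,W_1)}$, one for each quadruple with $q(V_1,W_1)=q(V_2,W_2)$; (ii) \emph{lifts} of the quadratic square-free Gr\"obner basis $G_1$ of $I_{P_1}$; and (iii) \emph{lifts} of the quadratic square-free Gr\"obner basis $G_2$ of $I_{P_2}$, where a lift of $x^{V_1}x^{V_2}-x^{V_1'}x^{V_2'}\in I_{P_1}$ is a binomial $x^{(V_1,W_1)}x^{(V_2,W_2)}-x^{(V_1',W_1')}x^{(V_2',W_2')}$ with $W_1+W_2=W_1'+W_2'$. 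That such lifts exist and that the three families generate is the content of Theorem 2.8 of \cite{Su}, whose proof uses only that $I_Q$ is binomial with the prescribed Gr\"obner basis: the base relations are what reconcile the two base multidegrees appearing in an $I_{P_1}$- or $I_{P_2}$-relation, and they take over the role of unique factorization of the base.

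Next I would compute the initial form of each generator under the concatenation order. As $I_{P_1\times_Q P_2}$ is homogeneous, the two monomials of any such binomial have equal degree, so the comparison is decided by the sum weighting $\Sigma_1\oplus\Sigma_2$ and then by the lexicographic tie-break on sorted exponent lists. For a lift $\widetilde g$ of $g\in G_1$ with $\mathrm{in}_{\Sigma_1}(g)=x^{V_1}x^{V_2}$, the constraint $W_1+W_2=W_1'+W_2'$ and additivity of $\Sigma_2$ force the two monomials of $\widetilde g$ to carry the same total $\Sigma_2$-weight on their second coordinates, so the $\Sigma_1\oplus\Sigma_2$ comparison reduces to the $\Sigma_1$ comparison of first coordinates and $\mathrm{in}(\widetilde g)=x^{(V_1,W_1)}x^{(V_2,W_2)}$ is a lift of $\mathrm{in}_{\Sigma_1}(g)$, again quadratic and square-free; lifts of $G_2$ are symmetric. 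For a fiber binomial both monomials have equal $\Sigma_1\oplus\Sigma_2$-weight, and the lexicographic rule selects the monomial pairing the $\Sigma_1$-largest first coordinate with the $\Sigma_2$-largest second coordinate; restricted to the variables lying over a fixed $q$, the fiber binomials are precisely the $2\times 2$ minor relations of a Segre product with its sorting term order, whose initial monomials are square-free quadrics. Hence the candidate basis $\mathcal{G}$, consisting of the fiber binomials together with the chosen lifts of $G_1$ and $G_2$, has initial ideal generated by square-free quadrics; it remains to check that $\mathcal{G}$ is a Gr\"obner basis.

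To finish I would verify Buchberger's criterion, reducing every $S$-pair in $\mathcal{G}$ to zero modulo $\mathcal{G}$, following the case analysis of \cite{Su}. Two fiber binomials whose initial monomials share a variable must lie over a common $q$ (otherwise those monomials are relatively prime), and there they are the minor relations of a single Segre product, whose $S$-pairs reduce in the classical way. The $S$-pair of two lifts of elements of $G_1$ projects on first coordinates to an $S$-pair of $G_1$, which reduces to zero since $G_1$ is a Gr\"obner basis; I would lift that reduction step by step, at each stage correcting second coordinates by a fiber binomial so as to stay within $\mathcal{G}$, and symmetrically for lifts of $G_2$. A mixed $S$-pair between a lift of $g\in G_1$ and a lift of $h\in G_2$ has its two initial monomials governed respectively by the $\Sigma_1$-order on first coordinates and the $\Sigma_2$-order on second coordinates; since rearranging first coordinates and rearranging second coordinates differ only by fiber binomials, interpolating with fiber binomials reduces the $S$-pair, and the remaining $S$-pairs between a lift and a fiber binomial are handled the same way. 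Granting these reductions, Buchberger's criterion shows that $\mathcal{G}$ is a Gr\"obner basis of $I_{P_1\times_Q P_2}$ for the concatenation order, and by the previous paragraph its initial ideal is generated by square-free quadrics, which is the assertion of the proposition.

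The step I expect to be the main obstacle is the reduction of the mixed and lift-versus-fiber $S$-pairs, equivalently the bookkeeping involved in lifting a reduction from $\C[P_1]$ or $\C[P_2]$ back into $\C[R]$: one must simultaneously control the $\Sigma_1$-lexicographic effect of a $P_1$-relation on first coordinates and the $\Sigma_2$-effect of a $P_2$-relation or fiber swap on second coordinates through the concatenated tie-break, and guarantee that the needed second-coordinate corrections can always be realized. This is precisely where the quadratic square-free Gr\"obner basis of the base $I_Q$ gets used, playing the role that unique factorization of the base plays in \cite{Su}.
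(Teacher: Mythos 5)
There is a genuine gap, and it sits exactly at the point you flag as "the main obstacle." Your plan imports the generation statement of Theorem 2.8 of \cite{Su} (fiber binomials plus lifts of $G_1$ and $G_2$) and asserts that its proof "uses only that $I_Q$ is binomial with the prescribed Gr\"obner basis." That is not so: Sullivant's argument uses the linear independence of the base degrees, i.e.\ unique factorization in the base, to match the two sides of a relation variable-by-variable over $Q$. That is precisely the hypothesis being dropped in Proposition \ref{quadgrob}. When $I_Q \neq 0$, an element of $G_1$ can change the \emph{multiset} of base degrees (e.g.\ $\{0,2\}$ versus $\{1,1\}$ over an interval), so lifting it requires producing new second coordinates in $P_2$ with a prescribed sum over the new base degrees; such lifts need not exist pairwise, and it is not argued that your three families generate $I_{P_1\times_Q P_2}$. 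The paper sidesteps this entirely by localizing: the quadratic square-free hypothesis on $(Q,\Sigma)$ means the standard regions of $Q$ are unit simplices, the fiber product is flag (Propositions \ref{flagfiber}, \ref{flagprod}), Lemma \ref{d2} reduces every monomial by degree-$2$ moves into a single standard region $S_1\times_{S}S_2$, and \emph{there} the base is a simplex with unique factorization, so the Segre/sorting argument with the $\boxtimes$ order applies region by region. Your global Buchberger strategy never recovers this localization, and the mixed $S$-pair reductions you defer ("granting these reductions") are exactly the content that is missing.

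A second, independent error is your initial-form computation for lifts: you argue that $W_1+W_2=W_1'+W_2'$ together with "additivity of $\Sigma_2$" forces the two monomials of a lift to have equal total $\Sigma_2$-weight. But $\Sigma_2$ is an arbitrary weighting of lattice points, not a linear functional on the lattice; the weight of a monomial is $\sum_i\Sigma_2(W_i)$, which is not determined by $\sum_i W_i$. Indeed for $\Sigma^2$ the whole mechanism of the paper is that balancing preserves the sum while strictly lowering the weight. Consequently $\mathrm{in}(\widetilde g)$ need not lie over $\mathrm{in}_{\Sigma_1}(g)$, so the claim that your candidate set has square-free quadratic initial terms, and the subsequent $S$-pair analysis built on it, do not go through as stated. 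To repair the argument you would essentially have to restrict attention to lifts supported in a single standard region (where the second coordinates can be re-paired rather than replaced), which brings you back to the paper's reduction to fiber products of unit simplices over a unit simplex.
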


Using this, we establish Theorem \ref{polyquad} from the fact that the presenting ideals of intervals $[0, L]$ and the building block polytopes have quadratic, square-free Gr\"obner bases. Note that although we do not assume that the base of our fiber products has unique factorization, we do assume this "locally" by requiring the polytope to have a quadratic square-free Gr\"obner basis.

\subsection{Acknowledgements}

We would like to thank Weronika Buczynska, Anton Dochtermann, Alex Engstr\"om, Ben Howard, Raman Sanyal, and Seth Sullivant for
useful conversations related to this project, and David Eisenbud for his helpful explanations of the behavior of graded algebras
in flat families.   We also thank the reviewer, who pointed us to the work of Abe \cite{A}, the result of which was inspiration for 
stronger results than we originally appeared in this paper.


\section{The category $\mathcal{P},$ and flag term orders}

In this section we review some basic properties of term orders on toric
ideals.  We discuss geometric decompositions of the polytope induced by a term order, and the corresponding algebra
We cover some properties of the category $\mathcal{P},$ flag term orders, 
and balanced polytopes.   First, we recall the initial complex $\Delta_{\Sigma}(P)$ of a term order, see chapter $8$ of \cite{St}.

\begin{definition}
Let $P$ be a lattice polytope, and $\Sigma$ a term order on $P.$
Define the simplicial complex $\Delta_{\Sigma}(P)$ on the vertex set
$X_P$ as follows.  A set $S \subset X_P$ defines a face of
$\Delta_{\Sigma}(P)$ if every monomial with support $S$
is standard. 
\end{definition}

If $S \in \Delta_{\Sigma}(P)$ and $F \subset S$ has a monomial $x^{\bold{m}}$ supported
on its entries which is not standard then one can make a non-standard monomial
supported on $S$ by multiplying by the remaining generators in $S \setminus F,$
this establishes that $\Delta_{\Sigma}(P)$ is a simplicial complex. Morphisms in the category $\mathcal{P}$ preserve the information in these simplicial complexes. 

\begin{proposition}
Let $\pi: (P, \Sigma) \to (Q, \Gamma)$ be a morphism in $\mathcal{P}$
then $\pi$ induces a simplicial map $\pi_*: \Delta_{\Sigma}(P) \to \Delta_{\Gamma}(Q).$
\end{proposition}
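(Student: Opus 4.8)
The plan is to show that a morphism $\pi: (P,\Sigma) \to (Q,\Gamma)$ in $\mathcal{P}$ sends faces of $\Delta_\Sigma(P)$ to faces of $\Delta_\Gamma(Q)$; the induced vertex map is of course just the restriction of the lattice map $\pi$ to $X_P$, which lands in $X_Q$ since $\pi$ takes lattice points to lattice points and $P$ into $Q$. So the only thing to check is the simplicial-map condition: if $S \subset X_P$ is a face of $\Delta_\Sigma(P)$, then $\pi(S) \subset X_Q$ is a face of $\Delta_\Gamma(Q)$.

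First I would unwind the definition. Let $S = \{x_1,\dots,x_k\}$ be a face of $\Delta_\Sigma(P)$, so every monomial supported on $S$ is standard with respect to $\Sigma$. I must show every monomial supported on $\pi(S) = \{\pi(x_1),\dots,\pi(x_k)\}$ is standard with respect to $\Gamma$. Pick such a monomial $y^{\mathbf{m}}$, with support contained in $\pi(S)$; write it as a product $\prod_i \pi(x_i)^{m_i}$ (choosing, for each element of $\pi(S)$, a preimage among the $x_i$ — if $\pi$ is not injective on $S$ this just means we distribute the exponent arbitrarily among the preimages, which is harmless). Then $y^{\mathbf{m}}$ is the image under the ring map $\C[X_P] \to \C[X_Q]$ of the monomial $x^{\mathbf{m}} = \prod_i x_i^{m_i}$, whose support lies in $S$. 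Since $S$ is a face, $x^{\mathbf{m}}$ is standard with respect to $\Sigma$. By the defining property of a morphism in $\mathcal{P}$ — that standard monomials of $\Sigma$ are sent to standard monomials of $\Gamma$ — the image of $x^{\mathbf{m}}$, which is exactly $y^{\mathbf{m}}$, is standard with respect to $\Gamma$. Since $y^{\mathbf{m}}$ was an arbitrary monomial supported on $\pi(S)$, the set $\pi(S)$ is a face of $\Delta_\Gamma(Q)$, and this is precisely the statement that $\pi_* : \Delta_\Sigma(P) \to \Delta_\Gamma(Q)$ is a simplicial map.

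The one genuinely routine point to dispatch is that the linear map $\pi$ restricted to $X_P$ really does define the vertex map of a simplicial map — i.e. singletons go to singletons, which is automatic, and the face condition above handles the rest. I would also remark that $\pi_*$ being simplicial is compatible with the projections $\C[X_P] \to \C[P]$ in the expected way, though that is not needed for the statement. There is no serious obstacle here; the proposition is essentially a translation of the definition of a morphism in $\mathcal{P}$ into the language of the initial complexes, and the only subtlety — non-injectivity of $\pi$ on the vertex set of a face — is resolved by noting that pushing forward a monomial along a ring homomorphism commutes with taking support, so standardness transfers along $\pi$ exactly as required.
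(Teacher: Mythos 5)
Your argument is correct and is essentially the paper's own proof: take a monomial supported on $\pi(S)$, lift it to a (standard) monomial supported in $S$, and push it forward using the defining property that morphisms in $\mathcal{P}$ send standard monomials to standard monomials. Your explicit handling of possible non-injectivity of $\pi$ on $S$ (distributing exponents among preimages, with subsets of faces still being faces) is a minor refinement of the same argument, not a different route.
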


\begin{proof}
Let $S$ define a face in $\Delta_{\Sigma}(P).$
Any monomial $x^{\bold{m}}$ with support $S$ 
is standard, implying its image $\pi(x)^{\bold{m}}$ is standard with respect
to $\Gamma$ in $Q.$ Since this holds for any monomial, all monomials
with support $\pi_*(S) = \{\pi(x_i) | x_i \in S\}$ must be standard, 
which means $\pi_*(S) \in \Delta_{\Gamma}(Q).$
\end{proof}

For flag term orders $\Sigma,$ we will be concerned with the convex hulls $|S| \subset P$
of the faces of $\Delta_{\Sigma}(P).$  Let $\Sigma$ be a flag term order, and let $b \in P$
be some rational point.  Then $Nb$ is a lattice point of $NP$ for some $N >> 0.$
Let $x^{\bold{m}}$ be a standard monomial which maps to $b.$   Then $b$ is in the convex
hull of the lattice points defined by the $x_i,$ and by the flag property, this collection defines
a face $S \in \Delta_{\Sigma}(P).$  Similarly, if $b \in |S|$ is any rational point, then we have 

\begin{equation}
b = \sum_{x_i \in S} s_ix_i\\
\end{equation}

\noindent
for rational $s_i.$  This implies for some $N >> 0$ that the monomial defined by $\sum_{x_i \in S} (Ns_i)x_i = Nb$
is standard.  If $b \in |S|$ is a lattice point, this implies $S \cup \{b\}$ is also in $\Delta_{\Sigma}(P)$
as $Nb$ is defines a standard monomial, implying $(N+1)b = \sum_{x_i \in S} Ns_i x_i + b$ defines a standard monomial as well. 
The following proposition shows that the interiors of maximal faces of $\Delta_{\Sigma}(P)$ do not intersect.

\begin{proposition}
Let $(P, \Sigma)$ be flag, and let $S, T \subset \Delta_{\Sigma}(P),$ and suppose the intersection
$int|S| \cap int|T|$ is non-empty.  Then $S \cup T$ is also in $\Delta_{\Sigma}(P).$  
\end{proposition}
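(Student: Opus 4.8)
The plan is to reduce the statement to producing a single standard monomial whose support is exactly $S \cup T$, and then to let the flag property upgrade this to "$S\cup T$ is a face." First I would dispose of the trivial case $S = T$ and then pick a \emph{rational} point $b \in \mathrm{int}|S| \cap \mathrm{int}|T|$; such a point exists because, by hypothesis, this intersection is a non-empty relatively open subset of the rational affine subspace $\mathrm{aff}|S| \cap \mathrm{aff}|T|$. Since $b$ lies in the relative interior of each convex hull, I can write $b = \sum_{x_i \in S} s_i x_i = \sum_{x_j \in T} t_j x_j$ with all $s_i, t_j$ strictly positive rationals summing to $1$. Clearing denominators by a common integer $N$ gives monomials $M_S = \prod_{x_i \in S} x_i^{Ns_i}$ and $M_T = \prod_{x_j \in T} x_j^{Nt_j}$ of degree $N$, with support exactly $S$ and exactly $T$, both mapping to the lattice point $Nb$ of $NP$. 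Because $S$ and $T$ are faces of $\Delta_\Sigma(P)$, every monomial supported on $S$ (resp. $T$) is standard; in particular $M_S$, $M_T$, and all their powers are standard.

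The key step — and the one I expect to be the main obstacle — is to show that the product $M_S M_T$ is itself standard, i.e.\ minimal in its fiber $\Delta_{2Nb}$. The difficulty is that "$M_S M_T$ standard'' is more or less equivalent to the conclusion, so one must break a circularity. The device is to pass to a higher fiber where an \emph{a priori} standard monomial lives. Concretely: since $M_S$ and $M_T$ are standard and both lie in $\Delta_{Nb}$, minimality forces $\Sigma(M_T) \le \Sigma(M_S)$. Suppose $M_S M_T$ were not standard, so that $\mu < M_S M_T$ for some $\mu \in \Delta_{2Nb}$. As the term order is compatible with multiplication, $\mu M_S < M_S^2 M_T$ inside $\Delta_{3Nb}$; and since $\Sigma(M_S^2 M_T) = 2\Sigma(M_S) + \Sigma(M_T) \le 3\Sigma(M_S) = \Sigma(M_S^3)$ with $M_S^2M_T$ and $M_S^3$ of equal degree, we get $\mu M_S < M_S^3$. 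This contradicts the fact that $M_S^3$, being supported on the face $S$, is standard. Hence $M_S M_T$ is standard.

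To finish, I would feed this back through the flag hypothesis. The monomial $M_S M_T$ is standard with support exactly $S\cup T$, so by the flag property every degree-$2$ divisor $x_ix_j$ with $x_i,x_j \in S\cup T$ is standard; the divisors lying entirely in $S$ or entirely in $T$ are standard because $S,T$ are faces, and each $x^n$ is standard since $\Sigma$ is flag. Thus every monomial supported on $S\cup T$ has all its degree-$2$ divisors standard, and the flag property then says every such monomial is standard, which is exactly the assertion $S\cup T \in \Delta_\Sigma(P)$.

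Aside from the circularity-breaking step, everything is routine bookkeeping: the existence of a rational point in the intersection, the fact that the term order is multiplicative, and the convention that all monomials in a single fiber $\Delta_b$ share a common degree (so that the degree comparisons above are legitimate). I would state the rational-point claim and the multiplicativity of the order as one-line remarks rather than belabor them.
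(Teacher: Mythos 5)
Your proof is correct and follows essentially the same route as the paper: take a rational point of $\mathrm{int}|S|\cap\mathrm{int}|T|$, scale to get standard monomials $M_S, M_T$ over $Nb$, show $M_SM_T$ is standard by comparing $\Sigma$-weights against a monomial supported on a single face, and then invoke the flag property to conclude $S\cup T\in\Delta_\Sigma(P)$. Your detour through $\Delta_{3Nb}$ (comparing with $M_S^3$) is a minor variant of the paper's direct comparison of the two expressions for $2Nb$ with $M_S^2$, and you merely make explicit steps the paper leaves terse.
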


\begin{proof}
Let $b \in int|S| \cap int|T|,$ be a rational point, this implies that $b$ can be written as convex sums in both sets $S$ and $T.$

\begin{equation}
b = \sum_{x_i \in S} r_ix_i = \sum_{y_j \in T}t_iy_i\\ 
\end{equation} 

\noindent
In particular the numbers $t_j$ and $s_i$ can be taken non-zero for all $i, j.$
Choose $N >>0$ so that $Ns_i$ and $Nt_j$ are integers for all $i, j,$ then 
we have found monomial expressions for $Nb \in NP$ with support in $S$ and $T$
respectively.  This implies that these monomials are both standard, and further implies
that the following expressions correspond to standard monomials. 

\begin{equation}
2Nb = \sum_{x_i \in S} (2Nr_i)x_i = \sum_{x_i \in S} (Nr_i)x_i + \sum_{y_j \in T} (Nt_i)y_i\\  
\end{equation}

\noindent
So the set $S \cup T$ is also in $\Delta_{\Sigma}(P).$
\end{proof}
\noindent
Taken together, these observations imply the following. 

\begin{proposition}\label{subd}
Let $(P, \Sigma)$ be flag.  The polytope $P$ is geometrically subdivided into convex regions
$S_i \subset P,$ the convex hulls of the maximal facets of $\Delta_{\Sigma}(P).$ The lattice
points in a region $S_i$ are precisely the members of the corresponding maximal facet. 
\end{proposition}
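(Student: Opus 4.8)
The plan is to derive both assertions from the three facts established in the paragraphs immediately above: that a standard monomial mapping to a point $b$ exhibits $b$ as a point of $|S|$ for some face $S \in \Delta_{\Sigma}(P)$; that a lattice point lying in $|S|$ may be adjoined to $S$ to produce another face; and that two faces whose realizations have intersecting interiors may be merged into a single face. Throughout, $S_i$ will range over the maximal faces of $\Delta_{\Sigma}(P)$, and each region $|S_i|$ is a convex hull, hence convex and closed.

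\textbf{The regions cover $P$.} Let $b \in P$ be rational and choose $N \gg 0$ so that $Nb$ is a lattice point of $NP$. The fiber $\Delta_{Nb}$ is nonempty, and its $\Sigma$-minimal elements are standard monomials; fix such a standard monomial $x^{\mathbf m}$ mapping to $Nb$. By the first fact recalled above, the lattice points occurring in $x^{\mathbf m}$ span a face $S \in \Delta_{\Sigma}(P)$ with $Nb \in |S|$, hence $b \in |S| \subseteq |S_i|$ for any maximal face $S_i \supseteq S$. Since the rational points are dense in the lattice polytope $P$ and $\bigcup_i |S_i|$ is a finite union of closed sets, this forces $P = \bigcup_i |S_i|$.

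\textbf{Interiors are disjoint, and lattice points match.} If $int|S_i| \cap int|S_j| \neq \emptyset$ for two maximal faces, the preceding proposition yields $S_i \cup S_j \in \Delta_{\Sigma}(P)$, whence $S_i = S_j$ by maximality. Combined with the covering, this is precisely the assertion that the $|S_i|$ geometrically subdivide $P$ into convex regions. For the final claim, the vertices of $S_i$ are by definition lattice points of $|S_i|$; conversely, if $b \in |S_i|$ is a lattice point, the second fact recalled above gives $S_i \cup \{b\} \in \Delta_{\Sigma}(P)$, so $b \in S_i$ by maximality of $S_i$, and thus the lattice points of $|S_i|$ are exactly the members of $S_i$.

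\textbf{Expected obstacle.} The only delicate point is the covering step, where one must invoke that every nonempty $\Delta_c$ contains a standard monomial — equivalently, that standard monomials span each graded piece of $\C[P]$, which is the content of the statement that the $\Sigma$-minimal elements of $\Delta_c$ are standard — and then pass from density of the rational points of $P$ to the closed finite cover $\bigcup_i |S_i|$. Everything else is a direct application of the three cited lemmas together with the maximality of the $S_i$.
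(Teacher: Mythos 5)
Your argument is correct and is essentially the paper's own: the paper proves this proposition by exactly the three observations you cite (support of a standard monomial is a face containing the point, a lattice point of $|S|$ can be adjoined to $S$, and maximal faces with intersecting interiors merge), assembled with the remark that minimal elements of each $\Delta_b$ are standard. Your only additions are to spell out the density-plus-closedness step in the covering and the maximality bookkeeping, which the paper leaves implicit.
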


For any rational point $b \in S$ a maximal face, we can find some set $\{x_0, \ldots, x_n\} \subset S$ such that $b$ is in the convex hull of $x_0, \ldots, x_n.$  If this set is not equal to $S,$ this can only be because two monomials in some $\Delta_{Nb},$ with $N>>0$ get the same minimial $\Sigma$ weight, so $\Sigma$ cannot be a total term order. Conversely, when $\Sigma$ defines a total term order on monomials, all maximal faces $S \in \Delta_{\Sigma}(P)$ are simplices.  

We will now show that that when $(P, \Sigma)$ is flag, much of the algebra of $\C[P]$ is captured by the algebras $\C[S_i],$ $S_i \in \Delta_{\Sigma}(P).$ 
The following lemma allows us to reduce generation and relation degrees for $\C[P]$ to those of the $\C[S_i].$

\begin{lemma}\label{d2}
For any monomial $a_1\ldots a_N \in P$ where $(P, \Sigma)$ is flag, 
$a_1 \ldots a_N$ is related to a 
standard monomial by degree $2$ relations.
\end{lemma}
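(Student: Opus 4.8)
The plan is to induct on the degree $N$, using the flag property to peel off one factor at a time while only ever multiplying by a single lattice point. Suppose $a_1 \cdots a_N$ is a monomial from $P$ (i.e.\ each $a_i \in X_P$) that is not already standard. By Proposition \ref{subd}, the flag term order subdivides $P$ into the convex regions $S_i$, and a monomial is standard exactly when all of its factors lie in a common region $S_i$. So non-standardness of $a_1 \cdots a_N$ means the points $a_1, \ldots, a_N$ are not all contained in one region. First I would locate the lattice point $b \in NP$ that $a_1\cdots a_N$ represents, and pick a standard monomial $c_1 \cdots c_N$ with the same image $b$ (this exists because each $\Delta_b$ is non-empty and its minimal elements under $\Sigma$ are standard). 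The $c_j$ all lie in one region $S$, and $b$ lies in the convex hull $|S|$.

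The key step is a local move: I claim that from $a_1 \cdots a_N$ one can reach, via a single degree-$2$ relation, a monomial $a_1' \cdots a_N'$ that is "closer" to standard — concretely, one whose factors, when sorted into their regions, are strictly better by some fixed measure of progress (e.g.\ the multiset of $\Sigma$-values, or the number of factors agreeing with a fixed target standard monomial). To produce such a move, consider two factors $a_i, a_j$ lying in distinct maximal regions. Their midpoint $\tfrac12(a_i + a_j)$ is a rational point of $P$; write $a_i + a_j = b_1 + b_2$ where $b_1, b_2 \in X_P$ are chosen to be the two factors of the standard representative of the degree-$2$ monomial $a_i a_j$ (which exists since $\Delta_{a_i+a_j}$ has a standard minimal element). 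Then $a_i a_j - b_1 b_2 \in I_P$ is a degree $2$ relation, and replacing $a_i a_j$ by $b_1 b_2$ inside $a_1\cdots a_N$ takes the monomial to a monomial with strictly smaller $\Sigma$-weight (since $b_1 b_2$ is the $\Sigma$-minimal representative of its fiber and $a_i a_j$ was not, being non-standard). Because $\Sigma$-weight on a fixed fiber $\Delta_b$ takes only finitely many values, iterating this decreases the weight and must terminate; it can only terminate at a $\Sigma$-minimal, hence standard, monomial in $\Delta_b$. That gives the claimed chain of degree $2$ relations from $a_1\cdots a_N$ to a standard monomial.

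The main obstacle is verifying that such a strictly-improving degree-$2$ move always exists as long as the monomial is non-standard, and that the induced monomial stays a monomial \emph{from $P$} (i.e.\ its factors are genuine lattice points of $P$, not just of $NP$) so that the argument can be iterated. The first point reduces to: if $a_1\cdots a_N$ is non-standard then \emph{some} degree-$2$ divisor $a_i a_j$ is non-standard — this is precisely the flag hypothesis. The second point is automatic from how $b_1, b_2$ were selected (they are elements of $X_P$ by construction). One also needs the finiteness of the $\Sigma$-value set on $\Delta_b$ for termination; this holds because $\Delta_b$ is a finite set. Assembling these, the induction closes and the lemma follows.
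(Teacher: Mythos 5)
Your argument is correct and is essentially the paper's own proof: locate a non-standard degree-$2$ divisor (which exists by the flag hypothesis), replace it by the $\Sigma$-minimal standard representative in $X_P$, note the total $\Sigma$-weight strictly drops, and conclude termination from the finiteness of $\Delta_b$, at which point the flag property forces the terminal monomial to be standard. The preliminary discussion of regions and the auxiliary target monomial $c_1\cdots c_N$ are not needed, but they do no harm.
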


\begin{proof}
Let $a_1 + \dots a_N = b \in NP,$ and recall $\Delta_b,$ the set of monomials which map to $b.$ 
If $x^{a_1\ldots a_N}$ is standard, there is nothing to do, so suppose this is not the case, then by
definition of flag term orders, it has a non-standard degree $2$ divisor, say $x^{a_1a_2}.$
We replace $x^{a_1a_2}$ with $x^{A_1A_2}$ standard, then 

\begin{equation}
\Sigma(a_1 a_2 \ldots a_N) > \Sigma(A_1A_2\ldots a_N)\\
\end{equation}

\noindent
We repeat this procedure on $A_1A_2\ldots a_N.$  This algorithm must terminate by the
finiteness of $\Delta_b.$
\end{proof}

\begin{corollary}
For $(P, \Sigma)$ flag with maximal facets $S_1, \ldots S_k$ of $K_{\Sigma},$
any relation $x^{a_1\ldots a_n} = x^{b_1\ldots b_N}$ can be taken to a relation in some $S_i$
by degree $2$ relations. 
\end{corollary}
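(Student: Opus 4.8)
The plan is to push each side of the given relation down to a standard monomial using Lemma~\ref{d2}, to note that the support of any standard monomial is a face of $\Delta_\Sigma(P)$ (the complex $K_\Sigma$), and then to invoke the fact that two faces whose relative interiors both contain the common image point must lie together inside a single maximal facet.

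Concretely, I would write $b = a_1 + \cdots + a_n = b_1 + \cdots + b_N \in NP$ for the common image lattice point (so in particular $n = N$, since $I_P$ is homogeneous). Applying Lemma~\ref{d2} to each side replaces $x^{a_1\cdots a_N}$ by a standard monomial $x^{A_1\cdots A_N}$ and $x^{b_1\cdots b_N}$ by a standard monomial $x^{B_1\cdots B_N}$, both still mapping to $b$; each individual replacement step multiplies a degree-$2$ binomial of $I_P$ by a monomial, so $x^{a_1\cdots a_N} - x^{A_1\cdots A_N}$ and $x^{b_1\cdots b_N} - x^{B_1\cdots B_N}$ lie in the ideal generated by the degree-$2$ part of $I_P$. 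It therefore suffices to show that the standard relation $x^{A_1\cdots A_N} = x^{B_1\cdots B_N}$ is a relation in some $S_i$, because then $x^{a_1\cdots a_n} - x^{b_1\cdots b_N}$ is that relation plus two elements generated by degree-$2$ relations.

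Next I would verify that the support $T$ of any standard monomial $x^{A_1\cdots A_N}$ is a face of $\Delta_\Sigma(P)$: if some monomial with support $T$ were non-standard, the flag property would supply it with a non-standard degree-$2$ divisor $x^{uv}$, $u,v \in T$; but then $x^{uv}$ divides $x^{A_1\cdots A_N}$, and a multiple of a non-standard monomial is non-standard, contradicting standardness. Hence the supports $T_A$ of $x^{A_1\cdots A_N}$ and $T_B$ of $x^{B_1\cdots B_N}$ are faces of $\Delta_\Sigma(P)$. Moreover $\tfrac1N b = \sum_{v \in T_A} (c_v/N)\,v$ with every coefficient $c_v/N$ strictly positive, since each lattice point of the support occurs with positive exponent; thus $\tfrac1N b$ lies in the relative interior of $|T_A|$, and likewise of $|T_B|$. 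By the proposition immediately preceding Proposition~\ref{subd}, $T_A \cup T_B$ is again a face of $\Delta_\Sigma(P)$, so it is contained in some maximal facet $S_i$; by Proposition~\ref{subd} both $x^{A_1\cdots A_N}$ and $x^{B_1\cdots B_N}$ are then supported on lattice points of $S_i$, i.e. $x^{A_1\cdots A_N} = x^{B_1\cdots B_N}$ is a relation in $\C[S_i]$, as desired.

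I do not expect a serious obstacle; the argument is bookkeeping layered on Lemma~\ref{d2} and the two structural propositions about $\Delta_\Sigma(P)$. The one point that needs genuine care is the appeal to the interior-intersection proposition: it requires $\tfrac1N b$ to sit in the \emph{relative interiors} of $|T_A|$ and $|T_B|$, not merely in those polytopes, which is exactly why it matters that in a standard monomial every point of its support appears with positive exponent. A minor secondary point is to recall, via Proposition~\ref{subd}, that a relation supported on any face is automatically supported on the maximal facet containing it, so that ``a relation in some $S_i$'' is the correct final statement.
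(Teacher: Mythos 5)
Your proof is correct and follows essentially the same route as the paper's: reduce both sides to standard monomials via Lemma~\ref{d2}, then use the simplicial structure of $\Delta_\Sigma(P)$ to place both standard factorizations of $b$ in a common maximal facet $S_i$ containing $\tfrac1N b$. The only difference is one of care rather than substance: you make explicit, via the relative-interior intersection proposition, why the two standard monomials land in the \emph{same} $S_i$, a point the paper's proof asserts without elaboration.
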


\begin{proof}
We must have $a_1 + \ldots + a_N = b_1 + \ldots + b_M = b$ for some $b \in NP,$ 
therefore any standard factorization of $b$ must be made from the lattice points
of an $S_i$ which contains $\frac{1}{N}b.$  The previous proposition shows that 
both sides of this relation can be converted to such a standard relation by degree $2$
moves.  
\end{proof}
 
The content of this last corollary is that generating sets of relations for the $S_i,$
along with the degree $2$ relations of the form $x^{a_1 a_2} = x^{A_1 A_2},$ with one side
standard, suffice to generate the binomial ideal $I_P.$

\subsection{Balanced polytopes}

We now define a special term order function $\Sigma^2.$  We will recall the definition of balanced polytopes from \cite{M1}, and show that 
they always have $\Sigma^2$ as a flag term order. 

\begin{definition}
Define $\Sigma^2: \mathbb{Z}^n \to \Z$ to be the function which takes
a vector $(a_1, \ldots, a_n)$ to the sum of the squares of its entries $\sum_{i = 1}^n a_i^2.$
\end{definition}

\begin{example}
To motivate the use of this function, consider a number $b \in \Z_{\geq 0}$ and the set $\Delta_N(b)$ of $N$ tuples $(x_1, \ldots, x_N) \in \Z_{\geq 0}^N$ such that $\sum x_i = b.$  We can partially order these tuples by weighting them with $\Sigma^2.$ For $b = 2$,
the set $\Delta_3(3) = \{ [003], [012], [111]\},$ and these are ordered by $\Sigma^2$ as follows,

\begin{equation}
[111] < [012] < [003].\\
\end{equation}

\end{example}

We call the operation with takes a pair of numbers $(x, y)$ to $(\lfloor \frac{x + y}{2} \rfloor, \lceil \frac{x + y}{2} \rceil)$ "balancing."   
For any pair of numbers $x, y$ we have that $x^2 + y^2 \geq \lfloor \frac{x + y}{2} \rfloor^2 + \lceil \frac{x + y}{2} \rceil^2,$ while 
$x + y = \lfloor \frac{x + y}{2} \rfloor + \lceil \frac{x + y}{2} \rceil.$  For any pair $x_i, x_j$ from the a tuple as above, balancing yields a tuple which still sums to $b,$
with a lower $\Sigma^2$ value. Furthermore, the number $\Sigma x_i^2$ is minimized over the set of $n$ non-negative integers numbers with sum $b$ exactly when any $x_i$ and $x_j$ differ by at most $1.$  This implies that the minimal tuple with respect to $\Sigma^2$ is of the form $(x+1, x+1, \ldots, x, \ldots, x)$ where $b = Nx + M$ with $M$ the number of entries of the form $x + 1.$    We can play this same game with a tuple of vectors  $(\vec{x}_1, \ldots, \vec{x}_N) \subset \Z_{\geq 0}^m$ which sum to a fixed vector $\vec{b} \in \Z^n.$  We call the operation  that takes two vectors $\vec{x}, \vec{y}$ to the vectors with entries the balancing of the pairs of entries of $\vec{x}, \vec{y}$ a $balancing$ of $m$-vectors.  Once again, this operation 
lowers the $\Sigma^2$ value of $(\vec{x}, \vec{y}),$ unless these vectors were already balanced with respect to each other.  

\begin{proposition}\label{balnum}
For any tuple of vectors $\{\vec{x}_1, \ldots, \vec{x}_N\} \subset \Z_{\geq 0}^m$
There is a tuple $\{\vec{b}_1, \ldots, \vec{b}_N\} \subset \Z_{\geq 0}^m$ with the following
properties. 
\begin{enumerate}
\item $\sum \vec{b}_i = \sum \vec{x}_i$\\
\item Each tuple of numbers $\{b_1^i, \ldots, b_M^i\}$ is balanced. 
\item $\Sigma^2(\vec{b}_1, \ldots, \vec{b}_N) \leq \Sigma^2(\vec{x}_1, \ldots, \vec{x}_N)$,\\
with equality only when each tuple of numbers $\{x_1^i, \ldots, x_M^i\}$ is balanced.\\
\item  The $\vec{b}_i$ all lie in a common translate of the unit cube in $\Z_{\geq 0}^m.$
\end{enumerate}
\end{proposition}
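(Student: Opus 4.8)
The plan is to produce the balanced tuple $\{\vec{b}_i\}$ by an explicit iterative procedure and then verify the four listed properties. First I would fix the common sum $\vec{s} = \sum_i \vec{x}_i \in \Z_{\geq 0}^m$. Working one coordinate $j \in \{1, \ldots, m\}$ at a time, the scalars $\{x_j^1, \ldots, x_j^N\}$ are a tuple of non-negative integers with fixed sum $s_j$; by the scalar analysis carried out just before the statement (repeated balancing of pairs lowers $\Sigma^2$ until no two entries differ by more than $1$, and the resulting balanced tuple is unique up to permutation of which entries are the larger value), there is a balanced tuple achieving the minimum of $\sum_i (x_j^i)^2$. The one subtlety is that the balancing must be performed \emph{simultaneously} and compatibly across all coordinates, since property (4) requires the final vectors $\vec{b}_i$ to lie in a single translate of the unit cube: writing $s_j = N q_j + M_j$ with $0 \le M_j < N$, I would choose, for each $j$, a subset $A_j \subseteq \{1, \ldots, N\}$ of size $M_j$ and set $b_j^i = q_j + 1$ if $i \in A_j$ and $b_j^i = q_j$ otherwise. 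Then every $\vec{b}_i$ satisfies $q_j \le b_j^i \le q_j + 1$ in each coordinate, so all the $\vec{b}_i$ lie in the translate $\vec{q} + [0,1]^m$ of the unit cube, giving (4); and $\sum_i b_j^i = N q_j + M_j = s_j$ for each $j$, giving (1). Property (2) is immediate since the coordinates of any two $\vec{b}_i$ differ by at most $1$ in each slot.

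For property (3), I would argue coordinatewise. Since $\Sigma^2(\vec{b}_1, \ldots, \vec{b}_N) = \sum_{j=1}^m \sum_{i=1}^N (b_j^i)^2$ and likewise for the $\vec{x}_i$, it suffices to show for each fixed $j$ that $\sum_i (b_j^i)^2 \le \sum_i (x_j^i)^2$ with equality iff $\{x_j^1, \ldots, x_j^N\}$ is already balanced. This is exactly the scalar fact established in the paragraph preceding the proposition: the balanced tuple with a given sum $s_j$ minimizes $\sum_i (x_j^i)^2$ among all non-negative integer tuples summing to $s_j$, and since any two balanced tuples with the same sum are permutations of one another, the value of $\sum_i (b_j^i)^2$ depends only on $s_j$, not on the chosen sets $A_j$. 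Summing the coordinatewise inequalities over $j$ gives (3); and the global equality holds iff every coordinate tuple was already balanced, which is the stated equality condition. (One may phrase the minimality claim as a short convexity argument: if $x_j^i \ge x_j^k + 2$ for some $i, k$, replacing the pair by the balanced pair keeps the sum fixed and strictly decreases the sum of squares, since $(a-1)^2 + (b+1)^2 < a^2 + b^2$ whenever $a - b \ge 2$; hence a minimizer must be balanced, and iterating reaches a balanced tuple in finitely many steps because $\Sigma^2$ is a strictly decreasing non-negative integer quantity.)

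The main obstacle I anticipate is not any single inequality — each is elementary — but making the coordinatewise decomposition airtight, i.e. confirming that property (4) genuinely forces the "same $\vec{q}$ in every coordinate" structure and that this choice is consistent with simultaneously minimizing each coordinate's sum of squares. The key observation that dissolves this tension is that the minimum of $\sum_i (x_j^i)^2$ over tuples with sum $s_j$ is attained by \emph{every} choice of distinguished set $A_j$ of the correct size $M_j$, so the coordinates do not compete: one is free to pick the $A_j$ independently (indeed one could even take them all equal), and any such choice simultaneously realizes the per-coordinate minimum, satisfies (1) and (2), and keeps all $\vec{b}_i$ inside $\vec{q} + [0,1]^m$, yielding (4). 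Thus the construction decouples completely over the $m$ coordinates, and the proposition follows by assembling the scalar results coordinatewise.
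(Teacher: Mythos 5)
Your proposal is correct, and it reaches the conclusion by a more explicit route than the paper does. The paper's proof is purely algorithmic: starting from $\{\vec{x}_1,\ldots,\vec{x}_N\}$ it repeatedly applies pairwise balancings of vectors, observes that each such move strictly lowers the nonnegative integer quantity $\Sigma^2$ unless the pair is already balanced (so the process terminates), and then reads off properties (1)--(4) from the algorithm: (1) because balancing preserves the total sum, (2) and (3) because a terminal configuration admits no further weight-lowering move, and (4) as a consequence of (2). You instead write the answer down in closed form, $b^i_j \in \{q_j, q_j+1\}$ with $s_j = Nq_j + M_j$, verify (1), (2), (4) by inspection, and isolate the only nontrivial point, property (3), as a coordinatewise scalar minimality statement, proved by the same convexity inequality $(a-1)^2+(b+1)^2 < a^2+b^2$ for $a \ge b+2$ that underlies the paper's balancing moves. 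So the two arguments rest on the same elementary fact, but your direct construction makes (4) and the equality case of (3) completely transparent, and your observation that the coordinates decouple (any choice of the sets $A_j$ of the correct sizes works) is a genuine simplification, valid here because the proposition imposes no constraint beyond nonnegativity. The trade-off is that the paper's pairwise-balancing formulation is the one that transfers to the later arguments about balanced polytopes (e.g.\ the quadrant $Q_1(L)$), where balancings must produce lattice points that remain inside a prescribed polytope and an explicit floor/ceiling formula chosen independently in each coordinate is no longer available; your construction is tailored to the unconstrained ambient lattice $\Z_{\geq 0}^m$.
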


\begin{proof}
We construct a $\{\vec{b}_1, \ldots, \vec{b}_N\}$ by doing pairwise balancings, beginning
with the set $\{\vec{x}_1, \ldots, \vec{x}_N\}.$  Since each operation lowers the $\Sigma^2$ value, this process terminates.  Number $1$ above follows because this algorithm does not change the total sum of the vectors, numbers $2$ and $3$ are consequences of the algorithm and imply number $4.$
\end{proof}

\begin{definition}
We say a lattice polytope $P \subset \R^m$ is balanced if each tuple of lattice points
$b_1, \ldots, b_N \in P$ has a balancing in $X_P,$ the lattice points of $P.$ 
\end{definition}

\begin{proposition}
A polytope $P \subset \R^n$ is balanced if and only if $(P, \Sigma^2)$ is flag, and the convex hulls of the
maximal elements in $\Delta_{\Sigma^2}(P)$ are subpolytopes of translates of the unit $n-$cube. 
\end{proposition}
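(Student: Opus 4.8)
The plan is to prove both directions by exploiting the description of $\Sigma^2$-standard monomials via repeated balancing, which is already available from Proposition \ref{balnum}. For the forward direction, suppose $P$ is balanced. I first want to identify the standard monomials of $(P, \Sigma^2)$: given a tuple $b_1, \ldots, b_N \in X_P$ summing to $b \in NP$, Proposition \ref{balnum} produces a balanced tuple $\vec{c}_1, \ldots, \vec{c}_N$ with the same sum and with $\Sigma^2$ value no larger, and the hypothesis that $P$ is balanced guarantees the $\vec{c}_i$ actually lie in $X_P$. Since balancing strictly decreases $\Sigma^2$ unless the tuple is already balanced, the minimal elements of $\Delta_b$ — i.e.\ the standard monomials mapping to $b$ — are exactly (supported on) balanced tuples of lattice points of $P$, all lying in a common translate of the unit cube. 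To get the flag property I would argue: if $x^{\bold{m}}$ has all degree $2$ divisors standard, then every pair $x_i, x_j$ of its lattice points is already balanced relative to each other (otherwise balancing that pair lowers $\Sigma^2$, contradicting standardness of $x^{x_i x_j}$), so the whole tuple is balanced, hence $x^{\bold{m}}$ is standard; the converse inclusion is immediate since a divisor of a standard monomial is standard. Finally, the maximal faces of $\Delta_{\Sigma^2}(P)$ are, by Proposition \ref{subd}, the lattice points of the convex regions $S_i$, and by the above these are sets of lattice points in a common translate of the unit $n$-cube, so each $|S_i|$ is a subpolytope of a translate of the unit cube.

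For the converse, suppose $(P, \Sigma^2)$ is flag and each maximal convex region $S_i$ of $\Delta_{\Sigma^2}(P)$ sits inside a translate of the unit $n$-cube. Take any tuple $b_1, \ldots, b_N \in X_P$; it sums to some $b \in NP$. By Lemma \ref{d2} the monomial $x^{b_1 \cdots b_N}$ is connected by degree-$2$ relations to a standard monomial $x^{c_1 \cdots c_N}$, and each such degree-$2$ step replaces a pair by another pair with the same sum and strictly smaller $\Sigma^2$ (since one side is standard and hence $\Sigma^2$-minimal among the two monomials mapping to that pair's sum). I claim the resulting standard tuple is a balancing of the original: the $c_i$ all lie in one region $S_i$, hence in a translate of the unit cube, so in each coordinate their entries take only two consecutive values; and because we only performed sum-preserving, $\Sigma^2$-nonincreasing pair moves, the multiset of coordinate values is compatible with a balancing — more carefully, one checks the $c_i$ are balanced with respect to each other (any unbalanced pair could be improved, contradicting standardness), and a balanced tuple summing to $b$ is unique up to reordering, so it coincides with the balancing produced by Proposition \ref{balnum}. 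Hence the balancing of $b_1, \ldots, b_N$ lies in $X_P$, and $P$ is balanced.

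I expect the main obstacle to be the second direction, specifically the step asserting that a standard (hence balanced-with-respect-to-each-other) tuple of lattice points actually equals \emph{the} balancing of Proposition \ref{balnum} — i.e.\ that "pairwise balanced" plus "same sum" pins down the tuple up to permutation. This is the place where one must argue that within a translate of the unit cube the only degree of freedom is how the "$+1$" coordinates are distributed, and that the constraint $\sum c_i = b$ together with pairwise balancedness forces the number of lattice points carrying a $1$ in a given coordinate to be fixed. The cube hypothesis on the regions is exactly what makes this go through, so the crux is to use it carefully rather than just the flag property. The remaining steps — reading off standard monomials via balancing in the forward direction, and the Lemma \ref{d2} reduction in the converse — are routine given the machinery already set up.
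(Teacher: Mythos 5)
Your overall route coincides with the paper's: in the forward direction you identify the $\Sigma^2$-standard monomials with the balanced ones and observe that balancedness of a tuple is detected on its degree-$2$ divisors (which is precisely the flag condition), with the cube statement coming from Proposition \ref{balnum}; in the converse you run the algorithm of Lemma \ref{d2} and use the hypothesis that the standard regions sit in translates of the unit cube. So far, so good, and the forward half of your argument is correct (just remember to note, as the flag definition requires, that pure powers $x^N$ are balanced, hence standard).

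The genuine problem is the step you yourself single out as the crux of the converse: the assertion that ``a balanced tuple summing to $b$ is unique up to reordering.'' This is false. Already in $\R^2$ the tuples $\{(1,1),(0,0)\}$ and $\{(1,0),(0,1)\}$ are both balanced and both sum to $(1,1)$: balancedness pins down, in each coordinate separately, the multiset of values occurring, but not how those values are distributed jointly among the members of the tuple. So you cannot conclude that the standard tuple produced by Lemma \ref{d2} equals \emph{the} output of Proposition \ref{balnum} (whose output is in any case not unique, since it depends on the order of pairwise balancings). Fortunately this identification is not needed: the definition of a balanced polytope only asks that the given tuple admit \emph{some} balanced tuple of lattice points of $P$ with the same sum, and that is all the paper ever uses. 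Since every intermediate monomial in the Lemma \ref{d2} algorithm is supported on $X_P$ and the terminal standard monomial is supported on a single maximal face, hence inside a translate of the unit $n$-cube, any two of its members differ by at most $1$ in every coordinate; so the terminal tuple is automatically balanced with the correct sum, and you are done --- this is exactly the paper's one-line argument. Note also that your parenthetical ``any unbalanced pair could be improved, contradicting standardness'' is circular in this direction: the floor/ceiling balancing of a pair need not consist of lattice points of $P$, so it cannot be compared against standardness; it is the cube containment, not an improvement move, that yields balancedness here.
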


\begin{proof}
If $P$ is balanced, then the balanced monomials of $P$ are clearly
the standard monomials with respect to $\Sigma^2,$ so we verify
that the flag properties are satisfied.  First note that for any $x \in X_P,$ $x^n$ is a balanced monomial.
Furthermore, a monomial $x^{\bold{n}}$ is balanced if and only if the entries in any two generators
dividing $x^{\bold{n}}$ differ by $1$ or $0,$ which is the case if and only if any degree $2$
divisor of $x^{\bold{n}}$ is balanced.  This is exactly the flag condition. 

If $(P, \Sigma^2)$ is flag, and the fundamental regions $S_i$ are all subsets
of translates of the unit cube then the algorithm in lemma \ref{d2} above gives a balancing
of any monomial.  
\end{proof}

\begin{corollary}
Let $P$ be a balanced polytope, then $P$ is normal if and only if each maximal cubical region $C_i$ is normal.  Furthermore, the ideal $I_P$
is generated in degree bounded by the degrees required to generate the ideals $I_{C_i}.$
\end{corollary}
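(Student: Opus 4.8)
The plan is to derive both assertions by specializing the machinery of this subsection to the term order $\Sigma^2$. By the proposition just proved, a balanced polytope $P$ is flag for $\Sigma^2$, and its maximal standard regions $C_1,\dots,C_k$ of Proposition \ref{subd} are lattice subpolytopes of translates of the unit cube; in particular the lattice points of $C_i$ are exactly its vertices, any two of them differ by at most $1$ in each coordinate, and so every monomial supported on a subset of $X_{C_i}$ is balanced, hence standard. Thus $X_{C_i}$ is a maximal face of $\Delta_{\Sigma^2}(P)$, and normality of $C_i$ means precisely that every lattice point of $NC_i$ is a sum of $N$ vertices of $C_i$.

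I would prove normality in two steps. For the backward implication, let $b$ be a lattice point of $NP$; by Proposition \ref{subd} the point $\tfrac1N b$ lies in some region $C_i$, so $b\in NC_i$, and if $C_i$ is normal then $b$ is a sum of $N$ vertices of $C_i\subseteq X_P$, so $\C[P]$ is generated in degree $1$. For the forward implication, fix $i$ and a lattice point $b\in NC_i\subseteq NP$; normality of $P$ yields a monomial $x^{a_1\cdots a_N}$ with $a_j\in X_P$ and $\sum a_j=b$, and Lemma \ref{d2} rewrites it, via degree-$2$ moves that preserve the coordinate sum, into a standard monomial $x^{v_1\cdots v_N}$ with $\sum v_j=b$ and $\{v_1,\dots,v_N\}$ a face of $\Delta_{\Sigma^2}(P)$, hence contained in some $X_{C_j}$. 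Now $\tfrac1N b=\tfrac1N\sum v_j$ lies in $C_j$ and also in $C_i$; if $i\neq j$ it lies on the common face $C_i\cap C_j$ of the subdivision, and being an average with strictly positive weights of the $v_\ell\in C_j$, each $v_\ell$ must lie on that face, hence in $X_{C_i}$. Either way $b$ is a sum of $N$ vertices of $C_i$, so $C_i$ is normal.

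For the statement on $I_P$, I would invoke the corollary following Lemma \ref{d2}, again with $\Sigma=\Sigma^2$: a generating set of $I_P$ is obtained from the degree-$2$ straightening relations $x^{a_1a_2}=x^{A_1A_2}$ (with one side standard) together with generating sets of relations for each region algebra $\C[C_i]$, pulled back along the inclusions $\C[X_{C_i}]\hookrightarrow\C[X_P]$. Hence $I_P$ is generated in degrees at most $\max\{2,d_1,\dots,d_k\}$, where $d_i$ is the maximal degree needed to generate $I_{C_i}$; since a nonzero toric ideal has no linear generators, the degree $2$ is the minimal possible for any nontrivial relation, so this is exactly the asserted bound.

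The step I expect to be the real obstacle is the forward direction of normality — specifically, that the standard monomial produced by Lemma \ref{d2} can be taken with support in the prescribed region $C_i$ rather than merely in \emph{some} region. This rests on the maximal regions of $\Delta_{\Sigma^2}(P)$ forming an honest polyhedral subdivision of $P$ in which any two cells meet along a common face — the substance of Proposition \ref{subd} and the two propositions preceding it — so that a point lying in $C_i$ and in the convex hull of a face $\{v_\ell\}\subseteq C_j$ is forced onto $C_i\cap C_j$ and drags every $v_\ell$ with it. Once that geometric fact is secured, the corollary is a direct unwinding of Lemma \ref{d2}, Proposition \ref{subd}, and its corollary.
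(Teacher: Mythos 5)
Your backward implication and your treatment of the degree bound for $I_P$ are correct and coincide with the argument the paper intends (it states this corollary without a written proof): the maximal regions tile $P$ by Proposition \ref{subd}, so a lattice point $b\in NP$ lies in $NC_i$ for some $i$ and normality of $C_i$ gives degree-one generation; and the corollary to Lemma \ref{d2} reduces every relation, by degree-$2$ straightening moves, to a relation supported in a single region, so generators of the $I_{C_i}$ together with those quadrics generate $I_P$.

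The gap is in your forward implication. You rewrite a factorization of $b\in NC_i$ into a standard one supported in some region $C_j$ and then argue that, if $j\neq i$, the point $\tfrac1N b$ lies on ``the common face $C_i\cap C_j$ of the subdivision,'' dragging the factors into $C_i$. That step presupposes that the regions meet face-to-face, i.e.\ that $C_i\cap C_j$ is a face of $C_j$; you attribute this to Proposition \ref{subd} and the two propositions preceding it, but those only establish that distinct maximal regions have disjoint interiors and that their union is $P$ -- no polyhedral-complex (face-fitting) property is proved anywhere in the paper, so as written this is an unsupported claim, not a citation. Fortunately you do not need it: for $\Sigma^2$, standard means balanced, so in the rewritten factorization $v_1,\dots,v_N$ the $k$-th coordinates of the $v_\ell$ take at most two consecutive integer values whose average is $b_k/N$ (Proposition \ref{balnum}); a short check then shows that every $v_\ell$ lies in \emph{every} integer translate $Q$ of the unit cube containing $\tfrac1N b$, in particular in one containing $C_i$. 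Since all lattice points of $P$ inside a common unit cube support only balanced monomials, $X_P\cap Q$ is a face of $\Delta_{\Sigma^2}(P)$ containing the maximal face underlying $C_i$, hence equals it by maximality, so all $v_\ell\in X_{C_i}$ and $C_i$ is normal. Replacing your appeal to a face-to-face subdivision by this explicit cubical argument closes the gap using only what the paper actually proves.
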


\section{Fiber products of polytopes}

The previous section establishes that balanced polytopes make up a special
subclass of polytopes with a flag term order, which make up a special class
of polytopes with term orders.  In this section we show that each of these classes
is closed under special types of fiber products. The guiding principle here is that fiber products will exist
and be well-behaved when the base has unique factorization properties.  First we present
a useful motivating proposition. 

\begin{proposition}
Let $P_1$ and $P_2$ be normal polytopes with maps $\pi_1$, $\pi_2$ to $Q$ a normal polytope with unique factorization,
then $P_1 \times_Q P_2$ is normal.
\end{proposition}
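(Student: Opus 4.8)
The plan is to show that every lattice point of $N(P_1 \times_Q P_2)$ is a sum of $N$ lattice points of $P_1 \times_Q P_2$. Take a lattice point $(u,v) \in N(P_1\times_Q P_2)$; by definition $u \in NP_1$, $v \in NP_2$, and $\pi_1(u) = \pi_2(v) =: w \in NQ$. Since $P_1$ is normal, write $u = x_1 + \cdots + x_N$ with each $x_i \in X_{P_1}$; since $P_2$ is normal, write $v = y_1 + \cdots + y_N$ with each $y_j \in X_{P_2}$. Applying $\pi_1$ and $\pi_2$ gives two decompositions $w = \pi_1(x_1) + \cdots + \pi_1(x_N) = \pi_2(y_1) + \cdots + \pi_2(y_N)$ of the lattice point $w \in NQ$ into $N$ lattice points of $Q$. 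The key point is that unique factorization in $Q$ forces these two multisets $\{\pi_1(x_i)\}$ and $\{\pi_2(y_j)\}$ to coincide as multisets.

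Given that the multisets agree, I would reorder the $y_j$ so that $\pi_1(x_i) = \pi_2(y_i)$ for each $i$. Then each pair $(x_i, y_i)$ satisfies the fiber-product condition, so $(x_i, y_i) \in X_{P_1\times_Q P_2}$, and $(u,v) = \sum_{i=1}^N (x_i, y_i)$ exhibits $(u,v)$ as a degree-$N$ product. This is exactly normality of $P_1 \times_Q P_2$ (note $P_1 \times_Q P_2$ is already known to be a lattice polytope, since it is defined as the intersection of $P_1 \times P_2$ with the rational subspace $\{\pi_1(x)=\pi_2(y)\}$, and its lattice points are those of $P_1\times P_2$ satisfying that linear condition). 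One should also check the base case $N=1$ and that the empty polytope case is vacuous.

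The main obstacle is making precise the step "unique factorization forces the multisets to coincide." The notion of unique factorization for $Q$ here means: for a lattice point $w \in NQ$, the multiset of $N$ lattice points of $Q$ summing to $w$ is unique — i.e. $\bigcup_N X_{NQ}$ is a free commutative monoid under the natural grading, or at least factorizations of a fixed element at a fixed degree are unique. With that reading the argument is immediate; the only care needed is to confirm that this is indeed the hypothesis intended by the author (it is the natural polytope analogue of unique factorization and is the condition under which fiber products of semigroup algebras behave well, cf. the discussion of Sullivant's results in the introduction). I would state the unique factorization hypothesis explicitly at the start of the proof to avoid ambiguity, then the rest is the bookkeeping above.
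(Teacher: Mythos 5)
Your argument is exactly the paper's proof: decompose each coordinate by normality of $P_1$ and $P_2$, use unique factorization in $Q$ to match the two factorizations of the common image up to reordering, and pair off the factors to get lattice points of $P_1\times_Q P_2$. Your reading of the unique factorization hypothesis (uniqueness of the degree-$N$ factorization of a point of $NQ$ into lattice points of $Q$) is also the one the paper intends, so there is nothing to change.
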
  

\begin{proof}
Let $(b_1, b_2) \in N[P_1 \times_Q P_2],$ with $b = \pi_1(b_1) = \pi_2(b_2) \in NQ.$ By the normality of $P_1$ and $P_2,$
we have $b_1 = x^{\bold{n}}$ and $b_2 = y^{\bold{m}}$ for monomials in the lattice points of $P_1$ and $P_2$ respectively. 
We must also have $\pi_1(x^{\bold{n}}) = \pi_2(y^{\bold{m}}),$ so the the components of these monomials must be the same up
to reordering, this means for every component $x_i$ we can find a $y_i$ a component of $y^{\bold{m}}$ such that $\pi_1(x_i) = \pi_2(y_i).$
This allows us to express $(b_1, b_2)$ as a product of lattice points of $P_1\times_Q P_2.$
\end{proof}

For two elements $(P_1, \Sigma_1), (P_2, \Sigma_2) \in \mathcal{P}$ the product polytope $P_1 \times P_2$ has a natural term order given
by $\Sigma_1 \oplus \Sigma_2(u, v) = \Sigma_1(u) + \Sigma_2(v).$  This order also makes sense on 
a fiber product $P_1 \times_Q P_2 \subset P_1 \times P_2.$  We now prove that the fiber product of $(P_1, \Sigma_1), (P_2, \Sigma_2) \in \mathcal{P}$ over an element $(Q, \Sigma)$ with unique standard monomials is a fiber product object in $\mathcal{P}.$

\begin{proof}[Proof of proposition \ref{fiber}]
We show that a monomial $x^{[a_1, b_1] \ldots [a_k, b_k]}$ is standard with respect
to $\Sigma_1 \oplus \Sigma_2$ if and only if $x^{a_1, \ldots, a_k}$ and $y^{b_1, \ldots, b_k}$
are standard with respect to $\Sigma_1$ and $\Sigma_2$ respectively.  This establishes
that the projection maps to $P_1$ and $P_2$ are morphisms in $\mathcal{P},$ and
that any polytope $(D, \Gamma) \in \mathcal{P}$ with a map to $(Q, \Sigma)$ which factors by $\pi_1$ and $\pi_2$ must have a map to the fiber product. 


If both $x^{a_1, \ldots, a_k}$ and $y^{b_1, \ldots, b_k}$ are standard with respect to $\Sigma_1$
and $\Sigma_2,$ then $x^{[a_1, b_1], \ldots, [a_k, b_k]}$ must be minimal with respect to $\Sigma_1 \oplus \Sigma_2,$ as any alternative factorization induces alternative factorizations of $a_1 +\ldots + a_k$ and $b_1 + \ldots + b_k.$  So we must show that $x^{[a_1 b_1] \ldots [a_k b_k]}$
standard implies that $x^{a_1, \ldots a_k}$ and $y^{b_1 \ldots b_k}$ are standard. 

Suppose  $x^{[a_1, b_1], \ldots, [a_k, b_k]}$ is a monomial in $P_1\times_Q P_2$ with
$x^{a_1 \ldots a_k}$ not standard, we will show that this implies that $x^{[a_1, b_1], \ldots, [a_k, b_k]}$ is not standard. 
Let $x^{A_1 \ldots A_k}$ be a standard monomial for $a_1 + \ldots + a_k,$ and
$y^{B_1 \ldots B_k}$ the same for $b_1 + \ldots + b_k.$  Then $\pi_1(A_1 \ldots A_k) = \pi_2(B_1 \ldots B_k) = \pi_1(a_1 \ldots a_k) = \pi_2(b_1 \ldots b_k)$ and it can be arranged that $\pi_1(A_i) = \pi_2(B_i)$ by the unique standard monomial property of $Q.$  This allows us to
form the fiber product monomial $x^{[A_1 B_1] \ldots [A_k B_k]},$ which must be a member of 
$\Delta_{[a_1 b_1] + \ldots + [a_k b_k]}.$   However, $\Sigma_1 \oplus \Sigma_2([A_1 B_1] \ldots [A_k B_k] = \Sigma_1(A_1 \ldots A_k) + \Sigma_2(B_1 \ldots B_k) < \Sigma_1(a_1 \ldots a_k) + \Sigma_2(b_1 \ldots b_k) = \Sigma_1 \oplus \Sigma_2([a_1 b_1] \ldots [a_k b_k]).$ 
\end{proof}

\noindent
As a corollary we obtain a proof of proposition \ref{flagfiber}.

\begin{proof}[Proof of proposition \ref{flagfiber}]
Let $x^{[a_1, b_1] \ldots [a_k, b_k]}$ be a monomial for $P_1\times_Q P_2$ such that
each pair $x^{[a_i, b_i][a_j, b_j]}$ is standard.  By the previous proposition this implies
that both $a_1, \ldots, a_k$ and $b_1, \ldots, b_k$ define standard monomials, which implies
their image is standard as a $Q$ monomial, and therefore $x^{[a_1, b_1] \ldots [a_k, b_k]}$ is standard. 
\end{proof}

\noindent
We also get an expression for the simplicial complex $\Delta_{\Sigma_1 \oplus \Sigma_2}(P_1 \times_Q P_2).$


\begin{proposition}\label{flagprod}
Let $(P_1, \Sigma_1)$ and $(P_2, \Sigma_2)$ be flag, then a maximal facet $S \in \Delta_{\Sigma_1 \oplus \Sigma_2}(P_1 \times_Q P_2).$ is of the form $S_1 \times_Q S_2$ for $S_1 \in \Delta_{\Sigma_1}(P_1)$ and $S_2 \in \Delta_{\Sigma_2}(P_2).$
\end{proposition}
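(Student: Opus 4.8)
The plan is to characterize the maximal faces of $\Delta_{\Sigma_1\oplus\Sigma_2}(P_1\times_Q P_2)$ by exploiting the monomial characterization already established in the proof of Proposition \ref{fiber}: a monomial $x^{[a_1,b_1]\ldots[a_k,b_k]}$ on $P_1\times_Q P_2$ is standard with respect to $\Sigma_1\oplus\Sigma_2$ if and only if $x^{a_1\ldots a_k}$ is standard for $\Sigma_1$ and $y^{b_1\ldots b_k}$ is standard for $\Sigma_2$. Translating this into the language of faces: a set $S\subset X_{P_1\times_Q P_2}$ is a face of $\Delta_{\Sigma_1\oplus\Sigma_2}(P_1\times_Q P_2)$ precisely when its first-coordinate projection $\pi_1(S)$ is a face of $\Delta_{\Sigma_1}(P_1)$ and its second-coordinate projection $\pi_2(S)$ is a face of $\Delta_{\Sigma_2}(P_2)$. (Here I use that every monomial supported on $S$ projects to a monomial supported on $\pi_i(S)$, and conversely any monomial supported on $\pi_1(S)$, $\pi_2(S)$ with matching images over $Q$ lifts to one supported on $S$, using unique standard monomials on $Q$ to align the fibers — this is exactly the mechanism in the proof of Proposition \ref{fiber}.)

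Next I would show that for a \emph{maximal} such face $S$, we may take $S = \pi_1(S)\times_Q \pi_2(S)$, i.e. $S$ contains every lattice point $[a,b]\in X_{P_1\times_Q P_2}$ with $a\in S_1:=\pi_1(S)$ and $b\in S_2:=\pi_2(S)$. Indeed, if $[a,b]$ is such a point, then adjoining it to $S$ keeps the first projection inside $S_1$ (still a face of $\Delta_{\Sigma_1}(P_1)$, since $S_1$ is unchanged) and the second projection inside $S_2$; by the characterization of the previous paragraph, $S\cup\{[a,b]\}$ is again a face, so maximality forces $[a,b]\in S$. Therefore $S = S_1\times_Q S_2$ as sets of lattice points.

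Finally I would verify that $S_1$ and $S_2$ are themselves maximal faces of $\Delta_{\Sigma_1}(P_1)$ and $\Delta_{\Sigma_2}(P_2)$. If, say, $S_1$ were properly contained in a face $S_1'\in\Delta_{\Sigma_1}(P_1)$, then $S_1'\times_Q S_2$ would be a face of $\Delta_{\Sigma_1\oplus\Sigma_2}(P_1\times_Q P_2)$ (again by the monomial characterization, since any monomial supported on it projects to standard monomials on each side) strictly containing $S$, contradicting maximality of $S$. One subtlety to address: I should check that $S_1'\times_Q S_2$ is nonempty and strictly larger, which holds because over each point of $Q$ in the common image the fibers of $S_1'$ and $S_2$ are nonempty, so fixing any new point of $S_1'\setminus S_1$ it pairs with some point of $S_2$ lying over the same $Q$-point. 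The main obstacle I anticipate is precisely this bookkeeping with the fiber structure over $Q$ — ensuring that "maximal on each factor" assembles to "maximal on the fiber product" without gaining or losing lattice points — but the unique standard monomial hypothesis on $(Q,\Sigma)$ is exactly what makes the fibers align consistently, so once the monomial-level statement from Proposition \ref{fiber} is in hand, the argument is a direct translation into faces.
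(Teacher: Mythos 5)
Your proof is correct and follows essentially the same route as the paper: both rest on the characterization from Proposition \ref{fiber} that a monomial on $P_1\times_Q P_2$ is standard exactly when its two projections are, from which one gets that any face of $\Delta_{\Sigma_1\oplus\Sigma_2}(P_1\times_Q P_2)$ sits inside the fiber product of its (face) projections and that this fiber product is itself a face, so maximality forces $S = S_1\times_Q S_2$. Your final paragraph, claiming $S_1$ and $S_2$ are themselves maximal, goes beyond the statement and is not fully justified (a new point of $S_1'\setminus S_1$ need not lie over the image of $S_2$ in $Q$, so $S_1'\times_Q S_2$ may fail to be strictly larger), but since the proposition only asserts $S_1\in\Delta_{\Sigma_1}(P_1)$ and $S_2\in\Delta_{\Sigma_2}(P_2)$, this does not affect the validity of your proof of the stated result.
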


\begin{proof}
Since any standard support is a subset $S \subset S_1 \times_Q S_2,$ it suffices to show that $S_1 \times_Q S_2$ is a facet of $\Delta_{\Sigma_1 \oplus \Sigma_2}(P_1 \times_Q P_2).$ 
Let $[a_1, b_1] \ldots [a_k, b_k]$ be the list of all points in $S_1 \times_Q S_2.$  Any monomial $x^{\bold{m}}$ with support in this set
maps to monomials $\pi_1(\bold{m}), \pi_2(\bold{m})$ with support in $S_1$ and $S_2$ respectively, this means that $\pi_1(\bold{m})$
and $\pi_2(\bold{m})$ define standard monomials, so $x^{\bold{m}}$ must be standard as well.  
\end{proof}

When $(P_1, \Sigma_1),$ $(P_2, \Sigma_2),$ and $(Q, \Sigma)$ are flag, a facet $S \in \Delta_{\Sigma_1 \oplus \Sigma_2}(P_1 \times_Q P_2).$
corresponds to a convex region in $P_1\times_Q P_2.$ By proposition \ref{flagprod} above, $S$ is the fiber product 
polytope of its images in $P_1$ and $P_2$ over a maximal facet of $\Delta_{\Sigma}(Q),$ a
sub-polytope of $Q$ with unique factorization.  When we restrict our attention to balanced polytopes, a fiber product $(P_1, \Sigma^2) \times_{(Q, \Sigma^2)} (P_2, \Sigma^2)$
where the maps are coordinate projections, gives a flag pair $(P_1 \times_Q P_2, \Sigma^2 \oplus \Sigma^2).$  By our discussion, the standard regions
of this term order are all fiber products of cubical subpolytopes of $P_1$ and $P_2$ over cubical subpolytopes
of $Q,$ so they are all cubical.  However, the term order $\Sigma^2 \oplus \Sigma^2$ is double $\Sigma^2$ over the coordinates from $Q.$

\begin{equation}
\Sigma^2 \oplus \Sigma^2(V, x, W) = \Sigma^2(V) + 2\Sigma^2(x) + \Sigma^2(W)\\
\end{equation}

\noindent
So this is not quite the $\Sigma^2$ term order.  An equivalence in the category $\mathcal{P}$ occurs when 
a polytope $P$ has two term orders with the same standard monomials, we will show that this occurs when 
we take the fiber product of balanced polytopes. 

\begin{proposition}
Let $P_1 \times_Q P_2$ be a fiber product of balanced polytopes with $\pi_1: P_1 \to Q$ and $\pi_2:P_2 \to Q$
induced by coordinate projections.  Then the product term order $\Sigma^2 \oplus \Sigma^2$ has the same standard monomials
as the term order given by $\Sigma^2$ on the fiber product.
\end{proposition}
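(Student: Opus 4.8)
The plan is to show that the two term orders single out the \emph{same} set of monomials as standard, namely the ``balanced'' monomials of the fiber product, even though the two underlying weight functions genuinely differ. Write $R = P_1\times_Q P_2$ and denote a lattice point of $R$ by $(V,x,W)$, where $x$ records the $Q$-coordinates and $V,W$ the remaining coordinates coming from $P_1$ and $P_2$; recall $\Sigma^2\oplus\Sigma^2(V,x,W) = \Sigma^2(V) + 2\Sigma^2(x) + \Sigma^2(W)$, so the two weights on $R$ differ exactly by an extra copy of $\Sigma^2$ on the $Q$-coordinates. Call a monomial $x^{(V_1,x_1,W_1)\cdots(V_N,x_N,W_N)}$ of $R$ \emph{balanced} if the lattice points $(V_i,x_i,W_i)$ pairwise differ by at most $1$ in every coordinate. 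I would prove that the $\Sigma^2$-standard monomials of $R$ and the $(\Sigma^2\oplus\Sigma^2)$-standard monomials of $R$ are both equal to the set of balanced monomials of $R$.

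The technical core is the lemma that the coordinatewise balancing of a \emph{pair} of lattice points of $R$ again lies in $X_R$. Given $p = (V_1,x_1,W_1)$ and $q = (V_2,x_2,W_2)$ in $X_R$, form $p',q'$ by replacing each matched pair of entries $(p_\ell,q_\ell)$ with $\lfloor (p_\ell+q_\ell)/2\rfloor$ and $\lceil (p_\ell+q_\ell)/2\rceil$. The $P_1$-projections $(V_1,x_1),(V_2,x_2)$ lie in $X_{P_1}$, so since $P_1$ is balanced their balancing lies in $X_{P_1}$, and that balancing is $\{(\bar V,\bar x),(\bar V',\bar x')\}$ with entries the entrywise floors and ceilings of averages; likewise the balancing of $(W_1,x_1),(W_2,x_2)$ lies in $X_{P_2}$. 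Since the balancing of the integer pair $(x_1,x_2)$ is determined entrywise by $x_1,x_2$ alone, the two $Q$-parts agree, and gluing yields $p',q'\in X_R$ with $p'+q'=p+q$. Iterating pairwise balancings of points that still differ by $\ge 2$ in some coordinate strictly lowers the nonnegative integer $\Sigma^2$ at each step, so the algorithm of Proposition \ref{balnum} runs entirely inside $X_R$; hence $R$ is itself a balanced polytope, and in particular, by the characterization of balanced polytopes proved above, its $\Sigma^2$-standard monomials are precisely its balanced monomials.

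It then remains to identify the $(\Sigma^2\oplus\Sigma^2)$-standard monomials of $R$ with the balanced monomials. Fix a lattice point $b$ of $NR$ and work in its fiber $\Delta_b$. A pairwise balancing of two points of a monomial of $R$ which differ by $\ge 2$ in some coordinate $\ell_0$ keeps the monomial in $\Delta_b$ (by the lemma), weakly decreases $\sum_i (p_i)_\ell^2$ for every coordinate $\ell$, and strictly decreases it for $\ell=\ell_0$; since $\Sigma^2\oplus\Sigma^2$ on $R$ is the sum of these coordinatewise sums of squares with the positive weights $1$ or $2$, such a move strictly decreases $\Sigma^2\oplus\Sigma^2$. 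Thus every non-balanced monomial of $\Delta_b$ admits a strictly smaller monomial in $\Delta_b$, while every balanced monomial of $\Delta_b$ is a fixed point of balancing; moreover, by the coordinatewise extremal property of balancing recalled in the subsection on balanced polytopes, all balanced monomials of $\Delta_b$ realize one and the same value of $\Sigma^2\oplus\Sigma^2$, which is therefore the minimum of $\Sigma^2\oplus\Sigma^2$ over $\Delta_b$. Hence the $(\Sigma^2\oplus\Sigma^2)$-minimal elements of $\Delta_b$ are exactly the balanced monomials in $\Delta_b$, and letting $b$ vary shows the $(\Sigma^2\oplus\Sigma^2)$-standard monomials of $R$ are the balanced monomials. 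Combined with the previous paragraph, the two term orders have the same standard monomials.

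The feature that at first looks like the obstacle — that $\Sigma^2$ and $\Sigma^2\oplus\Sigma^2$ are \emph{not} the same function on $R$, differing by the doubled weight on the $Q$-coordinates, and that neither order need have a unique standard monomial in a fiber — is precisely what this approach routes around: one never compares the two orders directly, only observes that each is strictly decreased by the balancing move and that the fixed points of that move in a given fiber (the balanced monomials) form the minimizing set for each of the two weights. The one step that genuinely needs care is the lemma that balancing a pair does not leave $R$, which rests on the canonical, entrywise description of the balancing of two integer vectors so that the $Q$-coordinates produced through $P_1$ and through $P_2$ automatically coincide.
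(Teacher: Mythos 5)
Your route is genuinely different from the paper's: the paper deduces the statement from the characterization of $\Sigma_1\oplus\Sigma_2$-standard monomials on a fiber product established in the proof of Proposition \ref{fiber} (which is where the unique-standard-monomial hypothesis on the base enters), together with the observation that a tuple $(V_i,x_i,W_i)$ is balanced exactly when its two projections are; you instead try to prove outright that $P_1\times_Q P_2$ is a balanced polytope and then identify the standard monomials of both weight functions with the balanced monomials by a descent argument. The second half of your plan is sound: balanced tuples with a fixed sum have the same multiset of entries in every coordinate, hence the same value of any positively weighted sum of coordinatewise squares, and that value is the minimum over the fiber once the fiber is known to contain a balanced tuple, so an unbalanced monomial is strictly beaten under both $\Sigma^2$ and $\Sigma^2\oplus\Sigma^2$.

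The gap is in the gluing lemma, at exactly the step you flag as needing care. You claim that, because $P_1$ is balanced, the balancing of $(V_1,x_1),(V_2,x_2)$ that lies in $X_{P_1}$ is the canonical pair of entrywise floors and ceilings, and similarly for $P_2$, so that the $Q$-parts automatically coincide. But the definition of a balanced polytope only guarantees that \emph{some} balanced pair with the same sum consists of lattice points; which element receives the floor and which the ceiling can vary from coordinate to coordinate, and the canonical assignment can leave the polytope. For example, the triangle with vertices $(0,1)$, $(1,0)$, $(1,1)$ is balanced (its only lattice points are the vertices, and every tuple of $0/1$-vectors is already balanced), yet the canonical balancing of the pair $(0,1),(1,0)$ is $(0,0),(1,1)$, which is not contained in it. Consequently, when $\dim Q\geq 2$ the balanced pairs supplied by $P_1$ and by $P_2$ may distribute floors and ceilings over the shared coordinates incompatibly, and they need not glue to a lattice point of $P_1\times_Q P_2$; this is precisely the role played in the paper by the unique-standard-monomial hypothesis on the base in Proposition \ref{fiber}. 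In the setting the paper actually uses, and in which this proposition sits, the base is the interval $[0,L]$: there the unordered pair of $Q$-values of any balanced pair summing to a given integer is forced, so your lemma can be repaired by matching the $P_1$- and $P_2$-balancings along their $Q$-values rather than appealing to canonicity, and with that repair (and the observation, as in Proposition \ref{balnum}, that each such replacement strictly lowers both weights) your argument goes through and gives a self-contained alternative to the paper's proof. As written, however, the justification that ``the two $Q$-parts agree'' does not hold in the generality you claim.
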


\begin{proof}
A monomial $x^{\bold{n}}$ is standard  with respect  to $\Sigma^2 \oplus \Sigma^2$ if and only if it is of the form $x^{(V_1, x_1, W_1) \ldots (V_k, x_k, W_k)}$
where $x^{(V_1, x_1) \ldots (V_k, x_k)}$ and $y^{(x_1, W_1) \ldots (x_k, W_k)}$ are standard for $(P_1, \Sigma^2)$ and $(P_2, \Sigma^2),$ and 
are therefore balanced.  This is the case if and only if the tuple $(V_1, x_1, W_1) \ldots (V_k, x_k, W_k)$ is balanced. 
\end{proof}

In particular, a fiber product of balanced polytopes over maps which are coordinate projections is a balanced polytope. 
We are interested in fiber products primarily because we can control their generators
and relations.  We establish this by proving propositions \ref{flaggen} and \ref{flagrel}.  These will be used to lift the commutative algebra properties from the building blocks discussed in the last section to $P_{\Gamma}(\vec{r}, L).$

\begin{proof}[Proof of Proposition \ref{flaggen}]
The polytope $(P_1 \times_Q P_2, \Sigma_1 \oplus \Sigma_2)$ is flag by proposition \ref{flagfiber},
so it is tiled by the convex hulls of the maximal facets of $\Delta_{\Sigma_1 \oplus \Sigma_2}(P_1\times_1 P_2).$
By proposition \ref{flagprod} these are all fiber products of normal polytopes over polytopes
with unique factorization, and are therefore normal. 
\end{proof}

\begin{proof}[Proof of Proposition  \ref{flagrel}]
If $x^{(a_1, b_1) \ldots (a_k, b_k)} - x^{(A_1, B_1) \ldots (A_k, B_k)}$ is in the ideal for the fiber product
then it can be converted to a relation in some maximal facet of $\Delta_{\Sigma_1 \oplus \Sigma_2}(P_1\times_Q P_2)$ by degree
$2$ relations, so we may assume without loss of generality that $x^{a_1 \ldots a_k} = x^{A_1 \ldots A_k}$
and $y^{b_1 \ldots b_k} = y^{B_1 \ldots B_k}$ are relations among standard monomials in $P_1$ and $P_2$
respectively. Any relation among standard monomials $x^{a_1 \ldots a_k} - x^{a_1'\ldots a_k'}$ can be lifted in some way 
to a relation $x^{(a_1, b_1) \ldots (a_k, b_k)} - x^{(a_1', b_{i_1})\ldots (a_k', b_{i_k})}$ in $P_1\times_Q P_2$ 
because $Q$ has unique factorization.  This way, $(a_1, b_1)\ldots (a_k, b_k)$ can be converted
to $(A_1, B_1)\ldots (A_k, B_k)$ by relations among standard monomials of $P_1$ and $P_2.$
\end{proof}

\section{Fiber products and quadratic square-free Gr\"obner bases}

The class of polytopes with flag term orders has a distinguished subclass given by those total term orders
with a quadratic square-free Gr\"obner basis.  The "quadratic" part of this distinction is equivalent to the condition that any monomial with standard degree $2$ divisors must itself be standard, and "square-free" implies that the powers of any lattice point must be standard.    The standard regions $S_i$ of these polytopes are unit simplices.  We now know that fiber products in this subcategory yield normal polytopes with a flag term order, however there is not enough information in the fiber product order $\Sigma_1 \oplus \Sigma_2$ to yield a total term order, and a Gr\"obner basis. The issue is that for standard monomials defined by $\vec{v}, \vec{x}, \vec{w}$ in $P_1$, $Q$ and $P_2$ respectively, there could be many ways to form a fiber product
monomial, 

\begin{equation}
(v_1, x_1, w_1) \ldots (v_k, x_k, w_k).
\end{equation}

\noindent
If two elements $w_i$ and $w_j \in P_1$ map to the same $x_j \in Q,$ then we could plausibly permute these entries to obtain distinct, new
standard monomials with respect to $\Sigma_1 \oplus \Sigma_2.$    The standard monomials in the fiber product are exactly those obtained by fixing $v_1, \dots, v_n$ in some order and permuting the $w_1, \ldots, w_k$ while respecting the shared $x_1, \ldots, x_k,$ so
we must establish an order on these sets.  Each standard region in $P_1\times_P P_2$ is
a fiber product of simplices  with well-ordered lattice points over a simplex with well-ordered 
lattice points, so to prove proposition \ref{quadgrob}, we can reduce to the following proposition. 

\begin{proposition}
Let $P_1$, $P$ and $P_2$ be unit simplices each with a fixed well-ordering $\Sigma_1, \Sigma, \Sigma_2$ on their lattice points.  Then $P_1 \times_P P_2$ has a quadratic square-free Gr\"obner basis defined by the orderings $\Sigma_1, \Sigma,$ and $\Sigma_2$
\end{proposition}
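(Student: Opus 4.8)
The plan is to produce an explicit candidate Gr\"obner basis and then pin it down with a Hilbert function count. Write $D := P_1 \times_P P_2$, and let $u_0,\dots,u_m$ be the vertices of $P$. Since $P_1,P,P_2$ are unit simplices their only lattice points are vertices, and a lattice map carries vertices to vertices, so $\pi_1$ and $\pi_2$ partition the vertices of $P_1$ and of $P_2$ into ``color classes'' $V_0,\dots,V_m$ and $W_0,\dots,W_m$ indexed by the $u_j$. The lattice points of $D$ are exactly the pairs $(v,w)$ with $v\in V_j,\ w\in W_j$ for a common $j$. Because the lattice points of a unit simplex are affinely independent, any $b\in ND$ determines the multiset of $P_1$-coordinates and the multiset of $P_2$-coordinates of every monomial in $\Delta_b$; hence $\Delta_b$ is precisely the set of ways of matching these two multisets to one another inside each color class. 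First I would record these structural facts, which reduce the whole question to a ``many disjoint Segre products'' picture.

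Next I would take as candidate Gr\"obner basis $\mathcal{G}$ the collection of binomials $(v,w)(v',w') - (v,w')(v',w)$ where $v,v'$ (and hence $w,w'$) lie in a common color class; each clearly lies in $I_D$, and each is quadratic. The crucial step is to compute leading terms with respect to the $\boxtimes$ term order. Both monomials have the same degree and the same $\Sigma_1\oplus\Sigma_2$ value, so the comparison is settled by the tie-break that writes each monomial's support in decreasing order in the lattice-point order and compares lexicographically. Using that $\Sigma_1$ and $\Sigma_2$ are total orders (so distinct lattice points get distinct values), a short case analysis shows that the leading term is always the \emph{comonotone} monomial $x^{(p,q)(p',q')}$ with $\Sigma_1(p)>\Sigma_1(p')$ and $\Sigma_2(q)>\Sigma_2(q')$ --- in particular square-free. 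I expect this leading-term computation to be the main obstacle, since the definition of $\boxtimes$ is layered; everything after it is formal.

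With the leading terms in hand I would identify the $\mathcal{G}$-standard monomials: $\prod_i (v_i,w_i)$ avoids every $in_\prec(g)$ if and only if, within each color class, ordering the factors by increasing $\Sigma_1$-value of the $P_1$-coordinate makes the $\Sigma_2$-values of the $P_2$-coordinates non-increasing (the \emph{sorted} monomials). The combinatorial heart of this step is the claim that every nonempty fiber $\Delta_b$ contains exactly one sorted monomial: existence by sorting the two prescribed multisets oppositely and pairing them off in each color class, uniqueness by peeling off the largest $\Sigma_1$-value appearing in a color class and inducting.

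Finally I would run the Hilbert function argument. The number of $\mathcal{G}$-standard monomials of degree $N$ equals the number of nonempty fibers $\Delta_b$ with $b\in ND$, which is exactly $\dim(\C[X_D]/I_D)_N$. On the other hand $\langle in_\prec(\mathcal{G})\rangle \subseteq in_\prec(I_D)$, and $\dim\bigl(\C[X_D]/in_\prec(I_D)\bigr)_N = \dim(\C[X_D]/I_D)_N$; since the two monomial ideals have the same standard-monomial count in every degree and one contains the other, they coincide. Hence $\mathcal{G}$ is a Gr\"obner basis for $I_D$, and by the leading-term computation it is quadratic and square-free, which is the assertion.
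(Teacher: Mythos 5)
Your proposal is correct, and it is built on the same two ingredients as the paper's own argument: the $\boxtimes$ term order and the quadratic exchange (sorting) binomials $(v,w)(v',w') - (v,w')(v',w)$ within a color class, together with the observation that a point of $N(P_1\times_P P_2)$ determines the multisets of $P_1$- and $P_2$-coordinates because the lattice points of a unit simplex are affinely independent (the paper uses this implicitly when it asserts the sets $\{V_i\}$ and $\{W_i\}$ agree for two monomials in the same fiber). Where you genuinely diverge is in how the Gr\"obner property is certified: the paper runs a direct reduction, showing that any monomial can be pushed to the standard one by order-lowering degree-$2$ exchanges and then checking separately that pure powers $x^{(V,W)\cdots(V,W)}$ are standard (which gives square-freeness); you instead compute the leading terms explicitly (the comonotone pairings, manifestly quadratic and square-free), prove that every nonempty fiber contains exactly one comonotone-free monomial, and close with the containment-plus-Hilbert-function count $\langle in_\prec(\mathcal{G})\rangle \subseteq in_\prec(I_D)$ with equal standard-monomial counts in each degree. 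Your route buys a cleaner certification -- it sidesteps having to argue that each exchange strictly lowers the $\boxtimes$ order and that the reduction terminates at the true minimum, which the paper dispatches rather tersely ("performing this exchange must yield a lower monomial") -- at the cost of the extra existence-and-uniqueness lemma for sorted matchings of multisets; the paper's reduction is shorter and directly exhibits the normal-form algorithm, which is what it reuses when adapting the argument to the subdiagram pieces of $B_2(L)$. Both your flagged steps (the leading-term case analysis and the peel-off-the-largest-$\Sigma_1$-value induction) go through as you anticipate.
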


\begin{proof}
This follows from a modification of Corollary 2.11 in \cite{Su}.
To define the new term-order, we use the $\boxtimes$ term order.  We have $(V_1, W_1) < (V_2, W_2)$ if $V_1 < V_2$ or $V_1 = V_2$ and $W_1 < W_2,$
and

\begin{equation}
[(V_1^1, W_1^1)\cdots (V_k^1, W_k^1)] > [(V_1^2, W_1^2)\cdots (V_k^2, W_k^2)]\\
\end{equation}

\noindent
if $(V_i^1, W_i^1) = (V_i^2, W_i^2)$ for all $i < \ell$ for some $\ell$
and $V_{\ell}^1 > V_{\ell}^2$ or $V_{\ell}^1 = V_{\ell}^2$ with $W_{\ell}^1 > W_{\ell}^2,$ where we've listed the terms of the monomials in decreasing order.  Assume now that both monomials map to the same lattice point, then the sets $\{V_i^1\}$ and $\{V_i^2\}$ are the same, as are $\{W_i^1\}$ and $\{W_i^2\},$ so we may assume without loss of generality 
that $V_i^1 = V_i^2.$  In this case, if $W_{\ell}^1 > W_{\ell}^2$ then $W_m^1 = W_{\ell}^2$ for some $m > \ell,$ so for the first monomial we can form the relation

\begin{equation}
(V_{\ell}, W_{\ell}^1)(V_m, W_m^1) = (V_{\ell}, W_m^1)(V_m, W_{\ell}^1)\\
\end{equation}

\noindent
Performing this exchange must yield a lower monomial.
This implies that an arbitrary monomial can be taken to the associated standard
monomial with degree $2$ weight-lowering relations.   Now consider a monomial of the form
$x^{(V, W), \ldots, (V, W)},$ and let $x^{(V_1, W_1), \ldots, (V_k, W_k)}$ be another standard
monomial with the same $\Sigma_1 \oplus \Sigma_2$ weight.  This implies that $(V_1, \ldots, V_k) = (V, \ldots, V)$
and $(W_1, \ldots, W_k) = (W, \ldots, W),$ since both $\Sigma_1$ and $\Sigma_2$ define total term orders.   
\end{proof}

We remark that the argument above can be adapted to show that if 
$(P_1, \Sigma_1)$ and $(P_2, \Sigma_2)$ are flag, then the so is $P_1 \times_Q P_2$ with respect
to the $\boxtimes$ term order.








\section{The building blocks of $P_{\Gamma}(\vec{r}, L)$}
In this section we construct and study the building block polytopes, showing that these are all balanced polytopes.  Recall
that we always take fiber product over the interval $[0, L].$  This polytope is balanced,
and the maximal facets of $\Delta_{\Sigma^2}([0, L])$ are the unit length intervals, $[k, k+1].$  

The polytope $P_3(L)$ is dimension $3$ and $B_2(L)$ is dimension $4$, but the other building blocks have dimension $\leq 2.$
In the case of a balanced dimension $2$ polytope $P,$ every intersection of a translate of the unit square with $P$ must be a lattice polytope, so
 $P$ is a convex union of squares and triangles. In particular, all the facets of $P$ are parallel to the lines $y = 0, x = 0, y = x$ or $y = - x,$ 
and conversely any lattice polytope in $\R^2$ with this property is balanced. 

\begin{proposition}
Let $P$ be a balanced polytope in $\R^2,$ then $I_P$ has a quadratic, square-free
Gr\"obner basis. 
\end{proposition}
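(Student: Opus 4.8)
The plan is to use the description of balanced polytopes from the previous section, and reduce to the case of a single "fundamental region," which for a balanced $2$-dimensional polytope is a subpolytope of a translate of the unit square — that is, either a unit square or a unit right triangle (possibly of one of the four orientations coming from the slopes $y=0$, $x=0$, $y=x$, $y=-x$). By Proposition~\ref{subd}, the polytope $P$ is geometrically subdivided by $\Delta_{\Sigma^2}(P)$ into such regions, and by the corollary following Lemma~\ref{d2}, the binomial ideal $I_P$ is generated by the ideals $I_{S_i}$ of the regions together with the degree $2$ relations $x^{a_1a_2} = x^{A_1A_2}$ that convert a non-standard degree $2$ monomial to its standard form. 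The $\Sigma^2$ term order, being a total term order refining degree, already gives a Gr\"obner basis for each piece and for the gluing relations, so it suffices to show: (i) each fundamental region $S_i$ has a quadratic square-free Gr\"obner basis with respect to $\Sigma^2$, and (ii) the combined set of binomials is in fact a Gr\"obner basis (not merely a generating set), with square-free quadratic leading terms.

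For step (i): a unit square and a unit triangle have at most $4$ and $3$ lattice points respectively, so each $\C[S_i]$ is a well-understood toric algebra. A unit triangle is a unimodular simplex, so $\C[S_i]$ is a polynomial ring and $I_{S_i} = 0$. A unit square has four lattice points $p_{00}, p_{10}, p_{01}, p_{11}$ with the single relation $x_{00}x_{11} - x_{10}x_{01}$; this is the Segre relation, and with respect to $\Sigma^2$ one checks $\Sigma^2(p_{00}) + \Sigma^2(p_{11})$ versus $\Sigma^2(p_{10}) + \Sigma^2(p_{01})$ — since the square is a translate $c + \{0,1\}^2$, writing $c = (a,b)$ gives $a^2 + b^2 + (a+1)^2 + (b+1)^2$ on both sides, a tie, which is broken by the chosen refinement; in any case the leading term is a square-free quadratic monomial, and a single binomial is trivially a Gr\"obner basis. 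So every $I_{S_i}$ has a quadratic square-free Gr\"obner basis.

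For step (ii): this is where the real work is, and it is the main obstacle — one must verify that Buchberger's criterion holds for the union of the per-region Gr\"obner bases and the degree $2$ "balancing" relations. The key point is the argument already packaged in Lemma~\ref{d2} and its corollary: any monomial can be driven to its unique standard representative by degree $2$ weight-lowering moves, and any S-pair reduces to zero because the only obstructions to confluence come from overlapping degree $2$ patterns, which are resolved by the balancing moves themselves (two entries differing by more than $1$ get balanced, strictly lowering $\Sigma^2$, and the process terminates by finiteness of each fiber $\Delta_b$). Concretely, I would observe that the nonstandard monomials are exactly those with a degree $2$ divisor supported on two lattice points lying in no common fundamental region, the leading term of the corresponding balancing binomial is that divisor (square-free, quadratic), and these leading terms together with the Segre leading terms from the squares generate the initial ideal; Buchberger's criterion is then checked on the finitely many overlap types, each of which rebalances to a lower monomial. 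Since all leading terms produced are square-free and of degree $2$, $I_P$ has a quadratic square-free Gr\"obner basis.

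The anticipated difficulty is purely the bookkeeping in (ii): making precise that the balancing relations for a $2$-dimensional balanced polytope, whose fundamental regions may be triangles as well as squares, really do form a Gr\"obner basis and not just a generating set. I expect the author handles this either by invoking the $\Sigma^2$-reduction algorithm of Lemma~\ref{d2} directly (every S-polynomial reduces to zero because both of its monomials reduce to the common standard monomial of their shared fiber), or by citing the description of $\Delta_{\Sigma^2}(P)$ as the union of unimodular pieces and quoting a standard fact that a term order is a Gr\"obner basis term order iff its initial complex is the Stanley--Reisner complex of the corresponding triangulation.
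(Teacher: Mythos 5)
Your proposal follows essentially the same route as the paper: the paper's proof simply defines the total term order (first degree, then $\Sigma^2$, then a lexicographic tie-break on the unit square with $[1,1] > [1,0] > [0,1] > [0,0]$) and leaves the quadratic square-free verification to the reader, which is exactly the order you arrive at via the balanced decomposition into unit squares and unimodular triangles, with the unbalanced degree-$2$ monomials and the Segre leading terms generating the initial ideal. Your step (ii) is a reasonable sketch of the verification the paper omits, so there is no disagreement in method, only in how much of that verification is written out.
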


\begin{proof}
We define a term order on $\C[X_P]$  and leave it to the reader to verify that it is
quadratic and square-free.  First order the elements by degree, then by $\Sigma^2,$ 
then by the Lexicographic ordering on the square, where $[1, 1] > [1, 0] > [0, 1] > [0,0].$
\end{proof}

From this result we can deduce a corollary for dimension $2$ balanced polytopes. 

\begin{theorem}\label{d2bp}
Let $P$ be a fiber product of a finite number of dimension $2$ balanced
polytopes over balanced polytopes of dimension $2$ or $1.$  Then $I_P$
has a Quadratic Square-free Gr\"obner Basis. 
\end{theorem}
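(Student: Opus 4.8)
The strategy is to combine the two closure results already established: Proposition \ref{quadgrob} for the concatenation product $\boxtimes$, and the preceding proposition that a dimension $2$ balanced polytope has a quadratic square-free Gr\"obner basis. First I would handle the base cases: a dimension $1$ balanced polytope is (a lattice subpolytope of) an interval $[0,L]$, which is normal and whose toric ideal is trivial (there is a unique lattice point in each degree up to the relation $x_k x_{k+2} = x_{k+1}^2$), so it certainly has a quadratic square-free Gr\"obner basis; a dimension $2$ balanced polytope has one by the proposition just proved. So every factor and every base of the iterated fiber product satisfies the hypotheses of Proposition \ref{quadgrob}.

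Next, I would argue by induction on the number of dimension $2$ factors. Given $P = P_1 \times_Q P_2 \times_Q \cdots$ written as an iterated fiber product, I would group it as $(P_1 \times_Q \cdots \times_Q P_{k-1}) \times_{Q_k} P_k$ where $Q_k$ is one of the balanced polytopes of dimension $1$ or $2$ used as a base. The inner iterated product has a quadratic square-free Gr\"obner basis by the inductive hypothesis, $P_k$ has one by the base case, and $Q_k$ has one by the base case; hence by Proposition \ref{quadgrob} (equipping $P$ with the $\boxtimes_{(Q_k,\Sigma)}$ term order built from these) the whole polytope $P$ has a quadratic square-free Gr\"obner basis. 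A small point to check is that the bases $Q$ of the fiber products appearing in the $P_\Gamma(\vec r,L)$ setting are all intervals $[0,L]$, which indeed have quadratic square-free Gr\"obner bases, so the hypothesis on the base of Proposition \ref{quadgrob} is genuinely met; for the general statement one simply includes the hypothesis that the bases are dimension $\le 2$ balanced and invokes the two base cases.

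\textbf{Main obstacle.} The real subtlety is bookkeeping around \emph{which} term order on the iterated fiber product makes everything work, since $\boxtimes$ is not symmetric in its two arguments and iterating it requires a consistent choice of which side is the "outer" factor at each stage. One must make sure that the term order produced at stage $k$ restricts correctly to the term orders assumed on the inputs at stage $k+1$, i.e. that the $\boxtimes$ operation is associative enough for the induction to carry the hypotheses forward. I expect this to reduce, as in \cite{Su}, to the observation that a fiber product of unit simplices over a unit simplex is again tiled by fiber products of unit simplices (Propositions \ref{flagprod} and the simplex refinement discussed after it), so that the standard regions of the iterated product are themselves iterated fiber products of unit simplices, and the $\boxtimes$ term order on each such region is exactly the lexicographic-type order analyzed in the proposition preceding this theorem. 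Once that structural fact is in place, the induction is routine, and the degree and square-free bounds propagate without increase.
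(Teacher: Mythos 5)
Your argument is essentially the paper's own: the theorem is stated there as a direct consequence of the preceding proposition (dimension $2$ balanced polytopes have quadratic square-free Gr\"obner bases) together with Proposition \ref{quadgrob}, applied inductively over the factors with intervals and dimension $\le 2$ balanced polytopes as bases, which is exactly your plan. Your filling in of the induction and the $\boxtimes$ term-order bookkeeping is consistent with the paper's (unwritten) details, so the proposal is correct and takes the same route.
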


From now on we use the assumption that the parameters $\vec{r}$ are adapted to the graph in question.  In this case, the condition that every trinode $v \in \Gamma$ must have an even sum implies that every non-loop and non-leaf edge must have an even weight.  For a tree-like graph $\Gamma,$ the disconnected
graph $\hat{\Gamma}$ has four kinds of components, a single loop with a pendant edge, and trinodes with $2, 1,$ or $0$ pendant leaf edges. For a caterpillar graph $\hat{\Gamma}$ the components
are all  loops with pendant edges, or trinodes with $1$ or $2$ fixed edges.  For a tree-like graph, the building blocks we must consider are exactly those pictured in Figures \ref{Fig5}, \ref{Fig6}, \ref{Fig6.5}, and \ref{Fig7}.  For a caterpillar graph we also must use the polytope pictured in Figure \ref{block2}.

\begin{figure}[htbp]
\centering
\includegraphics[scale = 0.34]{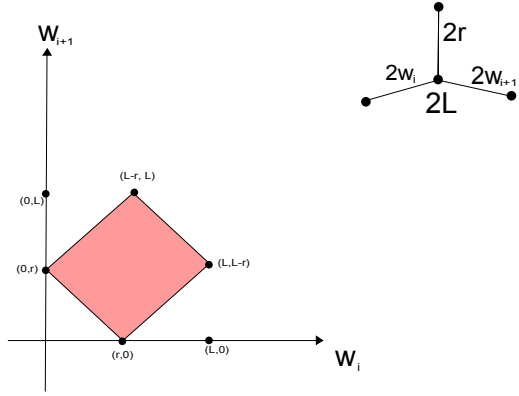}
\caption{The polytope $P_3(r, L)$}
\label{Fig6}
\end{figure}

\begin{figure}[htbp]
\centering
\includegraphics[scale = 0.34]{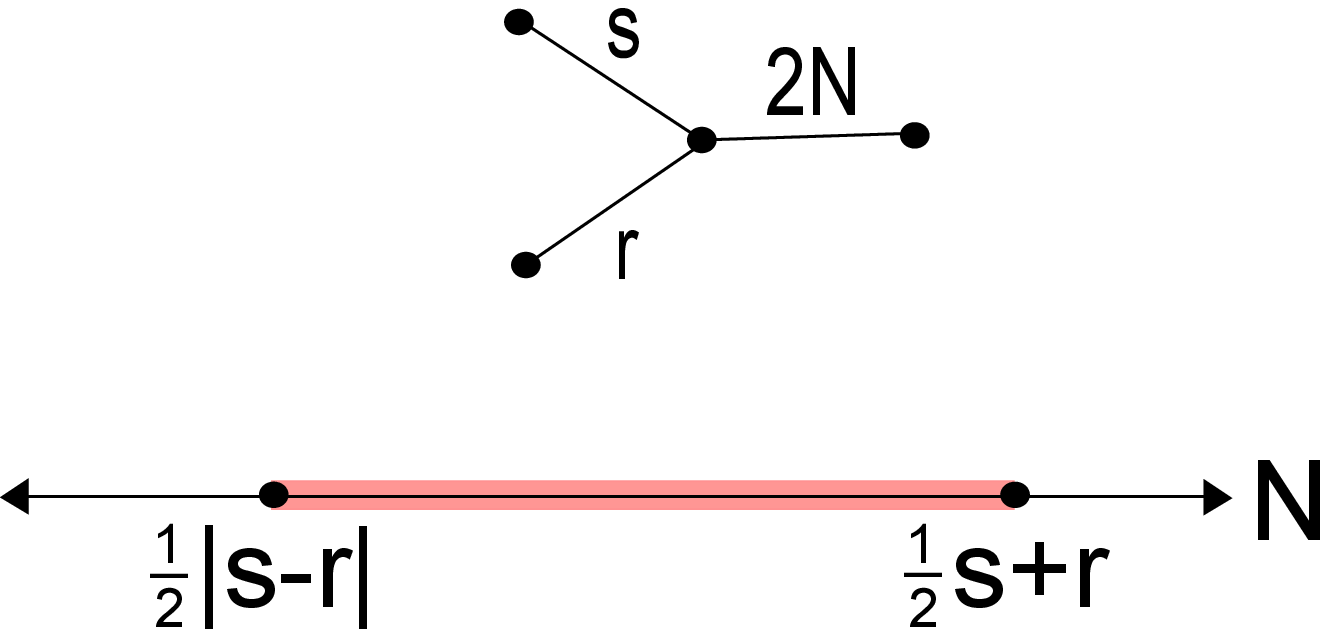}
\caption{The polytope $P_3(r,s,L)$}
\label{Fig6.5}
\end{figure}

\begin{figure}[htbp]
\centering
\includegraphics[scale = 0.34]{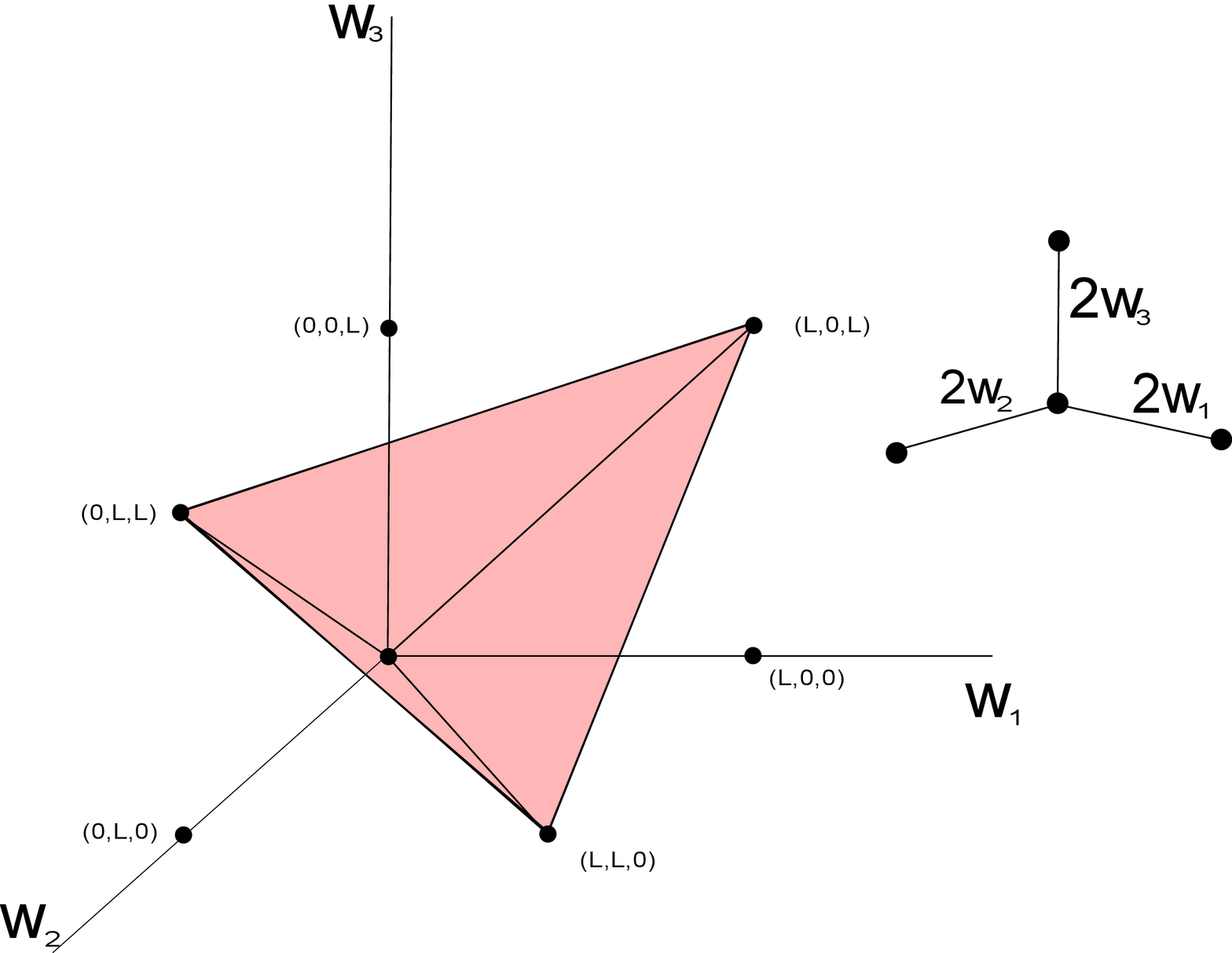}
\caption{The polytope $P_3(L).$}
\label{Fig7}
\end{figure}

\begin{figure}[htbp]
\centering
\includegraphics[scale = 0.34]{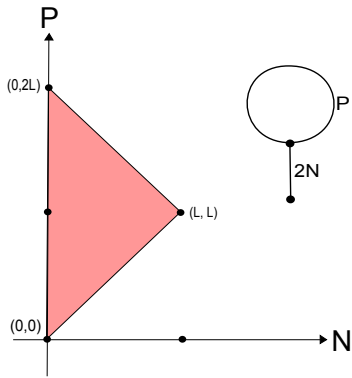}
\caption{The polytope $B(L).$}
\label{Fig5}
\end{figure}

\begin{figure}[htbp]
\centering
\includegraphics[scale = 0.45]{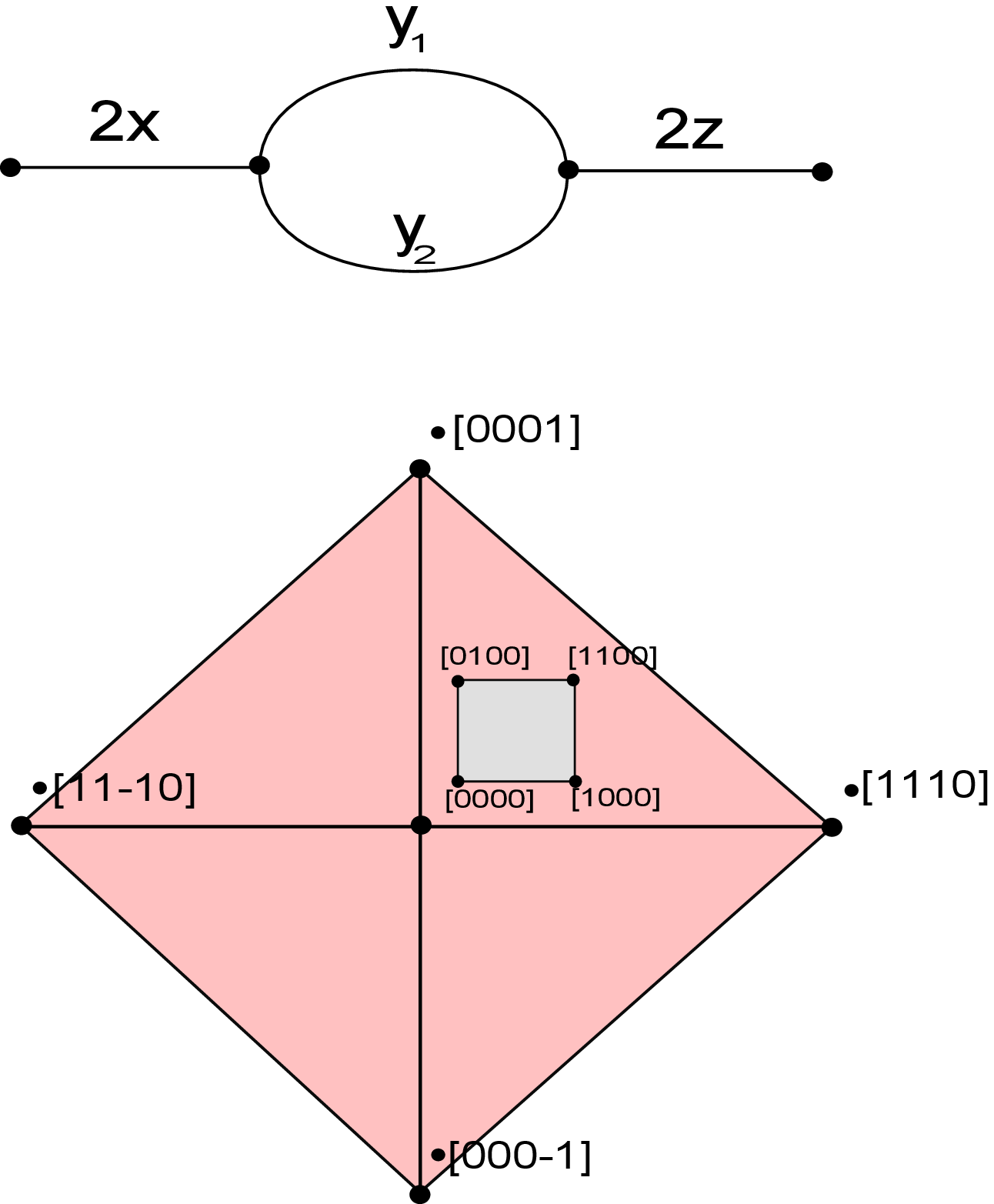}
\caption{The polytope $B_2(L),$ projected into the plane, the fibers over the corners are all points, and the fiber over the center is a square.}
\label{block2}
\end{figure}

The balanced polytope $P_3(L)$ corresponding to the weightings of an internal trinode has dimension $3.$   It is the convex hull of $(0,0,0), (L, L, 0), (L, 0, L),$ and $(0, L, L).$ For $L > 1,$ the standard regions of this polytope with respect to $\Sigma^2$ appear in \cite{M1}, Figure $3$.  For $L = 1,$ this polytope is a non-normal subpolytope of the unit square, this is the reason for the $L > 1$ condition in theorem \ref{polypres}.   It was shown in \cite{M1} that  each of the standard regions of $P_3(L)$ are normal, with quadratic generating relations, 
except for the region at the origin, the convex hull of $[0,0,0], [1, 1, 0], [1, 0, 1], [0, 1, 1],$ and  $[1, 1,1].$  
This polytope has one cubic relation, 

\begin{equation}
[0, 1, 1][1, 0, 1][1, 1, 0] = [0,0,0][1, 1, 1][1, 1, 1],\\ 
\end{equation}

\noindent
this is why Theorem \ref{polypres} stipulates that relations are generating by quadrics and cubics instead
of just quadrics.

Next we analyze the polytope $B_2(L).$  A lattice point of this polytope is given by $4$ non-negative integers $(2x, y_1, y_2, 2z)$
which satisfy the triangle inequalities and the parity condition.  The conditions defining weightings force the quantities $y_1 + y_2$ and $y_1 - y_2$  are forced to be even integers. 
We begin by subjecting this polytope to a change of coordinates.

\begin{equation}
A= \frac{y_1 - y_2}{2}\\
\end{equation}

\begin{equation}
B= \frac{y_1 + y_2}{2}\\
\end{equation}

\noindent
Under this transformation, $B_2(L)$ becomes the polytope on four numbers $x, z, A, B$ subject to the conditions,$x, z, B \geq 0$; $-x, -z \leq A \leq x, z$; $x, z \leq B \leq 2L$.
The projection of $B_2(L)$ onto the $A, B$ plane produces a quadrilateral, shown in Figure \ref{block2} with the fibers of the projection
depicted above each lattice point.  The polytope $B_2(L)$ has $\mathbb{Z}/2 \times \mathbb{Z}/2$ symmetry, which divides
it into four isomorphic quadrants. We represent these quadrants with interlacing diagrams on $4$ numbers below. Arrows in the diagrams point from smaller
entries to larger entries, in particular the polytope corresponding to an interlacing diagram with a single arrow $a \rightarrow b$ with entries bounded by $L$ is the simplex with vertices $[00], [0L], [LL]$ in $\R^2.$

\begin{figure}
$$
\begin{xy}
(-16, -16)*{Q_1(L)};
(0, -16)*{A};
(0, 16)*{B};
(-16, 0)*{x};
(16,0)*{z};
(0,-13)*{\bullet} = "A1";
(0,13)*{\bullet} = "B1";
(-13,0)*{\bullet} = "C1"; 
(13,-0)*{\bullet} = "D1";
"B1"; "C1";**\dir{-}? >* \dir{>};
"B1"; "D1";**\dir{-}? >* \dir{>};
"C1"; "A1";**\dir{-}? >* \dir{>};
"D1"; "A1";**\dir{-}? >* \dir{>};
(34, -16)*{Q_2(L)};
(50, -16)*{-A};
(50, 16)*{B};
(34, 0)*{x};
(66,0)*{z};
(50,-13)*{\bullet} = "A2";
(50,13)*{\bullet} = "B2";
(37,0)*{\bullet} = "C2"; 
(63,-0)*{\bullet} = "D2";
"B2"; "C2";**\dir{-}? >* \dir{>};
"B2"; "D2";**\dir{-}? >* \dir{>};
"C2"; "A2";**\dir{-}? >* \dir{>};
"D2"; "A2";**\dir{-}? >* \dir{>};
(-16, -66)*{Q_3(L)};
(0, -66)*{A};
(0,  -34)*{2L - B};
(-16, -50)*{x};
(16,-50)*{z};
(0,-63)*{\bullet} = "A3";
(0,-37)*{\bullet} = "B3";
(-13,-50)*{\bullet} = "C3"; 
(13,-50)*{\bullet} = "D3";
"B3"; "C3";**\dir{-}? >* \dir{>};
"B3"; "D3";**\dir{-}? >* \dir{>};
"C3"; "A3";**\dir{-}? >* \dir{>};
"D3"; "A3";**\dir{-}? >* \dir{>};
(34, -66)*{Q_4(L)};
(50, -66)*{-A};
(50, -34)*{2L - B};
(34, -50)*{x};
(66,-50)*{z};
(50,-63)*{\bullet} = "A4";
(50,-37)*{\bullet} = "B4";
(37,-50)*{\bullet} = "C4"; 
(63,-50)*{\bullet} = "D4";
"B4"; "C4";**\dir{-}? >* \dir{>};
"B4"; "D4";**\dir{-}? >* \dir{>};
"C4"; "A4";**\dir{-}? >* \dir{>};
"D4"; "A4";**\dir{-}? >* \dir{>};
\end{xy}
$$
\caption{Interlacing diagrams representing quadrants of $B_2(L).$}
\label{quad}
\end{figure}
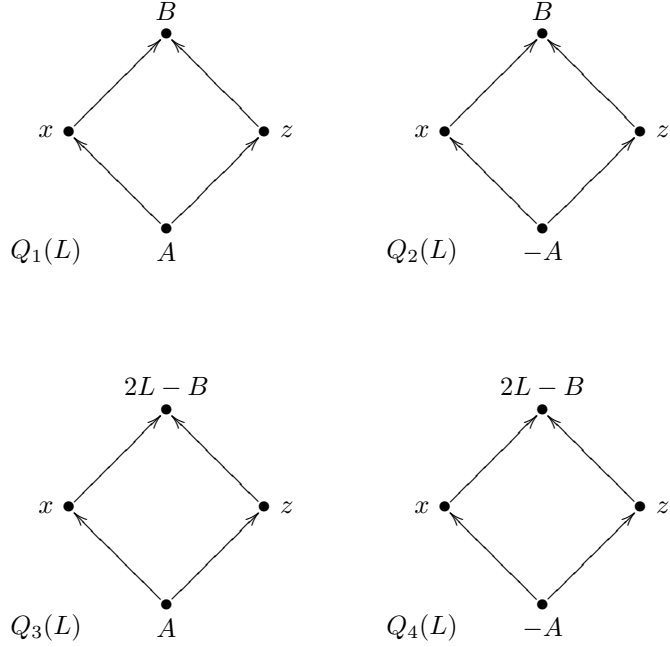

Every entry at the top of a diagram is less than or equal to $L$, and every element at the bottom is greater than or equal to $0.$
  
\begin{proposition}
The quadrant $Q_1(L)$ is a balanced polyope.
\end{proposition}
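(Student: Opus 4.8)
The plan is to identify $Q_1(L)$ as a lattice polytope whose defining inequalities are of only two innocuous kinds — box constraints, bounding a single coordinate below by $0$ or above by $L$, and comparisons ``(one coordinate) $\leq$ (another coordinate)'' — and then to show directly that any such polytope is closed under the pairwise balancing of its lattice points. Once that is established, Proposition~\ref{balnum} finishes the job: for any tuple $b_1,\dots,b_N$ of lattice points of $Q_1(L)$, the balanced tuple produced there is obtained from $b_1,\dots,b_N$ by a finite sequence of pairwise balancings, each of which keeps us inside $X_{Q_1(L)}$, so its output lies in $X_{Q_1(L)}^{N}$, which is exactly the balanced condition.

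The first step is to read $Q_1(L)$ off its interlacing diagram in Figure~\ref{quad}. In the coordinates $(x,z,A,B)$ the diagram records exactly a collection of inequalities each comparing two of $x,z,A,B$ (the arrows), together with ``$A \geq 0$'' (the bottom entry) and ``$B \leq L$'' (the top entry); crucially there are no ``sum'' inequalities of the form $c_i + c_j \leq c$. I also record here that in these coordinates the ambient lattice is all of $\mathbb{Z}^{4}$: the two parallel edges of this block may carry odd weights (they behave like loop edges for the parity bookkeeping), so after the substitution $A=(y_1-y_2)/2$, $B=(y_1+y_2)/2$ the pair $(A,B)$ ranges over all of $\mathbb{Z}^{2}$, while $x$ and $z$ are the halves of the (even) weights on the two pendant edges. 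In particular floors and ceilings of half-sums of lattice points of $Q_1(L)$ are again lattice points.

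Now take two lattice points $p=(p_k)$ and $q=(q_k)$ of $Q_1(L)$ and balance them coordinatewise: put $\lfloor (p_k+q_k)/2 \rfloor$ into the first new vector $p'$ and $\lceil (p_k+q_k)/2 \rceil$ into the second $q'$. Then $p'$ and $q'$ are lattice points with $p'_k + q'_k = p_k + q_k$ for each $k$. For a box constraint: $p_k,q_k \geq 0$ forces $p'_k = \lfloor(p_k+q_k)/2\rfloor \geq 0$, and symmetrically $p_k,q_k\leq L$ forces $q'_k=\lceil(p_k+q_k)/2\rceil\leq L$, the other member of each pair following by monotonicity of $\lfloor\cdot\rfloor$ and $\lceil\cdot\rceil$. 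For a comparison $c_i\leq c_j$: from $p_i\leq p_j$ and $q_i\leq q_j$ we get $p_i+q_i\leq p_j+q_j$, and monotonicity of the floor (resp. the ceiling) then gives $p'_i\leq p'_j$ (resp. $q'_i\leq q'_j$). Hence both $p'$ and $q'$ satisfy every defining inequality of $Q_1(L)$, i.e.\ $Q_1(L)$ is closed under pairwise balancing, and by the reduction of the first paragraph it is balanced.

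I do not anticipate a real obstacle. The only point that needs a little care is the bookkeeping: one must be sure that every facet inequality of $Q_1(L)$ really is a box or comparison inequality (so that the uniform ``floor into $p'$, ceiling into $q'$'' assignment respects all of them simultaneously), and that the relevant lattice is $\mathbb{Z}^{4}$; both are immediate from the description of $B_2(L)$ and the compatibility hypothesis on $\vec r$. Were a parity condition to survive in some related block, one would instead choose the floor/ceiling assignment per coordinate to respect it, but for $Q_1(L)$ this does not arise.
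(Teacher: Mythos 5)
Your proof is correct and is essentially the paper's argument: both rest on the observation that the interlacing-diagram inequalities defining $Q_1(L)$ (single-coordinate bounds by $0$ and $L$, and pairwise comparisons of coordinates, over the lattice $\mathbb{Z}^4$ in the $(x,z,A,B)$ coordinates) are preserved by floor/ceiling rounding, combined with the $\Sigma^2$-descent of Proposition~\ref{balnum} to terminate the pairwise balancing process. The only difference is minor: the paper applies the same rounding trick to a lattice point of the dilate $Q_1(KL)$, thereby also obtaining normality of $Q_1(L)$ (and hence $B_2(L)$) as a by-product, whereas your direct closure-under-pairwise-balancing argument verifies exactly the definition of balanced and nothing more.
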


\begin{proof}
We establish that every element in a Minkowski sum $q \in Q_1(KL)$ has a balancing.  Consider the
interlacing diagram in the top left of the above diagram. For $K_1 + K_2 = K$ we form two new elements, one in $Q_1(k_1L)$ and the other in $Q_1(K_2L)$ by multiplying $q$ by $\frac{K_1}{K}$ (resp. $\frac{K_2}{K})$ and taking ceiling (resp. floor). 
Since $K_1$ and $K_2$ were arbitrary, we can repeat this process until we obtain $K$
elements in $Q_1(L).$  Note that this proves $Q(L)$ and therefore $B_2(L)$ are normal polytopes. 

Given two elements $q_1, q_2 \in Q_1(L)$ obtained from $q$ by this process, we can then form
the balancing $q_1', q_2'$ of $q_1 + q_2$ in $Q_1(2L)$ in the same way.  By Proposition \ref{balnum}, the resulting new factorization $q_1', q_2', \ldots$ of $q$ has $\Sigma^2$ weight
less than or equal to the $\Sigma^2$ weight of the original factorization, with equality occuring exactly
when the entries of $q_1, q_2$ are balanced with respect to each other.  This implies
that if a factorization of $q$ is not balanced, we may lower its $\Sigma^2$ weight with some
pairwise balancing, and after a finite number of these moves, we obtain a balanced factorization of $q.$
\end{proof}

\begin{remark}
The above proof can be adapted to any polytope defined by interlacing patterns. 
For example, this style of proof can be used to establish that the Gel'fand-Tsetlin polytope $GT(\lambda)$
defined by a dominant $SL_n(\C)$ weight $\lambda$ is balanced. 
\end{remark}

Since $B_2(L)$ is composed of $4$ isomorphic copies of $Q_1(L)$, it follows that it is a balanced polytope as well.  
We now introduce a term order on the lattice points of $B_2(L).$   For two lattice points, we first order by degree, 
then we order by the $\Sigma^2$ term-order. 
We complete this to a total term order as follows, $[x, z, A, B] < [x', z', A', B']$ iff 
 $B < B',$ or $B = B',$ and $z < z',$ or $B = B', z = z',$ and $x < x',$
or $B = B', z = z', x = x',$ and $z < z'.$  This induces a monomial order on polynomial ring $\C[X_{B_2(L)}],$
which surjects onto the semigroup algebra $\C[B_2(L)].$  

\begin{proposition}
 The term order defined above
induces a quadratic square-free Gr\"obner basis on the binomial ideal $I_{B_2(L)} \subset \C[X_{B_2(L)}].$
\end{proposition}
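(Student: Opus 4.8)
The strategy is to reduce the statement about all of $B_2(L)$ to a statement about its standard regions under the chosen term order, and then recognize those regions as the fiber-product simplices handled by the machinery of Section 3. First I would observe that $B_2(L)$ is balanced (already proved), so by the discussion following Proposition \ref{subd} and by Lemma \ref{d2}, the term order on $B_2(L)$ -- which refines $\Sigma^2$ -- is flag: any monomial can be brought to a standard monomial by degree-$2$ relations, and the standard regions $S_i$ are the convex hulls of the maximal facets of $\Delta_{\Sigma^2}(B_2(L))$, which are cubical (subpolytopes of translates of the unit $4$-cube). So it suffices to show that on each such cubical region, the induced (lexicographic-in-$B,z,x,A$) term order gives $I_{S_i}$ a quadratic square-free Gr\"obner basis, together with the degree-$2$ exchange relations $x^{a_1a_2}-x^{A_1A_2}$ with one side standard; these latter are automatically quadratic and square-free, and they generate $I_{B_2(L)}$ modulo the $I_{S_i}$ by the Corollary to Lemma \ref{d2}.

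Second, I would analyze a cubical standard region. Using the $\Z/2\times\Z/2$ symmetry and the interlacing-diagram description in Figure \ref{quad}, any such region sits inside one quadrant $Q_j(L)$, and after the coordinate change to $(x,z,A,B)$ the defining inequalities $-x,-z\le A\le x,z$ and $x,z\le B\le 2L$ are all of "interlacing" (order-polytope) type. A unit-cube-sized standard region of such a polytope is itself cut out by interlacing inequalities among the four coordinates shifted into $[0,1]$, hence is an order polytope on a poset on (at most) four elements. An order polytope on a poset decomposes naturally as an iterated fiber product of intervals $[0,1]$ over $[0,1]$'s: concretely, each standard region is obtained from the coordinates $B,z,x,A$ by fiber-producting the simplices encoding $z\le B$, $x\le B$, $-A\le x$, $-A\le z$ (or their quadrant-analogues) over the common coordinates. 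Each building simplex $[0,1]^2$-type factor is a unit simplex with a well-ordered vertex set, and the base of each fiber product is a $1$-dimensional unit simplex with well-ordered lattice points. Therefore Proposition \ref{quadgrob} (via the $\boxtimes$ construction and the Proposition preceding it on fiber products of unit simplices) applies: $I_{S_i}$ has a quadratic square-free Gr\"obner basis with respect to the ordering induced by the coordinate orders. The final check is that the lexicographic refinement I wrote down on $B_2(L)$ -- order by degree, then $\Sigma^2$, then lex in $B$, then $z$, then $x$ (then $A$) -- restricts on each $S_i$ to (a total order equivalent to) the $\boxtimes$ order built from those coordinate orders; this is where the specific priority $B>z>x>A$ in the definition is used, matching the order in which the fiber products are taken (base coordinate $B$ first, and so on down the interlacing chain).

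\textbf{Main obstacle.} The delicate point is the bookkeeping in the second step: verifying that each unit-cube standard region of $B_2(L)$ really is the fiber product of unit simplices over unit simplices \emph{in a way compatible} with the stated lex order, so that Proposition \ref{quadgrob} applies verbatim. The inequalities $x,z\le B$ and $-A\le x,z$ do express each region as an order polytope, but a single region may also be cut by the upper bounds $A\le x,z$ and $B\le 2L$, which on a unit-cube scale become additional interlacing constraints; one must check these do not break the fiber-product decomposition (they restrict to faces of the cube, which are again fiber products of simplices of the same shape). Equivalently, as the remark at the end of Section 3 notes, one can sidestep this by proving directly that the $\boxtimes$ order on an iterated coordinate fiber product of unit simplices has a quadratic square-free Gr\"obner basis and that the global order on $B_2(L)$ agrees with it region by region; the rest is the routine (and, as the authors say, reader-left) verification that the exchange relations $(V_\ell,W_\ell)(V_m,W_m)\to(V_\ell,W_m)(V_m,W_\ell)$ strictly lower the order and suffice, plus the square-free check on powers of a single lattice point, which follows because $\Sigma^2$ and the lex tiebreakers are all totally ordering.
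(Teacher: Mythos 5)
Your overall reduction -- pass to the cubical standard regions of the refined $\Sigma^2$ order, identify each region with a sub-interlacing diagram of $Q_1(1)$, and apply Proposition \ref{quadgrob} to a fiber-product decomposition of each region -- is exactly the paper's strategy, and the degree-$2$ exchange step via Lemma \ref{d2} is fine. But there is a genuine gap in your second step: you claim every standard region decomposes as an \emph{iterated fiber product of unit simplices over $1$-dimensional bases} (intervals). This fails for the one region whose interlacing diagram is the \emph{full} diagram of $Q_1(1)$, i.e.\ the region cut out by $0\le A\le x,z\le B\le 1$. That diagram contains a loop (the $4$-cycle $A$--$x$--$B$--$z$--$A$): the coordinates $x$ and $z$ are each constrained by \emph{both} $A$ and $B$, so any attempt to glue it together over a single shared coordinate either loses one of the joint constraints or forces you to fiber over the pair $(A,B)$, a $2$-dimensional base, which is outside the decomposition you invoke. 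This is precisely why the paper's proof splits into two cases: for \emph{proper} subdiagrams (which are loop-free) it uses the fiber-product-over-intervals argument you describe, while the full region $Q(1)$ is handled by direct inspection -- it has exactly six lattice points and its toric ideal is principal, generated by the single square-free quadric $[1100][0000]-[1000][0100]$, so it trivially has the required Gr\"obner basis.

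The gap is contained and fixable: either add the explicit computation for the loop region as above, or rework the decomposition so that the base is allowed to be a unit $2$-simplex (the triangle $\{A\le B\}$), in which case the unit-simplex fiber-product proposition still applies but you must then redo the compatibility check between the $\boxtimes$ order on that product and your lex refinement in $B,z,x,A$ -- a check that is not automatic and is not covered by the "order the coordinates down the interlacing chain" heuristic you use for the tree-shaped diagrams. Your closing remark correctly flags the order-compatibility bookkeeping as delicate, but the real obstruction is the cyclic diagram itself, not the upper-bound faces.
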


\begin{proof}
We sketch the proof.  First, by design, any standard region of the above term order will be a sub-set of a standard region of the $\Sigma^2$
ordering.   Note that we have shown that the standard regions of
this term order are the intersections of $B_2(L)$ with some integer translate $v + C_4$ of the unit cube. These
are all isomorphic to polytopes with entries between $0$ and $1,$ subject to the inequalities defined by some
sub-interlacing diagram of the defining diagram of $Q_1(1).$  A selection of these are depicted below. 

\begin{figure}
$$
\begin{xy}
(0, -16)*{A};
(0, 16)*{B};
(-16, 0)*{x};
(16,0)*{z};
(0,-13)*{\bullet} = "A1";
(0,13)*{\bullet} = "B1";
(-13,0)*{\bullet} = "C1"; 
(13,-0)*{\bullet} = "D1";
"B1"; "D1";**\dir{-}? >* \dir{>};
"C1"; "A1";**\dir{-}? >* \dir{>};
"D1"; "A1";**\dir{-}? >* \dir{>};
(40, -16)*{A};
(40, 16)*{B};
(24, 0)*{x};
(56,0)*{z};
(40,-13)*{\bullet} = "A2";
(40,13)*{\bullet} = "B2";
(27,0)*{\bullet} = "C2"; 
(53,-0)*{\bullet} = "D2";
"B2"; "C2";**\dir{-}? >* \dir{>};
"C2"; "A2";**\dir{-}? >* \dir{>};
(80, -16)*{A};
(80, 16)*{B};
(64, 0)*{x};
(96,0)*{z};
(80,-13)*{\bullet} = "A3";
(80,13)*{\bullet} = "B3";
(67,0)*{\bullet} = "C3"; 
(93,-0)*{\bullet} = "D3";
"B3"; "C3";**\dir{-}? >* \dir{>};
"C3"; "A3";**\dir{-}? >* \dir{>};
"B3"; "D3";**\dir{-}? >* \dir{>};
\end{xy}
$$
\caption{interlacing subdiagrams representing balanced regions of $B_2(L)$}
\label{subquad}
\end{figure}
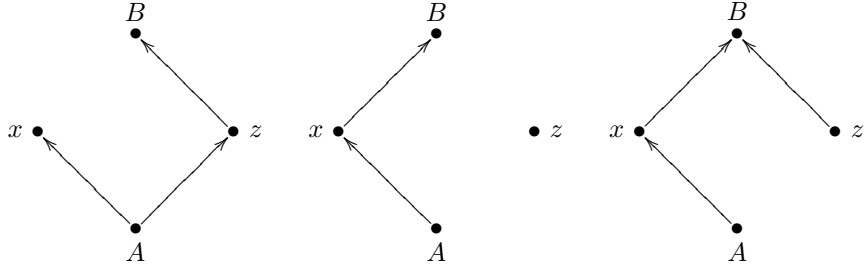

First we treat the polytope $Q(1).$  The semigroup defined by this polytope is generated
by six lattice points, $[1100], [1000], [0100], [0000], [0010], [1101],$ subject to one relation, 
$[11 00][0000] = [1000][0100].$  It follows that $Q(1)$ has a quadratic
square-free Gr\"obner basis with respect to the above term order. 

We now consider the polytopes corresponding to proper subdiagrams. Any such diagram has no loops, so it follows
that the corresponding balanced piece of $Q_i(L)$ can be represented, as in Proposition \ref{quadgrob}, as a fiber product
of simplices with term orders over unit intervals.  Furthermore, this fiber product can always be ordered in such a
way that the resulting term order agrees with the one defined above.   As a consequence of 
Proposition \ref{quadgrob}, each balanced piece of each $Q_i(L)$ has a quadratic, square-free
Gr\"obner basis with respect to the term order, and it follows that $I_{B_2(L)}$ has such a Gr\"obner basis as well. 
\end{proof}

Each of the above polytopes, $P_3(L)$ $B(L),$ $B_2(L),$ $P_3(s, r, L)$ and $P_3(r, L)$ come with distinguished
maps to the interval $[0, L]$ given by projecting onto the weight on a non-loop edge and dividing by $2.$  The polytope $P_3(L)$ has three such projections, $P_3(r, L),$ and $B_2(L)$ have two, and $B(L)$ and $P_3(s, r, L)$ each have one.  Each of these maps is given by forgetting coordinates off the edge, so they correspond to a coordinate projection.  If a monomial is balanced then it is easy to verify that any map which forgets coordinates gives another balanced monomial.  This implies that each of the edge projections above are morphisms in $\mathcal{P}$ to $([0, L], \Sigma^2).$   This allows us to prove Propositions \ref{polypres} and \ref{polyquad}.

\begin{proof}[Proof of Propositions \ref{polypres}, \ref{polyquad}]
In the first case, $P_{\Gamma}(\vec{r}, L)$ is a fiber product of balanced polytopes satisfying the stated conditions, therefore
$P_{\Gamma}(\vec{r}, L)$ inherits these conditions by Propositions \ref{flaggen}, \ref{flagrel}.

The second case is similar,  only we use Proposition \ref{quadgrob} to establish that $P_{\Gamma}(\vec{r}, L)$ inherits a quadratic, square-free
Gr\"obner basis. 
\end{proof}

\begin{figure}[htbp]
\centering
\includegraphics[scale = 0.45]{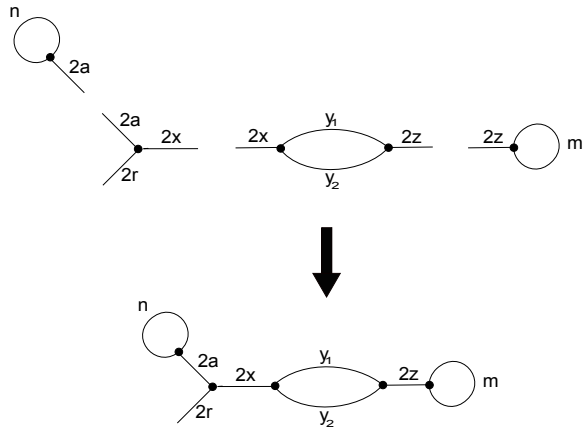}
\caption{Gluing building block polytopes with a fiber product.}
\label{assemble}
\end{figure}

\bigskip
\noindent
Christopher Manon:\\
Department of Mathematics,\\ 
University of California, Berkeley,\\ 
Berkeley, CA 94720-3840 USA,

\date{\today}

\end{document}